\documentclass[a4paper,10pt]{article}
\usepackage{amsmath,amssymb,amsthm}
\oddsidemargin=0pt
\evensidemargin=0pt
\topmargin=-7mm
\headsep=18pt
\textheight=250mm  
\textwidth=159.3mm
\pagestyle{plain} 
\theoremstyle{definition}
 \newtheorem{dfn}{Definition}[section]
 \newtheorem{remark}[dfn]{Remark}  
\theoremstyle{plain}
 \newtheorem{thm}[dfn]{Theorem}
 \newtheorem{prop}[dfn]{Proposition}
 \newtheorem{lem}[dfn]{Lemma}

\numberwithin{equation}{section}
\newcommand{\DV}{{\rm Div}\,}
\newcommand{\dv}{{\rm div}\,}

\newcommand{\BR}{{\mathbb R}}
\newcommand{\BC}{{\mathbb C}} 

\newcommand{\BN}{{\mathbb N}}

\newcommand{\BPi}{{\boldsymbol\Pi}}
\newcommand{\tg}{\tilde{g}}

\newcommand{\CA}{{\mathcal A}}
\newcommand{\CB}{{\mathcal B}}

\newcommand{\CD}{{\mathcal D}}
\newcommand{\CE}{{\mathcal E}}
\newcommand{\CF}{{\mathcal F}}

\newcommand{\CK}{{\mathcal K}}

\newcommand{\CN}{{\mathcal N}}
\newcommand{\CG}{{\mathcal G}}

\newcommand{\CR}{{\mathcal R}}

\newcommand{\CH}{{\mathcal H}}
\newcommand{\CO}{{\mathcal O}}

\newcommand{\CV}{{\mathcal V}}

\newcommand{\bA}{{\mathbf A}} 
\newcommand{\bB}{{\mathbf B}}

\newcommand{\bD}{{\mathbf D}}

\newcommand{\bK}{{\mathbf K}}
\newcommand{\bI}{{\mathbf I}}

\newcommand{\bG}{{\mathbf G}}

\newcommand{\bT}{{\mathbf T}}
\newcommand{\bH}{{\mathbf H}}

\newcommand{\bU}{{\mathbf U}}
\newcommand{\bV}{{\mathbf V}}
\newcommand{\bW}{{\mathbf W}}
\newcommand{\ba}{{\mathbf a}}
\newcommand{\bb}{{\mathbf b}}
\newcommand{\bd}{{\mathbf d}}
\newcommand{\bh}{{\mathbf h}}
\newcommand{\bk}{{\mathbf k}}

\newcommand{\bn}{{\mathbf n}}

\newcommand{\bw}{{\mathbf w}}

\newcommand{\bv}{{\mathbf v}}
\newcommand{\bu}{{\mathbf u}}
\newcommand{\bff}{{\mathbf f}}
\newcommand{\bg}{{\mathbf g}}

\newcommand{\fg}{{\frak g}}
\newcommand{\fp}{{\frak p}}
\newcommand{\fq}{{\frak q}}
\newcommand{\pd}{\partial}

\newcommand{\curl}{{\rm curl\,}}
\newcommand{\rot}{{\rm rot\,}}

\begin{document}
\title{\bf Local Well-posedness for the Magnetohydrodynamics 
in the different two liquids case}
\author{Elena Frolova\thanks{Department of Mathematics and 
Mechanics,
\endgraf St. Petersburg State Electrotechnical
University, Prof. Popova 5, 191126
St. Petersburg, Russia:
\endgraf
e-mail address: elenafr@mail.ru}
\enskip and \enskip Yoshihiro Shibata  
\thanks{Department of Mathematics,  Waseda University, 
\endgraf
Department of Mechanical Engineering and Materials Science,
University of Pittsburgh, USA \endgraf
mailing address: 
Department of Mathematics, Waseda University,
Ohkubo 3-4-1, Shinjuku-ku, Tokyo 169-8555, Japan. \endgraf
e-mail address: yshibata325@gmail.com \endgraf
Partially supported by Top Global University Project, 
JSPS Grant-in-aid for Scientific Research (A) 17H0109, and Toyota Central Research Institute Joint Research Fund }}
\date{}
\maketitle

\begin{abstract}
We prove the local well-posedness for a two phase problem of 
magnetohydrodynamics with a sharp interface.  The solution
is obtained in the maximal regularity space:
$H^1_p((0, T), L_q) \cap L_p((0, T), H^2_q)$ with 
$1 < p, q < \infty$ and $2/p + N/q < 1$, where $N$ is the 
space dimension. 
\end{abstract}
{\bf 2010 Mathematics Subject Classification}:  35K59, 76W05 \\
{\bf Keywords}: two phase problem, magnetohydorodynamics, local well-posedness, 
$L_p$-$L_q$ maximal regularity,

\section{Introduction}\label{sec:1} 
We consider a two phase problem govering the motion of 
two incompressible electrically conducting capillary liquids 
separated by a sharp interface. The problem is formulated as follows: 
Let $\Omega_+$ and $\Omega_-$ be two reference domains in 
 the $N$-dimensional Euclidean space
$\BR^N$ ($N \geq 2$).  Assume that the boundary of each 
$\Omega_\pm$ consists of two connected components $\Gamma$ and $S_\pm$, where
$\Gamma$ is the common boundary of $\Omega_\pm$.  Throughout the paper, we assume 
that $\Gamma$ is a compact hypersurface of $C^3$ class, that $S_\pm$ are
hypersurfaces of $C^2$ class,  and  that 
${\rm dist}\,(\Gamma, S_\pm) \geq d_\pm$ with some positive constants $d_\pm$, where
 the  ${\rm dist}(A, B)$ denotes the 
distance of any subsets $A$ and $B$ of $\BR^N$
 which is defined by setting ${\rm dist}(A, B)
= \inf\{|x-y| \mid x \in A, y \in B\}$.
Let $\Omega = \Omega_+ \cup \Gamma \cup \Omega_-$
and $\dot\Omega = \Omega_+ \cup \Omega_-$. The boundary of $\Omega$ 
	is $S_+\cup S_-$.  We may consider 
the case that one of $S_\pm$ is an empty set, or that both of $S_\pm$ are
empty sets. Let $\Gamma_t$ be an evolution of $\Gamma$ for time $t > 0$,
which is assumed to be given by
\begin{equation}\label{free-surface:1}
\Gamma_t = \{x = y + h(y, t)\bn(y) \mid y \in \Gamma\}
\end{equation}
with an unknown function $h(y, t)$.  We assume that $h|_{t=0} = h_0(y)$ is a
given function. Let $\Omega_{t \pm}$ are two connected components of 
$\Omega\setminus\Gamma_t$ such that the boundary of $\Omega_{t\pm}$
consists of $\Gamma_t$ and $S_\pm$.  
Let $\bn_t$ be the unit  outer normal to
$\Gamma_t$ oriented from $\Omega_{t+}$ into $\Omega_{t-}$, and 
let $\bn_\pm$ be  respective the unit outer normal to $S_\pm$. 
 Given any functions, $v_\pm$,  defined on $\Omega_{t\pm}$, $v$ is
defined by $v(x) = v_{\pm}(x)$ for $x \in \Omega_{t\pm}$ for $t \geq 0$,
where $\Omega_{0\pm} = \Omega_\pm$. Moreover, what
$v = v_\pm$ denotes that $v(x) = v_+(x)$ for $x \in \Omega_{t+}$ and 
$v(x) = v_{t-}(x)$ for $x \in \Omega_{t-}$.  Let
$$[[v]](x_0) = \lim_{x\to x_0 \atop x \in \Omega_{t+}}v_+(x)
- \lim_{x\to x_0 \atop x \in \Omega_{t-}}v_-(x)
$$
for every point $x_0 \in \Gamma_t$, which is the jump quantity of $v$ across
$\Gamma$. Let $\dot \Omega
= \Omega_+ \cup \Omega_-$ and
$\dot \Omega_t = \Omega_{t+} \cup \Omega_{t-}$.  
The purpose of this paper is to prove the local well-posedness of the  
magnetohydrodynamical equations with interface condition
 in the different two liquids case, which are formulated by the set of
 the following equations:
 \allowdisplaybreaks
\begin{alignat}2
\rho(\pd_t\bv + \bv\cdot\nabla\bv) - \DV(\bT(\bv, \fp) + \bT_M(\bH))  = 0,
\quad \dv\bv= 0&
&\quad &\text{in $\bigcup_{0 < t < T}\, \dot\Omega_t \times\{t\}$}, 
\nonumber \\
[[(\bT(\bv, \fp)+ \bT_M(\bH))\bn_t]] = \sigma H(\Gamma_t)\bn_t
-\fp_0\bn_t,
\quad [[\bv]]=0, \quad V_{\Gamma_t}  = \bv_+\cdot\bn_t& 
&\quad &\text{on $\bigcup_{0 < t < T}\, \Gamma_t \times\{t\}$}, 
\nonumber \\
\mu\pd_t\bH + \DV\{\alpha^{-1}\curl\bH - \mu
(\bv\otimes\bH - \bH\otimes\bv)\} = 0,
\quad \dv \bH = 0&
&\quad &\text{in $\bigcup_{0 < t < T}\, \dot\Omega_t \times\{t\}$}, \nonumber \\
[[\{\alpha^{-1}\curl\bH - \mu(\bv\otimes\bH - \bH\otimes\bv)\}
\bn_t]]  = 0&
&\quad &\text{on $\bigcup_{0 < t < T}\, \Gamma_t \times\{t\}$}, \nonumber  \\
[[\mu\bH\cdot\bn_t]]=0,
\quad [[\bH-<\bH, \bn_t>\bn_t]]=0&
&\quad &\text{on $\bigcup_{0 < t < T}\, \Gamma_t \times\{t\}$}, \nonumber \\
\bv_\pm=0, \quad \bn_\pm\cdot\bH_\pm =0, \quad
(\curl\bH_\pm)\bn_\pm = 0&
&\quad&\text{on $S_\pm\times(0, T)$}, \nonumber\\
(\bv, \bH, h)|_{t=0}  = (\bv_0, \bH_0, h_0)&
&\quad&\text{in $\dot\Omega\times \Gamma$}.\label{mhd.1}
\end{alignat}
Here, $\bv = \bv_{\pm} = (v_{\pm 1}(x, t), \ldots, v_{\pm N}(x, t))^\top$
are the velocity vector fields, where $M^\top$ stands for the 
transposed $M$, $\fp = \fp_\pm(x, t)$ the pressure fields, and 
$\bH = \bH_\pm = (H_{\pm 1}(x, t), \ldots, H_{\pm N}(x, t))^\top$
the magnetic fields.  The $\bv$, $\fp$, $\bH$ and $\Gamma_t$ are unknow,
while $\bv_0$, $\bH_0$ and $\rho_0$ are prescribed $N$-component vectors. 
As for the remaining symbols,  
$\bT(\bv, \fp) = \nu_\pm\bD(\bv_\pm)-\fp_\pm\bI$ is the viscous stress
tensor, $\bD(\bv_\pm) = \nabla \bv_\pm
+ (\nabla\bv_\pm)^\top$ is the doubled deformation tensor whose
$(i, j)$th component is $\pd_j v_{\pm i} + \pd_i v_{\pm j}$ with 
$\pd_i = \pd/\pd x_i$, $\bI$ the $N\times N$ unit matrix, 
$\bT_M(\bH) = \bT_M(\bH_\pm)= \mu_\pm(\bH_\pm\otimes\bH_\pm
- \frac12|\bH_\pm|^2\bI)$
the magnetic stress tensor, $\curl \bv = \curl \bv_\pm
= \nabla \bv_\pm - (\nabla \bv_\pm)^\top$ the doubled rotation
tensor whose $(i,j)$th component is $\pd_j v_{\pm i} - \pd_i v_{\pm j}$,
$V_{\Gamma_t}$ the velocity of the evolution of $\Gamma_t$ in the direction of 
$\bn_t$, which is given by $V_{\Gamma_t} = (\pd_t\rho)\bn\cdot\bn_t$ 
in the case of \eqref{free-surface:1}, and $\CH(\Gamma_t)$  $N-1$ fold  mean curvature of 
$\Gamma_t$ that is given by $\CH(\Gamma_t) \bn_t = \Delta_{\Gamma_t}x$
for $x
\in \Gamma_t$ with 
the Laplace Beltrami operator $\Delta_{\Gamma_t}$
on $\Gamma_t$, and $\fp_0$ the outside pressure. 
Moreover,  
$\rho = \rho_\pm$, $\mu=\mu_\pm$, $\nu=\nu_\pm$, and $\alpha = \alpha_\pm$, 
and $\rho_\pm$, $\mu_\pm$, $\nu_\pm$, and $\alpha_\pm$ are positive
constants describing respective the mass density, 
the magnetic permability,  the kinematic 
viscosity, and conductivity..  And, 
$\sigma$ is a positive constant describing the
coefficient of the surface tension.  Finally, for any
matrix field $\bK$ with $(i, j)$th component 
$K_{ij}$, the quantity $\DV K$ is an $N$-vector of functions
with the $i$th  component $\sum_{j=1}^N \pd_jK_{ij}$, 
and for any $N$-vectors of functions $\bu = (u_1, \ldots,
u_N)^\top$ and $\bw = (w_1,\ldots, w_N)^\top$,  
$\dv\bu = \sum_{j=1}^N\pd_ju_j$ and 
$\bu\cdot\nabla\bw$ is an $N$-vector of functions
with  the $i$th component  $\sum_{j=1}^N u_j\pd_jw_i$,
and $\bu\otimes\bw$ an $N\times N$ matrix with the $(i, j)$th component
 $u_iw_j$. We notice that  
\begin{equation}\label{lap:1}\begin{split}
&\Delta \bv = -\DV\curl \bv + \nabla\dv \bv, 
\quad 
\DV(\bv\otimes\bH- \bH\otimes\bv)
= \bv\dv\bH - \bH\dv \bv + \bH\cdot\nabla\bv - \bv\cdot\nabla\bH, \\
&\rot\times\rot \bH = \DV\curl\bH, \quad
\rot\times\bv\times\bH= \DV(\bv\otimes\bH - \bH\otimes\bv)  
\quad (\text{3 dimensional case}),
\end{split}\end{equation}
where $\times$ is the exterior product in the three dimensional case.
In particular, in the three dimensional case, the set of equations 
for the  magnetic field in Eq. \eqref{mhd.1} are written by
\begin{alignat*}
\mu\pd_t\bH + \rot\times(\alpha^{-1}\rot\bH - \mu\bv\times\bH)
=0, \quad\dv \bH = 0& &\quad &\text{in $\bigcup_{0 < t < T}\, \dot\Omega_t \times\{t\}$}, \nonumber \\ 
[[\bn_t\times \{\alpha^{-1}\rot\bH - \mu\bv\times\bH)\}]]  = 0, \quad 
[[\mu\bH\cdot\bn_t]]=0,
\quad [[\bH-<\bH, \bn_t>\bn_t]]=0&
&\quad &\text{on $\bigcup_{0 < t < T}\, \Gamma_t \times\{t\}$}. 
\end{alignat*}
This is a standard description, and so the set of equations for the magnetic field in Eq. \eqref{mhd.1} is 
the $N$-dimensional mathematical description for the magnetic equations with transmission 
conditions. 

In the equilibrium state, $\bv=0$, $\bH=0$, $\Gamma_t=\Gamma$, and $\fp$ is a constant state,
and so we assume that
\begin{equation}\label{initial:0}
\fp_0 = \sigma H(\Gamma).
\end{equation}

In Eq. \eqref{mhd.1}, there is one equation for the magnetic fields
$\bH_\pm$ too many, so that in the following instead of \eqref{mhd.1}, 
we consider the following equations:
\begin{alignat}2
\rho(\pd_t\bv + \bv\cdot\nabla\bv) - \DV(\bT(\bv, \fp) + \bT_M(\bH))  = 0,
\quad \dv\bv= 0&
&\quad &\text{in $\bigcup_{0 < t < T}\, \dot\Omega_t \times\{t\}$},
\nonumber \\
[[(\bT(\bv, \fp)+ \bT_M(\bH))\bn_t]] = \sigma H(\Gamma_t)\bn_t - \fp_0\bn_t,
\quad [[\bv]]=0, \quad V_{\Gamma_t}  = \bv_+\cdot\bn& 
&\quad &\text{on $\bigcup_{0 < t < T}\, \Gamma_t \times\{t\}$}, 
\nonumber\\
\mu\pd_t\bH - \alpha^{-1}\Delta\bH -\DV\mu
(\bv\otimes\bH - \bH\otimes\bv)  = 0&
&\quad &\text{in $\bigcup_{0 < t < T}\, \dot\Omega_t \times\{t\}$}, 
\nonumber\\
[[\{\alpha^{-1}\curl\bH - \mu(\bv\otimes\bH - \bH\otimes\bv)\}
\bn_t]]  = 0, \quad [[\mu\dv\bH]]  =0&
&\quad &\text{on $\bigcup_{0 < t < T}\, \Gamma_t \times\{t\}$}, 
\nonumber\\
[[\mu\bH\cdot\bn_t]]=0,
\quad [[\bH-<\bH, \bn_t>\bn_t]]=0&
&\quad &\text{on $\bigcup_{0 < t < T}\, \Gamma_t \times\{t\}$}, 
\nonumber\\
\bv_\pm=0,\quad \bn_0\cdot\bH_\pm =0, \quad
(\curl\bH_\pm)\bn_\pm = 0&
&\quad&\text{on $S_\pm\times(0, T)$}, 
\nonumber\\
(\bv, \bH)|_{t=0}  = (\bv_0, \bH_0)&
&\quad&\text{in $\dot\Omega$}.
\label{mhd.2}
\end{alignat}
Namely, two equations: $\dv \bH_\pm = 0$ in $\Omega_\pm$ is replaced with 
one boundary condition:
 $[[\mu\dv\bH]]=0$ on $\Gamma$. 
 Frolova and Shibata \cite{FS1} proved that in equations \eqref{mhd.2}
 if $\dv\bH= 0$ initially, then $\dv \bH=0$ in $\dot\Omega$ 
follows automatically for any $t > 0$ as long as solutions
exist.  Thus, the local wellposedness of equations \eqref{mhd.1} 
follows from that of equations \eqref{mhd.2} provided that
the initial  data $\bH_0$ satisfy the divergence condition:
$\dv \bH_0 = 0$, which is a compatibility condition. 
This paper devotes to proving the local wellposedness of equations
\eqref{mhd.2} in the maximal $L_p$-$L_q$ regularity framework
under the assumption that $\rho_0$ is small enough,
that is the interface $\Gamma_t$ is very close to the reference interface
$\Gamma$ initially.

 Since $\dot\Omega_t$ and $\Gamma_t$ are unknown,   
the set of equations in \eqref{mhd.2} is transformed to that of equations in 
$\dot\Omega$ and $\Gamma$ by the Hanzawa transform generated
by $\rho$ (cf. Subsect. \ref{sec:new2.0} below),   and then the main result are 
stated in Subsect. \ref{sec:new2.4} below. 

The equations of magnetodydrodynamics (MHD) can be found 
in \cite{C, LL}.  The solvability of MHD equations was first 
obtained by \cite{LS}. 
The free boundary problem for MHD was first studied
by Padula and Solonnikov \cite{PS} 
in the case where $\Omega_{+t}$ is a vacuume region in
the three dimensional Euclidean space $\BR^3$. They proved
the local well-posedness in the $L_2$ framework and 
used Sobolve-Slobodetskii spaces of fractional order. 
Later on, the global well-posedness was proved by
Froloba \cite{Frolova1} and Solonnikov and Frolova \cite{SE1}.
Moreover, the $L_p$ approach to the same problem was done
by Solonnikov \cite{Sol14-1, Sol14-2}.
 In \cite{PS}, by some technical reason,  it was 
required that regularity class of the fluid is slightly higher
	than that of the magnetic field (cf. \cite[p.331]{PS}).
	But, in this paper, we do not need this assumption, 
	that is we can solve the problem in the same regularity
	classes for the fluid and magnetic field. 
	The different point of this paper than in \cite{PS} appears
	in the iteration scheme (cf. \eqref{eq:4.1} and  \eqref{eq:4.2}).
	
	As a related topics,  in \cite{K1, K2} and references
	therein Kacprzyk proved the local and global 
	well-posedness of free boundary problem for
	the viscous non-homogeneous incompressible MHD
	in the case where an incompressible fluid is occupied 
	in a domain $\Omega_{-t}$ bounded by a free surface $\Gamma_t$ subjected to an
	electromagnetic field generated in a domain $\Omega_{+t}$
	exterior to $\Omega_{-t}$ by some currents located on a fixed
	 bounary $S_+$ of $\Omega_{+t}$. In \cite{K1, K2},  
	 it is assumed that $S_- =\emptyset$.
	 On the free surface, $\Gamma_t$, free boundary condition
	 without surface tension for the viscous fluid part and transmission 
	 conditions for electromagnetic fields part are imposed.  Since
	 the surface tension is not taken into account, the Lagrange
	 transformation was applied, and so the viscous fluid part has one
	 regularity higher than the electromagnetic fillds part.
	 An $L_2$ approach is applied and Sobolev-Slobodetskii
	 spaces of fractiona order are also used in \cite{K1, K2}. 
	
Finally, we explain some symbols used throughout the paper.
We denote the set of all natural numbers, real numbers, and
 complex numbers by  $\BN$, $\BR$, and $\BC$,
respectively.  Set $\BN_0 = \BN \cup\{0\}$.
For any multi-index $\kappa
= (\kappa_, \ldots, \kappa_N)$, $\kappa_j \in \BN_0$, we set $\pd_x^\kappa
= \pd_1^{\kappa_1}\cdots\pd_N^{\kappa_N}$ ,
$|\kappa| = \kappa_1 + \cdots+ \kappa_N$.
For scalar $f$, and $N$-vector of functions,
$\bg = (g_1, \ldots, g_N)$, we set $\nabla^n f
=(\pd_x^\kappa f\mid |\kappa|=n)$ and
$\nabla^n\bg = (\pd_x^\kappa g_j \mid |\kappa|=n, j=1, \ldots, N)$.
In particular, $\nabla^0f=f$, $\nabla^0\bg=\bg$, $\nabla^1f=\nabla f$,
and $\nabla^1\bg=\nabla\bg$.
For $1 \leq q \leq \infty$, $m \in \BN$, $s \in \BR$,
and any domain $D \subset \BR^N$, we denote by $L_q(D)$, $H^m_q(D)$, and
$B^s_{q,p}(D)$  the standard Lebesgue, Sobolev,
and Besov spaces, respectively, while 
 $\|\cdot\|_{L_q(D)}$, $\|\cdot\|_{H^m_q(D)}$,
and $\|\cdot\|_{B^s_{q,p}(D)}$ denote the norms of these spaces. 
We write $W^s_q(D) = B^s_{q,q}(D)$ and $H^0_q(D)=L_q(D)$.
What $f= f_\pm$ means that
$f(x) = f_\pm(x)$ for $x \in \Omega_\pm$. 
 For $\CH \in \{H^m_q, B^s_{q,p}\}$, 
the function spaces $\CH(\dot \Omega)$ ($\dot \Omega = \Omega_+ \cup \Omega_-$) 
and their norms   are  defined by setting
$$\CH(\dot \Omega) = \{f = f_\pm \mid f_\pm \in \CH(\Omega_\pm)\},
\quad \|f\|_{\CH(\dot D)} = \|f_+\|_{\CH(D_+)} +
\|f_-\|_{\CH(D_-)}. 
$$

For any Banach space $X$ with the
 norm $\|\cdot\|_X$,
$X^d$ denotes the $d$ product space defined by
$\{x = (x_1, \ldots, x_d) \mid x_i \in X\}$, while
 the norm of $X^d$ is simply written by $\|\cdot\|_X$, that is
$\|x\|_X = \sum_{j=1}^d\|x_j\|_X$.
For any time interval $(a, b)$,
$L_p((a, b), X)$ and $H^m_p((a, b), X)$ denote respective the standard
$X$-valued Lebesgue space and $X$-valued Sobolev space,
while $\|\cdot\|_{L_p((a, b), X)}$ and
$\|\cdot\|_{H^m_p((a, b), X)}$ denote their norms. 
Let $\CF$ and $\CF^{-1}$ be respective the Fourier transform
and the Fourier inverse transform. Let $H^s_p(\BR, X)$, $s>0$,
be the Bessel potential space of order $s$ defined
by
\begin{align*}
H^s_p(\BR, X) &= \{f \in L_p(\BR, X) \mid \|f\|_{H^s_p(\BR, X)}=
\|\CF^{-1}[(1+|\tau|^2)^{1/4}\CF[f](\tau)]\|_{L_p(\BR, X)} < \infty\}.
\end{align*}

For any $N$-vector of functions,
$\bu={}^\top(u_1, \ldots, u_N)$, sometimes $\nabla\bu$ is regarded as
an $N\times N$-matrix of functions whose $(i, j)$th component is
$\pd_ju_i$.
For any $m$-vector $V=(v_1, \ldots, v_m)$ and $n$-vector
$W=(w_1, \ldots, w_n)$, $V\otimes W$ denotes an $m\times n$ matrix 
whose $(i, j)$th component is $V_iW_j$.  For any 
$(mn\times N)$-matrix $A=(A_{ij, k} \mid i=1, \ldots, m, j=1, \ldots, n,
k=1, \ldots, N)$, 
$AV\otimes W$ denotes an $N$-column vector whose $k^{\rm th}$ component is
the quantity: $\sum_{j=1}^m\sum_{j=1}^n A_{ij, k}v_iw_j$.  Moreover, we define 
$AV\otimes W \otimes Z = (AV\otimes W)\otimes Z$. Inductively, we 
define $AV_1\otimes \cdots \otimes V_n$ by setting $AV_1\otimes \cdots \otimes V_n
=(AV_1\otimes\cdots\otimes V_{n-1})\otimes V_n$ for $n \geq 4$.

Let $\ba\cdot \bb =<\ba, \bb>= \sum_{j=1}^Na_jb_j$
for any $N$-vectors $\ba=(a_1, \ldots, a_N)$  and $\bb
=(b_1, \ldots, b_N)$.  
For any $N$-vector $\ba$, let $\ba_\tau
: = \ba - <\ba, \bn>\bn$.  For any two $N\times N$-matrices 
$\bA=(A_{ij})$ and $\bB=(B_{ij})$, the quantity $\bA:\bB$ is defined by
$\bA:\bB 
= \sum_{i,j=1}^NA_{ij}B_{ji}$. 
For any domain $G$ with boundary $\pd G$, 
we set
$$(\bu, \bv)_G = \int_G\bu(x)\cdot\overline{\bv(x)}\,dx, 
\quad (\bu, \bv)_{\pd G} = \int_{\pd G}\bu\cdot\overline{\bv(x)}\,d\sigma,
$$
where $\overline{\bv(x)}$ is the complex conjugate of $\bv(x)$ and 
$d\sigma$ denotes the surface element of $\pd G$. 
Given  
$1 < q < \infty$, let $q' = q/(q-1)$. Throughout the paper, the letter $C$ denotes
generic constants and   $C_{a,b, \cdots}$ 
the constant  which depends on $a$, $b$, $\cdots$.
 The values of constants $C$, $C_{a,b,\cdots}$
may be changed from line to line.

\section{Hanzawa transform and statment of main result}\label{sec:new2}

\subsection{Hanzawa transform}\label{sec:new2.0}
Let $\bn$ be the unit normal to $\Gamma$ oriented from $\Omega_+$ into $\Omega_-$.
Since $\Gamma_t$ is unknown, we assume that 
the $\Gamma_t$ is represented by \eqref{free-surface:1} 
Our task is to find not only $\bv$, $\fp$ and $\bH$ but also $h$.
We know  the existence of 
an $N$-vector, $\tilde\bn$, of $C^2$ functions  defined on 
$\BR^N$ such that 
\begin{equation}\label{norm:1}\begin{split}
&\bn = \tilde\bn \quad\text{on $\Gamma$}, \quad 
{\rm  supp}\, \tilde\bn \subset U_\Gamma, 
\quad 
\|\tilde \bn\|_{H^2_\infty(\BR^N)} \leq C
\end{split}\end{equation}
with some constant $C$, 
where we have set $U_\Gamma = \bigcup_{x_0 \in \Gamma} \{x \in \BR^N \mid |x-x_0| < \alpha\}$
with some small constant $\alpha > 0$. 
We will construct $\tilde\bn$ in Subsec \ref{sec:new2.2} below. 
We may assume that  
$${\rm dist}\,({\rm supp}\, \tilde\bn,  S_\pm) \geq 3d_\pm/4.$$
In the following we write $\dot\Omega = \Omega_+ \cup \Omega_-$ and 
$\Omega = \dot\Omega \cup\Gamma$. 
Let $H_h$ be an extension function of $h$ such that
$h = H_h$ on $\Gamma$.   In fact, we take $H_h$ as a solution of the 
harmonic equation:
\begin{equation}\label{eq:harmonic}
(-\Delta + \lambda_0)H_h = 0 \quad\text{in $\dot\Omega$},
\quad H_h|_\Gamma = h
\end{equation}
with some large positive number $\lambda_0$ which guarantees the unique solvability
of \eqref{eq:harmonic}.  In this case, if $h$ satisfies the regularity condition:
\begin{equation}\label{height:1}
h \in H^1_p((0, T), W^{2-1/q}_q(\Gamma)) \cap L_p((0, T), W^{3-1/q}_q(\Gamma)),
\end{equation}
then $H_h$ satisfies the regularity condition:
\begin{equation}\label{height:2}H_h \in H^1_p((0, T), H^2_q(\Omega)) \cap L_p((0, T), H^3_q(\Omega)),
\end{equation}
and possesses the estimate:
\begin{equation}\label{1.3.2}\begin{aligned}
\|\pd_t^iH_h\|_{L_p((0, T), H^{3-i}_q(\Omega))}
\leq C\|\pd_t^ih\|_{L_p((0, T), W^{3-i-1/q}_q(\Gamma))} \quad (i=0,1), \\
\|\pd_t^ih\|_{L_p((0, T), W^{3-1/q-i}_q(\Gamma))}
\leq C\|\pd_t^iH_h\|_{L_p((0, T), H^{3-i}_q(\Omega))} \quad (i=0,1)
\end{aligned}\end{equation}
for some constant $C > 0$. 

To transform Eq. \eqref{mhd.2}
to the equations on $\Omega$, we use  Hanazawa transformation
defined by 
\begin{equation}\label{hanzawa:1}
x = y + H_h(t, y)\tilde\bn(y) :=\Xi_h(t,y). 
\end{equation}
Let $\delta > 0$ be a small number such that 
\begin{equation}\label{hanzawa:2}
|\Xi_h(y_1, t) - \Xi_h(y_2, t)| \leq (1/2)|y_1-y_2|
\end{equation}
provided that
\begin{equation}\label{assump:0}
\sup_{0 < t < T}\|\bar\nabla H_h(\cdot, t)\|_{L_\infty(\Omega)} \leq \delta.
\end{equation}
Here and in the following, we write $\bar\nabla H_h = (\pd_x^\alpha H_h \mid |\alpha| \leq 1)
=(H_h, \nabla H_h)$.
From \eqref{hanzawa:1}, the map $x = \Xi(y, t)$ is 
injective.  And also, under suitable regularity condition on
$H_h$, for example, $H_h \in C^{1+\alpha}$ for each $t \in (0, T)$ with
some small $\alpha > 0$, the map $x = \Xi_h(y, t)$ becomes an open and 
closed map, so that $\{x = \Xi_h(y, t) \mid y \in \Omega\}
= \Omega$ because $x = \Xi_h(y, t)$ is an identity map on 
$\Omega\setminus U_\Gamma$.  We assume that the initial surface $\Gamma_0$ is 
given by 
$$\Gamma_0 = \{x = y + h_0(y)\bn \mid y \in \Gamma\}
$$
with a  given small function $h_0$.  Let $H_{h_0}$ be an extension of $h_0$
which is given by a unique solution of equation \eqref{eq:harmonic},
where $H_h$ and $h$ are replaced with $H_{h_0}$ and $h_0$, respectively. 

Let $\Xi_{h_0} = y + H_{h_0}\tilde\bn$, and set 
\begin{gather}
\bu(y, t) = \bv(\Xi^{-1}_h(y, t), t), \quad 
\fq(y, t) = \fp(\Xi_h^{-1}(y, t), t), \quad 
\bG(y, t) = \bH(\Xi_h^{-1}(y, t), t), \nonumber \\
\Gamma_t = \{x = \Xi_h(y, t) \mid y \in \Gamma\},
\quad
\Omega_{t\pm} = \{x = \Xi_h(y, t) \mid y \in \Omega_\pm\},
\nonumber \\
\bu_0(y) = \bv_0(\Xi^{-1}_{h_0}(y)), \quad 
\bG_0(y) = \bH_0(\Xi_{h_0}^{-1}(y)).
\label{change:1}
\end{gather}
Noting that $x = y$ near $S_\pm$, 
we have 
\begin{gather*}
\bu_\pm=0,\quad \bn_\pm\cdot\bG_\pm =0, \quad
(\curl\bG_\pm)\bn_\pm = 0
\quad\text{on $S_\pm\times(0, T)$}, \\
(\bu, \bG, h)|_{t=0}  = (\bv_0, \bH_0, h_0)
\quad\text{in $\dot\Omega\times \Gamma$}, \quad H_h|_{t=0}=h_0
\quad \text{on $\Gamma$}.
\end{gather*}

In what follows, we derive equations and interface conditions which
$\bu$, $\fq$ and $\bG$ satisfy
in $\Omega_{t\pm}$ and on $\Gamma_t$.

\subsection{Derivation of equations}\label{sec:new2.1}

In this subsection, we derive  equations obtained by Hanzawa transformation:
$x = y +H_h(y, t)\tilde\bn(y)$ from 
 the first, second and third equations in
Eq. \eqref{mhd.2}. 
  We assume that 
$H_h$ satisifies \eqref{assump:0}  
with small positive number $\delta > 0$.
We have  
$$\frac{\pd x}{\pd y} = \bI + \frac{\pd (H_h\tilde\bn)}{\pd y}
$$
and then, choosing $\delta > 0$ in \eqref{assump:0} small enough, we see that
there exists an $N\times N$ matrix, $V_0(\bK)$, of bounded real nalytic functions defined
on $U_\delta  = \{\bK \in \BR^{N+1} \mid 
|\bK| \leq \delta \}$ with $V_0(0) = 0$ such that 

\begin{equation}\label{trans:1}
\frac{\pd y}{\pd x} = \Bigl(\frac{\pd x}{\pd y}\Bigr)^{-1}
= \bI + \bV_0(\bar\nabla H_h).
\end{equation}
Here and in the following, $\bK = (\kappa_0,
\kappa_1, \ldots, \kappa_N)$ and $\kappa_0$, $\kappa_1, \ldots, \kappa_N$
are  independent variables corresponding to
$H_h$, $\pd H_h/\pd y_1, \ldots, \pd H_h/\pd y_N$, respectively. 
Let $V_{0ij}(\bK)$ be the $(i, j)$ th component of $V_0(\bK)$, and 
then by the chain rule  
\begin{equation}\label{trans:2}
\frac{\pd }{\pd x_j} = \sum_{k=1}^N (\delta_{jk} + V_{0jk}(\bK))
\frac{\pd}{\pd y_k},
\quad\nabla_x = (\bI + \bV_0(\bK))\nabla_y.
\end{equation}
Since $V_{0jk}(0) = 0$, we may write $V_{0jk}(\bK) = \tilde V_{0jk}(\bK)\bK$ with 
$$\tilde V_{0jk}(\bK)\bK= \int^1_0\frac{d}{d\theta}(V_{0jk}(\theta \bK))\,d\theta.$$
In particular, 
\begin{equation}\label{dc:1}\begin{split}
\curl_{ij}(\bv) &= \frac{\pd v_i}{\pd x_j} - \frac{\pd v_j}{\pd x_i}
= \curl_{ij}(\bu) + V_{Cij}(\bK)\nabla\bu, \\
D_{ij}(\bv) &= \frac{\pd v_i}{\pd x_j} + \frac{\pd v_j}{\pd x_i}
=D_{ij}(\bu) + V_{Dij}(\bK)\nabla\bu, \\
\end{split}\end{equation}
with
\begin{align*}
V_{Cij}(\bK)\nabla\bu &= \sum_{k=1}^N(V_{0jk}(\bK)\frac{\pd u_i}{\pd y_j}
- V_{0ik}(\bK)\frac{\pd u_j}{\pd y_k}), \\
V_{Dij}(\bK)\nabla\bu &= \sum_{k=1}^N(V_{0jk}(\bK)\frac{\pd u_i}{\pd y_j}
+ V_{0ik}(\bK)\frac{\pd u_j}{\pd y_k}).
\end{align*}
Here and in the following, for an $N\times N$ matrix $A$, $A_{ij}$ denotes
its $(i, j)$ th component and $(A_{ij})$ denotes an $N\times N$ matrix
whose $(i, j)$ th component is $A_{ij}$. 
To obtain the first equation in \eqref{mhd.3} in Subsec \ref{sec:new2.4} below, we make the 
pressure term linear.  From 
$\nabla\fp = (\bI + \bV_0(\bK))\nabla \fq$ it follows that  
$$\frac{\pd \fq}{\pd y_j} = \sum_{k=1}^N (\delta_{jk} + \frac{\pd x_k}{\pd y_j})
\frac{\pd \fp}{\pd x_k}.$$
Let $\tilde\bn = (\tilde n_1, \ldots \tilde n_N)^\top$, and then
\begin{equation}\label{time:1}\frac{\pd v_i}{\pd t} = \frac{\pd}{\pd t}u_i(y + H_h\tilde\bn, t)
= \frac{\pd u_i}{\pd t} + \sum_{j=1}^N\frac{\pd u_i}{\pd y_j}\frac{\pd H_h}{\pd t}\tilde n_j.
\end{equation}
Thus, 
the first equation in \eqref{mhd.2} is transformed to 
\begin{align*}
\frac{\pd \fq}{\pd y_m} 
&=-\rho\sum_{i=1}^N(\delta_{mi} + \frac{\pd x_m}{\pd y_i})
\{\frac{\pd u_i}{\pd t} + \sum_{j=1}^N\frac{\pd u_i}{\pd y_j}\frac{\pd H_h}{\pd t}\tilde n_j
+ \sum_{j,k=1}^N u_j(\delta_{jk} + V_{0jk}(\bar\nabla H_h))\frac{\pd u_i}{\pd y_k}\} \\
&+ 
\sum_{i,j,k=1}^N(\delta_{mi} + \frac{\pd x_m}{\pd y_i})
(\delta_{jk} + V_{0jk}(\bar\nabla H_h))
\frac{\pd}{\pd y_k}\{\nu(D_{ij}(\bu) 
+ V_{Dij}(\bar\nabla H_h)\nabla\bu) + 
T_{Mij}(\bG)\}\\
&= -\rho\pd_t u_m + \sum_{k=1}^N \frac{\pd}{\pd y_k}(\nu D_{mk}(\bu))
+ f_{1m}(\bu, \bG, H_h)
\end{align*}
with
\begin{equation}\label{f:1}\begin{aligned}
&f_{1m}(\bu, \bG, H_h) = -\rho\Bigl(\sum_{j=1}^N\frac{\pd u_m}{\pd y_j}
\frac{\pd H_h}{\pd t}\tilde n_j + \sum_{j,k=1}^Nu_j(\delta_{jk} + V_{0jk}(\bar\nabla H_h))
\frac{\pd u_m}{\pd y_k}\Bigr) \\
&\quad - \rho\sum_{i=1}^N\frac{\pd(H_h\tilde n_m)}{\pd y_i}
\Bigl(\frac{\pd u_i}{\pd t}
+ \sum_{j=1}^N\frac{\pd u_i}{\pd y_j}
\frac{\pd H_h}{\pd t}\tilde n_j + \sum_{j,k=1}^Nu_j(\delta_{jk} + V_{0jk}(\bar\nabla H_h))
\frac{\pd u_i}{\pd y_k}\Bigr)\\
&\quad + \sum_{j,k=1}^N(\delta_{jk} + V_{0jk}(\bar\nabla H_h))
\frac{\pd}{\pd y_k}(\nu(D_{mj}(\bu)
+ V_{D_{mj}}(\bar\nabla H_h)\nabla\bu) + T_{Mmj}(\bG)) \\
&\quad +\sum_{i,j,k=1}^N\frac{\pd(H_h\tilde n_m)}{\pd y_i}
(\delta_{jk} + V_{0jk}(\bar\nabla H_h))
\frac{\pd}{\pd y_k}(\nu(D_{ij}(\bu)
+ V_{D_{ij}}(\bar\nabla H_h)\nabla\bu) + T_{Mij}(\bG))
\end{aligned}\end{equation} 
Thus, setting $\bff_1(\bu, \bG, H_h) = (f_{11}(\bu, \bG, H_h), 
\ldots, f_{1N}(\bu, \bG, H_h))^\top$, we have
\begin{equation}\label{maineq:1}
\rho\pd_t\bu - \DV\bT(\bu, \fq) = \bff_1(\bu, \bG, H_h)
\quad\text{in $\dot\Omega \times(0, T)$}.
\end{equation}
Since $V_{0jk}(0) = 0$ and $V_{\bD ij}(0) = 0$, we may write
\begin{equation}\label{maineq:1*}\begin{aligned}
\bff_1(\bu, \bG, H_h) & = f^1_0\bar\nabla H_h\otimes\pd_t\bu + \CF^1_0(\bar\nabla H_h)
\pd_tH_h\otimes\nabla\bu + \CF^1_1(\bar\nabla H_h)\bu\otimes\nabla\bu\\
&+ \CF^1_2(\bar\nabla H_h)\bar\nabla H_h \otimes\nabla^2\bu
+ \CF^1_3(\bar\nabla H_h)\bar\nabla^2 H_h\otimes\nabla \bu
 + \CF^1_4(\bar\nabla H_h)\bG\otimes\nabla\bG
\end{aligned}\end{equation}
where $f^1_0$ is a bounded function and $\CF^1_j(\bK)$ are some matrices
of bounded analytic
functions defined on $U_\delta$.  Here and in the following, we write
$\bar\nabla^k H_h = (\pd^\alpha_y H_h \mid |\alpha| \leq k)$ for $k \geq 2$
and $\bar\nabla H_h = (\pd^\alpha_y H_h \mid |\alpha| \leq 1)$. 

We next consider the divergence free condition: $\dv\bv=0$. 
By \eqref{trans:2}, 
\begin{equation}\label{div:1}
\dv\bv = \sum_{j=1}^N\frac{\pd v_j}{\pd x_j}
= \sum_{j,k=1}^N(\delta_{jk} + V_{0jk}(\bar\nabla H_h))\frac{\pd u_{ j}}{\pd y_k}.
\end{equation} 
Let $J = \det(\pd x/\pd y)$ and then, choosing $\delta > 0$ small enough
in \eqref{assump:0},  we can write
\begin{equation}\label{jacob:1}
J = 1 + J_0(\bar\nabla H_h), 
\end{equation}
where $J_0(\bK)$ is a real analytic functions
defined on $U_\delta$ such that $J_0(0) = 0$. Using this symbol, we have
\begin{align*}
(\dv_x\bv_\pm, \varphi)_{\Omega_{t\pm}}
&= -(\bv_\pm, \nabla_x\varphi)_{\Omega_{t\pm}}
= -\sum_{j=1}^N(Ju_{\pm,j}, \sum_{k=1}^N(\delta_{jk} 
+ V_{0jk}(\bar\nabla H_h))\frac{\pd\varphi}{\pd y_k})_{\Omega_\pm}
\\
&= (\sum_{j,k=1}^N\frac{\pd}{\pd y_k}\{J(\delta_{jk} + 
V_{0jk}(\bar\nabla H_h))u_{\pm j}\},\varphi)_{\Omega_\pm},
\end{align*}
so that
\begin{equation}\label{div:2}
\dv \bv_\pm = J^{-1}\sum_{j,k=1}^N\frac{\pd}{\pd y_k}\{J(\delta_{jk} + 
V_{0jk}(\bar\nabla H_h))u_{\pm j}\}.
\end{equation}
Combining \eqref{div:1}, \eqref{div:2}, and \eqref{jacob:1}  yields that
\begin{equation}\label{div:3}
\dv \bu  = g(\bu, H_h) = \dv \bg(\bu, H_h)
\quad\text{in $\dot\Omega\times(0, T)$}
\end{equation}
with 
\begin{equation}\label{g:1}\begin{split}
g(\bu, H_h) &= \sum_{j,k=1}^N V_{0jk}(\bar\nabla H_h)
\frac{\pd u_{\pm j}}{\pd y_k} + 
J_0(\bar\nabla H_h)\{\dv\bu + \sum_{j,k=1}^N V_{0jk}(\bar\nabla H_h)
\frac{\pd u_{j}}{\pd y_k}\}, \\
\bg(\bu, H_h)|_k &= \sum_{j=1}^NV_{0jk}(\bar\nabla H_h)u_{ j}
+ J_0(\bar\nabla H_h)\sum_{j=1}^N(\delta_{jk} + V_{0jk}(\bar\nabla H_h))u_{ j}.
\end{split}\end{equation}
Since $V_{0jk}(0) = J_0(0) = 0$, we may write
\begin{equation}\label{div:3*}
g(\bu, H_h) = \CG_1(\bar\nabla H_h) \bar\nabla H_h \otimes \nabla \bu, 
\quad \bg(\bu, H_h) = \CG_2(\bar\nabla H_h) \bar\nabla H_h \otimes \bu, 
\end{equation}
where $\CG_i(\bK)$ are some matrices of bounded analytic functions defined on $U_\delta$. 

We next consider the third  equation in Eq. \eqref{mhd.2}.
By \eqref{time:1}, 
$$\mu\pd_t\bH = \mu\pd_t\bG + \mu \sum_{j,k=1}^N
\tilde n_j\frac{\pd \bG}{\pd y_j}\frac{\pd H_h}{\pd t}.
$$
Moreover, 
\begin{align*}
\Delta &
= \sum_{j=1}^N(\sum_{k=1}^N(\delta_{jk}+ V_{0jk}(\bar\nabla H_h))\frac{\pd}{\pd y_k})
(\sum_{\ell=1}^N(\delta_{j\ell} + V_{0j\ell}(\bar\nabla H_h))\frac{\pd}{\pd y_\ell})\\
& = \sum_{j=1}^N\{\frac{\pd^2}{\pd y_j^2}
+\sum_{\ell=1}^N\frac{\pd}{\pd y_j}(V_{0j\ell}(\bar\nabla H_h)\frac{\pd}{\pd y_\ell})
+ \sum_{\ell, k=1}^NV_{0jk}(\bar\nabla H_h)\frac{\pd}{\pd y_k}
((\delta_{j\ell} + V_{0j\ell}(\bar\nabla H_h))\frac{\pd}{\pd y_\ell})\\
& = \Delta + V_{\Delta 2}(\bar\nabla H_h)\nabla^2 + V_{\Delta 1}(\bar\nabla H_h)\nabla
\end{align*}
with
\begin{align*}
V_{\Delta 2}(\bar\nabla H_h)\nabla^2 &= 2\sum_{j,k=1}^NV_{0jk}(\bar\nabla H_h)
\frac{\pd^2}{\pd y_j\pd y_k}
+ \sum_{j,k,\ell=1}^N V_{0jk}(\bar\nabla H_h)V_{0j\ell}(\bar\nabla H_h)
\frac{\pd^2}{\pd y_k\pd y_\ell}, \\
V_{\Delta 1}(\bar\nabla H_h)\nabla &= \sum_{j,k=1}^N \frac{\pd V_{0j\ell}(\bar\nabla H_h)}
{\pd y_j}\frac{\pd}{\pd y_k}
+ \sum_{j,k,\ell=1}^NV_{0jk}(\bar\nabla H_h)\frac{\pd V_{0j\ell}(\bar\nabla H_h)}{\pd y_k}
\frac{\pd}{\pd y_\ell}.
\end{align*}
Thus, setting 
\begin{equation}\label{g:1*}\begin{split}
\bff_2(\bu, \bG, H_h)
 &=- \mu\sum_{j,k=1}^N\tilde n_j
\frac{\pd\bG}{\pd y_j}\frac{\pd H_h}{\pd t} 
 + \alpha^{-1}V_{\Delta 2}(\bar\nabla H_h)\nabla^2\bG_m
 + \alpha^{-1}V_{\Delta 1}(\bar\nabla H_h)\nabla \bG\\
& + \mu\sum_{j,k=1}^N(\delta_{jk} + V_{0jk}(\bK))
\frac{\pd}{\pd y_k}(\bu\otimes\bG - \bG\otimes\bu), 
\end{split}\end{equation}
we have 
\begin{equation}\label{maineq:2}
\mu\pd_t\bG - \alpha^{-1}\Delta\bG = \bff_2(\bu, \bG, H_h)
\quad\text{in $\dot\Omega\times(0, T)$}.
\end{equation}
Since $V_{0jk}(0) = 0$,  we may write
\begin{equation}\label{maineq:2*}\begin{aligned}
\bff_2(\bu, \bG, H_h) & = f_2\nabla\bG\otimes\pd_tH_h + \CF^2_1(\bar\nabla H_h)\bar\nabla H_h\otimes\nabla^2\bG
+ \CF^2_2(\bar\nabla H_h)\bar\nabla^2 H_h \otimes \nabla \bG \\
&+ \CF^2_3(\bar\nabla H_h)\nabla \bu\otimes \bG + \CF^2_4(\bar\nabla H_h)\bu \otimes \nabla \bG.
\end{aligned}\end{equation}
where $f_2$ is a bounded function and $\CF^2_j(\bK)$ are some matrices of bounded
analytic functions
defined on $U_\delta$.

\subsection{The unit outer normal  and 
the Laplace Beltrami operator on $\Gamma_t$}
\label{sec:new2.2}

Since $\Gamma$ is a compact hypersurface of $C^3$ class, we have the following
proposition. 
\begin{prop}\label{domain} 
For any  constant  $M_1 \in (0, 1)$, there exist a finite number $n \in \BN$, 
constants $M_2>0$, $d, d'\in (0, 1)$,  $n$ $N$-vectors of functions 
$\Phi^\ell\in C^3(\BR^N)^N$, $n$ points 
$x^\ell \in \Gamma$ and two domains $\CO_\pm$
such that the following assertions hold:
\begin{itemize}
\item[\thetag{i}]~The maps: $\BR^N \ni x \mapsto \Phi^\ell(x) \in \BR^N$
are bijective for $j \in \BN$. 
\item[\thetag{ii}]~$\Omega =
(\bigcup_{\ell=1}^n \Phi^\ell(B_d))\cup \CO_+ \cup \CO_- $,  $B_{d'}(x^\ell)
\subset \Phi^\ell(B_d)  
\subset \Omega$,
$B_{d'}(x^\ell) \cap \Omega_\pm \subset \Phi^\ell(B_d \cap \BR^N_\pm) \subset \Omega_\pm$ and 
$\Gamma \cap B_{d'}(x^\ell) \subset \Phi^\ell(B_d \cap \BR_0^N) $, where 
$B_d=\{x \in \BR^N \mid |x| < d\}$, 
$B_{d'}(x^\ell) = \{x  \in \BR^N \mid |x-x^\ell| < d'\}$, 
$\BR^N_\pm = \{x = (x_1, \ldots, x_N) \mid \pm x_N > 0\}$, and 
$\BR^N_0 = \{x = (x_1, \ldots, x_N) \in \BR^N \mid x_N=0\}$. 
\item[\thetag{iii}]~There exist $n$ $C^\infty$ functions $\zeta^\ell $
such that ${\rm supp}\, \zeta^\ell \subset B_{d'}(x^\ell)$ 
and $\sum_{\ell=1}^n \zeta^\ell = 1$ on $\Gamma$. 
\item[\thetag{iv}]~$\nabla\Phi^\ell = \CA^\ell+ B^\ell$, 
$\nabla(\Phi^\ell)^{-1}=\CA^{\ell,-1}
+ B^{\ell,-1}$ 
where $\CA^\ell$ are $N\times N$ constant 
orthogonal matrices and $B^\ell$
are $N\times N$ matrices of 
$C^3(\BR^N)$ functions satisfying
the conditions:
$
\|B_\ell\|_{L_\infty(\BR^N)} \leq M_1$ and 
$\|\nabla B_\ell\|_{H^1_\infty(\BR^N)} \leq M_2$ 
for  $\ell = 1, \ldots, n$.
\end{itemize}
\end{prop}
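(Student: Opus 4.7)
The plan is to obtain the charts $\Phi^\ell$ by local rectification of $\Gamma$, then extend each local parametrization to a global $C^3$ bijection of $\BR^N$ by gluing with an affine map through a cutoff, and finally extract a finite subcover by compactness. Since $\Gamma$ is a compact $C^3$ hypersurface separating $\Omega_+$ from $\Omega_-$, at every point $x_0 \in \Gamma$ there is a $C^3$ local diffeomorphism $\Psi$ from a neighborhood of $0$ onto a neighborhood $U$ of $x_0$ in $\BR^N$ sending $\BR^N_0$, $\BR^N_+$, $\BR^N_-$ to $\Gamma \cap U$, $\Omega_+ \cap U$, $\Omega_- \cap U$ respectively. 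After precomposing $\Psi$ with a translation so that $\Psi(0)=x_0$ and with an orthogonal matrix that diagonalizes $\nabla\Psi(0)$'s polar factor, we may assume $\nabla \Psi(0) = \CA$ for some constant $N\times N$ orthogonal matrix $\CA$.

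Fix $M_1 \in (0,1)$. Since $\nabla\Psi$ is continuous (in fact $C^2$), we may choose $d > 0$ so small that $\|\nabla\Psi - \CA\|_{L_\infty(B_{2d})} \leq M_1/2$. Choose a smooth cutoff $\chi \in C^\infty_0(B_{2d})$ with $\chi \equiv 1$ on $B_d$ and $\|\nabla \chi\|_{L_\infty} \leq C/d$, and define the extended chart
\begin{equation*}
\Phi(x) = x_0 + \CA x + \chi(x)\bigl(\Psi(x) - x_0 - \CA x\bigr), \qquad x \in \BR^N.
\end{equation*}
Then $\Phi \in C^3(\BR^N)^N$ coincides with $\Psi$ on $B_d$ and with the affine map $x \mapsto x_0 + \CA x$ outside $B_{2d}$. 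By construction $B := \nabla\Phi - \CA$ satisfies $\|B\|_{L_\infty(\BR^N)} \leq M_1$ (shrinking $d$ further if needed to absorb the $\chi$--cross terms). Consequently $\nabla\Phi = \CA(\bI + \CA^{-1} B)$ with $\|\CA^{-1}B\|_{L_\infty} \leq M_1 < 1$, so a standard application of Hadamard's global inverse function theorem (or a Banach fixed-point argument: $\Phi$ is a small $C^1$-perturbation of an affine bijection) shows that $\Phi: \BR^N \to \BR^N$ is a $C^3$ diffeomorphism, proving (i). The bound $\|\nabla B\|_{H^1_\infty(\BR^N)} \leq M_2$ follows from the $C^3$ regularity of $\Gamma$ together with the fixed cutoff $\chi$; this defines $M_2$. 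Applying the Neumann series to $(\bI + \CA^{-1}B)^{-1}$ and pulling back via $\Phi^{-1}$ yields the analogous decomposition $\nabla\Phi^{-1} = \CA^{-1} + B^{-1}$ with the same type of bounds, which gives the last part of (iv).

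Now apply this construction at each $x_0 \in \Gamma$. Choose $d' \in (0, d)$ so small that for the resulting chart $\Phi$ one has $B_{d'}(x_0) \subset \Phi(B_d)$ and in fact $B_{d'}(x_0) \cap \Omega_\pm \subset \Phi(B_d \cap \BR^N_\pm)$ and $B_{d'}(x_0) \cap \Gamma \subset \Phi(B_d \cap \BR^N_0)$; such a $d'$ exists because $\Phi$ is a diffeomorphism carrying $0$ to $x_0$ and sending the half-spaces to the correct sides of $\Gamma$. By compactness of $\Gamma$, extract a finite subcover $\{B_{d'}(x^\ell)\}_{\ell=1}^n$ with associated charts $\Phi^\ell$ and orthogonal matrices $\CA^\ell$, with a single pair $(d, d')$ and a single bound $M_2$ working for all $\ell$ (take minima and maxima over the finite family). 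The partition of unity $\zeta^\ell$ in (iii) is obtained by the standard procedure: take smooth bumps $\tilde\zeta^\ell \in C^\infty_0(B_{d'}(x^\ell))$ with $\tilde\zeta^\ell \equiv 1$ on $B_{d'/2}(x^\ell)$ covering $\Gamma$, then normalize on $\Gamma$ to obtain $\zeta^\ell$.

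Finally, for (ii), the compact set $\Gamma$ is already contained in $\bigcup_\ell \Phi^\ell(B_d)$, and the set $K := \bigcup_\ell \Phi^\ell(\overline{B_{d/2}})$ is a compact neighborhood of $\Gamma$ in $\Omega$. Define $\CO_\pm$ to be open sets with $\CO_\pm \subset \Omega_\pm \setminus K$ satisfying $\Omega_\pm = (\Omega_\pm \cap \bigcup_\ell \Phi^\ell(B_d)) \cup \CO_\pm$; concretely $\CO_\pm = \Omega_\pm \setminus \bigcup_\ell \Phi^\ell(\overline{B_{d/2}})$ works. The main technical obstacle is the simultaneous realization of bijectivity on all of $\BR^N$ and smallness of $B^\ell$ in $L_\infty$: this forces a careful choice of $d$ relative to the cutoff $\chi$, since the $\chi$--cross terms in $\nabla\Phi - \CA$ involve $\nabla\chi \cdot (\Psi - x_0 - \CA x)$, whose $L_\infty$-norm is of order $d^{-1} \cdot d^2 \cdot \|\nabla^2 \Psi\|_{L_\infty} = O(d)$, so choosing $d$ small is what makes the argument work.
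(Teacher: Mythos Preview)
The paper does not supply a proof of this proposition; it is stated immediately after the sentence ``Since $\Gamma$ is a compact hypersurface of $C^3$ class, we have the following proposition'' and is then used as a black box. So there is nothing to compare your argument against line by line.

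Your construction is the standard one and is essentially correct: locally rectify $\Gamma$ as a $C^3$ graph over its tangent plane, glue the local chart to the affine map $x\mapsto x_0+\CA x$ via a cutoff to obtain a global $C^3$ diffeomorphism of $\BR^N$, and then use compactness to pass to a finite family. Two small points are worth tightening. First, the polar-decomposition remark is unnecessary and slightly misleading: in the graph parametrization $\Psi(u',u_N)=x_0+\CA(u',h(u')+u_N)^\top$ with $h(0)=0$, $\nabla'h(0)=0$, one already has $\nabla\Psi(0)=\CA$ orthogonal, so no further precomposition is needed (and precomposing by a non-orthogonal matrix could spoil the mapping of $\BR^N_0$ to $\Gamma$). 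Second, the order in which you choose $d$, $d'$ and then ``take minima over the finite family'' is a bit circular as written: shrinking $d$ after the finite subcover has been extracted may shrink $\Phi^\ell(B_d)$ below $B_{d'}(x^\ell)$ and destroy the covering. The clean fix is to use compactness of $\Gamma$ and the uniform $C^3$ bounds to choose a single $d$ (and hence a single cutoff $\chi$) that works at every point \emph{before} extracting the finite subcover; then a uniform $d'$ with $B_{d'}(x^\ell)\subset\Phi^\ell(B_d)$ follows from the uniform lower bound on $\det\nabla\Phi^\ell$.
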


In what follows, we write  $B_{d'}(x^\ell)$ simply by  $B^\ell$, and set 
$V_0 = B_d \cap \BR^N_0$. In what follows, the index $\ell$ runs from $1$ through $n$.  
Recall that $\Gamma \cap B^\ell \subset \Phi^\ell(V_0)$,
$\sum_{\ell=1}^n \zeta^\ell=1$ on $\Gamma$,  and
${\rm supp}\, \zeta^\ell \subset B^\ell \subset \Phi^\ell(B_d)  \subset \Omega$.  Let 
$$\tau_j(u) = \frac{\pd \Phi^\ell(u)}{\pd u_j} = A^\ell_j + \CB^\ell_j(u)$$
for $j = 1, \ldots, N$ and $u=(u_1, \ldots, u_N) \in \BR^N$.  
By Proposition \ref{domain}, $A^\ell_j$ are $N$-constan vectors and $\CB^\ell_j(u)$
are $N$ vector of functions such that 
\begin{equation}\label{norm:3} A^\ell_j\cdot A^\ell_k = \delta_{jk}, \quad
\|\CB^\ell_j\|_{L_\infty(\BR^N)} \leq M_1, \quad
\|\nabla \CB^\ell_j\|_{H^1_\infty(\BR^N)} \leq M_2
\end{equation}
where $\delta_{jk}$ are the Kronecker delta symbols defined by
$\delta_{jj} = 1$ and $\delta_{jk} = 0$ for $j\not=k$. 
Notice that $\{\tau_j(u', 0)\}_{j=1}^{N-1}$, $u'=(u_1, \ldots, u_{N-1},0)
\in V_0$,  forms a basis of the tangent space
of $\Gamma \cap B^\ell$.   Let $g^\ell_{ij}(u) = 
\tau^\ell_i(u)\cdot\tau^\ell_j(u)$, $G^\ell(u)$  an $N\times N$ matrix
whose $(i,j)$ th component is $g^\ell_{ij}(u)$,  $g^\ell(u) = \sqrt{\det G^\ell(u))}$, 
and $g_\ell^{ij}(u)$ the $(i,j)$ th component of $(G^\ell)^{-1}$, respectively. 
$G^\ell(u',0)$ is a first fundamental matrix of the tangent space of 
$\Gamma \cap B^\ell$. By \eqref{norm:3} there exist functions 
$\tg^\ell(u)$, $\tg^\ell_{ij}(u)$ and 
$\tg_\ell^{ij}(u)$ such that 
\begin{gather}
g^\ell_{ij}(u) = \delta_{ij} +  \tg^\ell_{ij}(u), \quad 
g^\ell(u) = 1 + \tg^\ell(u), \quad g_\ell^{ij}(u) = \delta_{ij} + \tg_\ell^{ij}(u), 
\nonumber \\
\|(\tg^\ell_{ij}, \tg^\ell, \tg_\ell^{ij})\|_{L_\infty(\BR^N)} \leq CM_1,
\quad
\|\nabla(\tg^\ell_{ij}, \tg^\ell, \tg_\ell^{ij})\|_{H^1_\infty(\BR^N)} \leq C_{M_2}.
\label{norm:5}
\end{gather}
Here and in the following the constant $C_{M_2}$ is a generic constant 
depending on $M_2$. Here and in the following,
we may assume that $0 < M_1 < 1  \leq M_2$. 

We now define an extension of $\bn$ to $\BR^N$ satisfying \eqref{norm:1}. 
Let $\varphi_{i, j}(u) = \pd \Phi^\ell_i(u)/\pd u_j$ with $\Phi^\ell = (\Phi^\ell_1,
\ldots, \Phi^\ell_N)^\top$, and let $\CN^\ell_i(u)$ be an $N\times(N-1)$ 
be functions defined by setting 
$$\CN^\ell_i(u) = (-1)^{i+N}
\det\left(\begin{matrix} \varphi_{1,1} & \cdots & \varphi_{1,N-1} \\
\vdots & \ddots &\vdots \\
\varphi_{i-1,1} & \cdots &\varphi_{i-1, N-1} \\
\varphi_{i+1, 1} & \cdots& \varphi_{i+1, N-1} \\
\vdots & \ddots & \vdots \\
\varphi_{N, 1} & \cdots & \varphi_{N, N-1}
\end{matrix}\right)
$$
for $i=1 = 1, \ldots, N-1$, and set $\CN^\ell = (\CN^\ell_1, \ldots,
\CN^\ell_N)^\top$. Then, we have
$$<\CN^\ell, \frac{\pd \Phi^\ell}{\pd u_k}> = 0
\quad\text{for $k=1, \ldots, N-1$},
$$
because
$$0 = \det\left(\begin{matrix} \varphi_{1,1} & \cdots & \varphi_{1, N-1} & \varphi_{1,k}\\
\vdots & \ddots & \vdots & \vdots \\
\varphi_{N1} & \cdots & \varphi_{N,N-1} & \varphi_{N, k}
\end{matrix}\right)
= \sum_{j=1}^N\CN^\ell_j\varphi_{j,k} = 
<\CN^\ell, \frac{\pd \Phi^\ell}{\pd u_k}>
$$
for $k=1, \ldots, N-1$.  Let $\tilde\bn^\ell = \CN^\ell/|\CN^\ell|$, and then
\begin{gather}
<\tilde\bn^\ell, \tau^\ell_j(u)>=0 \quad\text{ for $j=1\ldots N-1$ and $u \in \BR^N$}.
\nonumber \\
\tilde \bn^\ell\circ(\Phi^\ell)^{-1} = \bn
\quad\text{on $\Gamma \cap B^\ell$}.
\label{normal:1}\end{gather}
Moreover, by \eqref{norm:3} $\|\nabla\tilde \bn^\ell\|_{H^1_\infty(\BR^N)}
\leq C_{M_2}$.  for some constant $C_{M_2}$ depending on $M_2$.  Let 
$$\tilde \bn = \sum_{\ell=1}^n \zeta^\ell \tilde\bn^\ell\circ(\Phi^\ell)^{-1},
$$
and then $\tilde \bn$ satisfies the properties given in \eqref{norm:1}.

Next, we give a representation formula of $\bn_t$.  Since 
$\Gamma_t \cap B^\ell$ is represented by 
$x = \Phi^\ell(u', 0) + H_h(\Phi^\ell(u', 0), t)\bn(\Phi^\ell(u', 0))$
for $(u', 0) \in V_0$,  setting $\tilde H^\ell_h = H_h(\Phi^\ell(u), t)$, 
we define $\tau^\ell_t = 
(\tau^\ell_{t1}(u), \ldots, \tau^\ell_{t N-1})^\top$ by 
\begin{equation}\label{basis:1}
\tau^\ell_{tj}(u) = \frac{\pd}{\pd u_j}(\Phi^\ell(u) + \tilde H_h^\ell(u, t) \tilde\bn^\ell(u)).
\end{equation}
Notice that $\{\tau^\ell_{tj}(u',0)\}_{j=1}^{N-1}$ forms a basis of the tangent space
$\Gamma_t$ locally. 
To obtain a formula of $\tilde\bn_t$, we set 
$\tilde\bn^\ell_t = a(\tilde\bn^\ell + \sum_{j=1}^{N-1}\tau^\ell_jb_j)$ and we decide
$a$ and $b_j$ in such a way that  $|\tilde\bn^\ell_t| = 1$ and $<\tilde\bn^\ell_t,  \tau^\ell_t>=0$.
From  $|\tilde\bn^\ell_t|^2=1$ it follows that 
$$1 = a^2(\tilde\bn^\ell + \sum_{j=1}^{N-1}b_j\tau^\ell_j)
\cdot(\tilde\bn^\ell + \sum_{k=1}^{N-1}
b_k\tau^\ell_k)
= a^2(1 + \sum_{j,k=1}^{N-1}g^\ell_{jk}(u)b_jb_k),$$
so that
\begin{equation}\label{form:1}
a = (1 + \sum_{j,k=1}^{N-1}g^\ell_{jk}(u)b_jb_k)^{-1/2}.
\end{equation}
From $<\tilde\bn^\ell_t,  \tau^\ell_t>=0$ and \eqref{basis:1} it follows that 
$$0 = (\tilde\bn^\ell + \sum_{k=1}^{N-1} b_k\tau^\ell_k)
\cdot(\tau^\ell_j + \tilde H_h^\ell\frac{\pd\tilde \bn^\ell}{\pd u_j}+
\frac{\pd \tilde H_h^\ell}{\pd u_j}\tilde\bn^\ell)
= \sum_{j,k=1}^{N-1}g^\ell_{jk}b_k + \frac{\pd \tilde H_h^\ell}{\pd u_j}
+ \sum_{k=1}^{N-1}b_k<\tau^\ell_k, \frac{\pd \tilde\bn^\ell}{\pd u_j}>\tilde H_h^\ell,
$$
where we have used the first formula in \eqref{normal:1} and 
$<\tilde\bn^\ell, \pd \tilde\bn^\ell/\pd u_j> = 0$ which follows from
$|\tilde\bn^\ell|^2=1$. 
Setting $L^\ell = <\pd \tilde\bn^\ell/\pd u_j, \tau^\ell_k>$, 
we have
$$\nabla' \tilde H_h^\ell = -(G^\ell + L^\ell\tilde H_h^\ell)\bb
$$
where we have set $\bb=(b_1, \ldots, b_{N-1})^\top$ and $\nabla'\tilde H_h^\ell
= (\pd\tilde H_h^\ell/\pd  u_1, \ldots, \pd \tilde H_h^\ell/\pd u_{N-1})^\top.$
We now  introduce a symbol  $O^2_\ell$
which denotes a generic term of the form: 
$$O^2_\ell = a^\ell(u, \bar\nabla' \tilde H_h^\ell)
\bar\nabla' \tilde H_h^\ell\otimes \bar\nabla' \tilde H_h^\ell$$
with some matrix $a^\ell(u, \bK')$ defined on $\BR^N\times
U_\delta'$ satisfying the conditions:  
\begin{align*}
\|a^\ell\|_{L_\infty(\BR^N\times U'_\delta)} &\leq C_{M_2}, 
\\
|\nabla_u a^\ell(u, \bar\nabla'\tilde H_h^\ell)| &\leq C_{M_2}|\bar\nabla_u^2 H_h^\ell|
\\
|\nabla^2_ua^\ell(u, \bar\nabla'\tilde H_h^\ell)| &\leq C_{M_2}(|\bar\nabla_u^3 
\tilde H_h^\ell|
+ |\bar\nabla_u^2 \tilde H_h^\ell|^2), \\
|\pd_t a^\ell(u, \bar\nabla'\tilde H_h^\ell)|
& \leq C_{M_2}|\bar\nabla^1_u\pd_t \tilde H_h^\ell|, \\
|\nabla_u\pd_t a^\ell(u, \bar\nabla'\tilde H_h^\ell)| &\leq C_{M_2}
(|\bar\nabla_u^2\pd_t \tilde H_h^\ell|
+ |\bar\nabla_u^2 \tilde H_h^\ell||\bar\nabla^1_u\pd_t \tilde H_h^\ell|)
\end{align*}
provided that \eqref{assump:0} holds with some small number $\delta > 0$,
where we have set $\nabla_u = (\pd/\pd u_1, \ldots, \pd/\pd u_N)$,
 $\bar\nabla_u^k a = (\pd^\alpha a/\pd u^\alpha \mid |\alpha| \leq k)$,
 and  
$\bK' = (k_0, k_1, \ldots, k_{N-1}) \in U'_\delta = \{\bK' \in \BR^N \mid 
|\bK'| < \delta\}$. 
	Choosing $\delta > 0$ small enough in \eqref{assump:0} and using 
\eqref{norm:5} with small $M_1$, we see that 
$(G^\ell + L^\ell \tilde H_h^\ell)^{-1}=(\bI + (G^\ell)^{-1}L^\ell \tilde H_h^\ell)^{-1}
(G^\ell)^{-1}$ exists, and then
$$\bb = -(\bI + (G^\ell)^{-1}L^\ell \tilde H_h^\ell)^{-1}(G^\ell)^{-1}\nabla'\tilde H_h^\ell.
$$
Therefore,  we have
\begin{equation}\label{norm:6-1}\begin{aligned}
\tilde \bn^\ell_t &= (1 + <G^\ell(\bI + (G^\ell)^{-1}L^\ell \tilde H_h^\ell)^{-1}(G^\ell)^{-1}
\nabla'\tilde H_h^\ell, 
(\bI + (G^\ell)^{-1}L^\ell \tilde H_h^\ell)^{-1}(G^\ell)^{-1}\nabla'\tilde H_h^\ell>)^{-1/2}\\
&\quad \times
(\tilde\bn^\ell - <(\bI + (G^\ell)^{-1}L^\ell \tilde H_h^\ell)^{-1}(G^\ell)^{-1}\nabla'\tilde H_h^\ell,
\tilde\tau^\ell>)\\
& = \tilde\bn^\ell - <(G^\ell)^{-1}\nabla' H_h^\ell, \tilde\tau^\ell> 
+ O^2_\ell.
\end{aligned}\end{equation}
Since
$$\frac{\pd \tilde H_h^\ell}{\pd u_j} 
= \sum_{k=1}^N \frac{\pd \Phi^\ell_k}{\pd u_j}\frac{\pd H_h}{\pd y_k}\circ\Phi^{\ell}
$$
setting 
$$
<G^{-1}\nabla_\Gamma H_h, \tau> = \sum_{\ell=1}^n \zeta^\ell
<(G^\ell)^{-1}\nabla'\tilde H_h^\ell, \tilde\tau^\ell>\circ(\Phi^\ell)^{-1},
$$
by \eqref{norm:6-1} we see that there exists a matrix of functions,
$\bV_\bn(y, \bK)$, 
defined on $\BR^N\times U_\delta$ such that  
\begin{equation}\label{form:2}
\bn_t = \bn - <G^{-1}\nabla_\Gamma H_h, \tau> + \bV_\bn(\cdot, \bar\nabla H_h)
\bar\nabla H_h \otimes \bar\nabla H_h
\quad\text{on $\Gamma$}
\end{equation}
and $\bV_\bn(y, \bK)$ satisfies the following conditions:
${\rm supp}\, \bV_\bn(y, \bK) \subset U_\Gamma$ for any $\bK \in U_\delta$, and 
\begin{align*}
\|\bV_\bn\|_{L_\infty(\BR^N\times U_\delta)} &\leq C_{M_2}, 
\\
|\nabla \bV_\bn(y, \bar\nabla H_h)| &\leq C|\bar\nabla^2 H_h|
\\
|\nabla^2 \bV_\bn(y, \bar\nabla  H_h)| &\leq C(|\bar\nabla^3 H_h|
+ |\bar\nabla^2  H_h|^2), \\
|\pd_t\bV_\bn(y, \bar\nabla H_h)|
& \leq C|\bar\nabla\pd_t  H_h|, \\
|\nabla\pd_t \bV_\bn(y, \bar\nabla H_h)| &\leq C
(|\bar\nabla^2\pd_t H_h|
+ |\bar\nabla^2  H_h||\bar\nabla\pd_t H_h|)
\end{align*}
provided that \eqref{assump:0} holds with some small $\delta > 0$.

We next represent $\Delta_{\Gamma_t}$.  Let $G_t=(g_{ijt})$ be the first fundamental
form and set $g_t = \sqrt{\det G_t}$ and $G_t^{-1} = (g_t^{ij})$. Then, $\Delta_{\Gamma_t}$
is given by setting
\begin{equation}\label{laplace:1}\Delta_{\Gamma_t} f = \frac{1}{g_t}\sum_{i,j=1}^{N-1}(g_tg^{ij}_t
\frac{\pd f}{\pd u_j})
\quad\text{on $V_0$}.
\end{equation}
Since $<\tilde \bn^\ell, \pd\tilde\bn^\ell/\pd u_j> = 0$ and
$<\pd\Phi^\ell/\pd u_j, \tilde\bn> = <\tau^\ell_j, \tilde\bn> = 0$, 
in view of \eqref{basis:1},  setting
$$\alpha^\ell_{ij} = <\tau^\ell_i, \frac{\pd\tilde\bn^\ell}{\pd u_j}>
+ <\tau^\ell_j, \frac{\pd\tilde\bn^\ell}{\pd u_i}>, \quad
\beta^\ell_{ij} = <\frac{\pd \tilde\bn^\ell}{\pd u_i}, \frac{\pd \tilde\bn^\ell}{\pd u_j}>,
$$
we have 
$$g^\ell_{tij} = <\tau^\ell_{ti}, \tau^\ell_{tj}> = g^\ell_{ij} 
+ \alpha^\ell_{ij}\tilde H_h^\ell + \beta^\ell_{ij}(\tilde H_h^\ell)^2
+ <\frac{\pd \tilde H_h^\ell}{\pd u_i}, \frac{\pd\tilde H_h^\ell}{\pd u_j}>.
$$
Notice that $\alpha^\ell_{ij}$ and $\beta^\ell_{ij}$ are all bounded $C^2$ functions. 
Here,  what a function, $f$, is bounded $C^2$ means that $f$ is a 
$C^2$ function and $f$ and its derivatives up to order $2$ are all bounded. 
Let $g^\ell_t = \sqrt{\det(g^\ell_{tij})}$ and $(G_t^\ell)^{-1} = (g^{ij\ell}_t)$,
and then  
by  \eqref{assump:0} with small $\delta > 0$ and \eqref{norm:5}, 
we have the representation formulas:  
$$g^\ell_t = g^\ell + \gamma_0^\ell(u)\tilde H_h^\ell + O^2_\ell,
\quad \frac{1}{g^\ell_t} = \frac{1}{g^\ell} + \gamma^\ell_1(u)\tilde H_h^\ell + O^2_\ell,
\quad g^{ij\ell}_t = g^{ij}_\ell + \gamma_{ij}^\ell(u)\tilde H_h^\ell
+ O^2_\ell,
$$
where $\gamma^\ell_0(u)$, $\gamma^\ell_1(u)$ and
 $\gamma^\ell_{ij}(u)$ are some bounded $C^2$ functions
defined on $\BR^N$.  In view of \eqref{laplace:1}, setting 
\begin{equation}\label{laplace:2}\begin{aligned}
V^{1\ell}_{\Delta ij} &= \gamma_{ij}^\ell(u)\tilde H_h^\ell + O^2_\ell, \\
V^{2\ell}_{\Delta j} & = \sum_{i=1}^{N-1}\Bigl(\frac{\pd}{\pd u_i}(\gamma^\ell_{ij}(u)
\tilde H_h^\ell)
+ \frac{\pd}{\pd u_i}O^2_\ell  +\frac{1}{g_\ell}(\pd_i(\gamma^\ell_0(u)\tilde H_h^\ell)
+ \pd_i O^2_\ell)\\
&\quad + (\gamma^\ell_1(u)\tilde H_h^\ell + O^2_\ell)(\pd_ig^\ell
 + \pd_i(\gamma^\ell_0(u)\tilde H_h^\ell)
+ \pd_i O^2_\ell)\Bigr)
\end{aligned}\end{equation}
we have 
\begin{equation}\label{lap:2.4}
\Delta_{\Gamma_t} = \Delta_{\Gamma} + \dot\Delta_{\Gamma_t}
\quad\text{on $\Gamma \cap B^\ell$}, 
\end{equation}
where  $\dot\Delta_{\Gamma_t}$ is an operator 
 defined by setting 
\begin{equation}\label{lap:2.4*}\dot\Delta_{\Gamma_t}f 
= \sum_{\ell=1}^n\zeta^\ell\Bigl(
\sum_{i,j=1}^{N-1}
V^{1\ell}_{\Delta ij}\frac{\pd^2( f(\Phi^\ell(u',0))}{\pd u_i\pd u_j}
+\sum_{j=1}^{N-1} V^{2\ell}_{\Delta j}\frac{\pd (f(\Phi^\ell(u',0))}{\pd u_j}\Bigr)
\circ(\Phi^\ell)^{-1}.
\end{equation}

We finally derive a formula for the surface tension.  Recall that
$H(\Gamma_t)\bn_t = \Delta_{\Gamma_t}x$ for $x \in \Gamma_t$. 
For $x \in \Gamma_t$, $x$ is represented by $x = \Phi^\ell(u', 0)
+ \tilde H_h^\ell(u',0, t)\tilde\bn^\ell(u',0)$
locally.  By \eqref{lap:2.4} and \eqref{lap:2.4*}, 
we have
\begin{align*}
&<H(\Gamma_t)\bn_t, \bn> = <\Delta_\Gamma(y+ H_h\bn), \bn> \\
&+ \sum_{\ell=1}^n \zeta^\ell\Bigl(
\sum_{i,j=1}^{N-1}<V^{1\ell}_{\Delta ij}\frac{\pd^2}{\pd u_i\pd u_j}
(\Phi^\ell + \tilde H_h^\ell\bn^\ell), \bn^\ell>
+ \sum_{j=1}^{N-1}<V^{2\ell}_{\Delta j}\frac{\pd}{\pd u_j}
(\Phi^\ell + \tilde H_h^\ell\bn^\ell), \bn^\ell>
\Bigr)
\end{align*}
on $\Gamma$. 
Since $\Delta_\Gamma y = H_h(\Gamma)\bn$ for $y \in \Gamma$ and
\begin{align*}
<\Delta_\Gamma \bn, \bn> 
= \sum_{\ell=1}^n\zeta^\ell\sum_{i,j=1}^{N-1} g^{ij}_\ell
<\frac{\pd^2 \bn^\ell}{\pd u_i\pd u_j}, \bn^\ell>
= -\sum_{\ell=1}^\infty\zeta^\ell\sum_{i,j=1}^{N-1} g^{ij}_\ell
<\frac{\pd  \bn^\ell}{\pd u_i},\frac{\pd \bn^\ell}{\pd u_j}>
=-<G^{-1}\nabla_\Gamma\bn, \nabla_\Gamma\bn>
\end{align*}
as follows from  $<\pd \tilde\bn^\ell/\pd u_i, \tilde \bn^\ell> = 0$,
we have
$$ <\Delta_\Gamma(y+ H_h\bn), \bn>
= H_h(\Gamma) + \Delta_\Gamma H_h  - <G^{-1}\nabla_\Gamma \bn, \nabla_\Gamma\bn>H_h.$$
Moreover, by \eqref{lap:2.4*} and \eqref{laplace:2},  we have
\begin{align*}
<V^{1\ell}_{\Delta ij}\frac{\pd^2}{\pd u_i\pd u_j}\Phi^\ell, \tilde\bn^\ell>
&=  <\gamma^\ell_{ij}\frac{\pd^2}{\pd u_i\pd u_j}\Phi^\ell, \tilde\bn^\ell> \tilde H_h^\ell
+ O^2_\ell, \\
<V^{1\ell}_{\Delta ij}\frac{\pd^2}{\pd u_i\pd u_j}(\tilde H_h^\ell \tilde\bn^\ell), \tilde\bn^\ell>
& = V^{1\ell}_{\Delta ij}\frac{\pd ^2 \tilde H_h^\ell}{\pd u_i\pd u_j} 
+  <V^{1 \ell}_{\Delta ij}\frac{\pd ^2\tilde \bn^\ell}{\pd u_i\pd u_j}, \tilde\bn^\ell>
\tilde H_h^\ell\\
&=  V^{1\ell}_{\Delta ij}\frac{\pd ^2 \tilde H_h^\ell}{\pd u_i\pd u_j} 
-  <(\frac{\pd}{\pd u_i}V^{1 \ell}_{\Delta ij})\frac{\pd \tilde \bn^\ell}{\pd u_j}, \tilde\bn^\ell>
\tilde H_h^\ell
-<V^{1 \ell}_{\Delta ij}\frac{\pd \tilde \bn^\ell}{\pd u_j}, \frac{\pd\tilde\bn^\ell}{\pd u_i}>
\tilde H_h^\ell,\\
<V^{2\ell}_{\Delta j}\frac{\pd}{\pd u_j}(\tilde H_h^\ell \tilde\bn^\ell), \tilde\bn^\ell>
&= V^{2\ell}_{\Delta j}\frac{\pd \tilde H_h^\ell}{\pd u_j}. 
\end{align*}
Combining these formulas gives that 
\begin{equation}\label{surface}\begin{aligned}
< H_h(\Gamma_t)\bn_t, \bn> &= H_h(\Gamma) + \Delta_\Gamma H_h 
+ a(y)H_h + \bV_s(y, \bar\nabla H_h)\bar\nabla H_h\otimes \bar\nabla^2 H_h 
\end{aligned}\end{equation}
where $a(y)$ is a bounded $C^1$ function, and 
 $\bV_s = \bV_s(y, \bK)$ are some matrices of 
 functions defined on $\BR^N\times U_\delta$ such that
${\rm supp}\, \bV_s(y, \bK) \subset U_\Gamma$ for any $\bK \in U_\delta$,
$\sup_{t \in (0,  T)} \|\bV_s(\cdot, \bar\nabla H_h)\|_{L_\infty(\Omega)} \leq C_{M_2}$, 
\begin{equation} \label{surface*}\begin{aligned}
& |\nabla \bV_s(y, \bar\nabla H_h)| \leq C_{M_2}|\bar\nabla^2H_h(y, t)|, 
\quad 
 |\pd_t\bV_s(y, \bar\nabla H_h)| \leq C_{M_2}|\bar\nabla \pd_tH_h(y, t)|.
  \end{aligned}\end{equation}
  provided that \eqref{assump:0} holds with some small constant $\delta > 0$. 

\subsection{Derivation of transmission conditions and kinematic condition}\label{sec:new2.3}


We first consider the kinematic condition: 
$V_{\Gamma_t} = \bv_+\cdot\bn_t$. Note that $\bv_+ = \bv_-$ on $\Gamma_t$.
 Since 
$$V_{\Gamma_t} = \frac{\pd x}{\pd t}\cdot\bn_t= \frac{\pd H_h}{\pd t}
\bn\cdot\bn_t, $$
it follows from  \eqref{form:2} that 
\begin{equation}\label{maineq:3}
\pd_th  + <\nabla_\Gamma h \perp \bu_+> - \bu_+\cdot\bn  = 
<\bu_+-\frac{\pd H_h}{\pd t}\bn, \bV_n(\cdot, \bar\nabla H_h)\bar\nabla H_h \otimes
\bar\nabla H_h>.
\end{equation}
Here and in the following, we write
 $$<\nabla_\Gamma h \perp \bu_+>
 = \sum_{\ell=1}^n\zeta^\ell\Bigl(
 \sum_{i,j=1}^{N-1}g^{ij}_\ell\frac{(\pd h\circ\Phi^\ell)}{\pd u_j}
 <\tilde \tau^\ell_i, \bu_+\circ\Phi^\ell>\Bigr).
$$
If we move $<\nabla_\Gamma h \perp \bu_+>$ to the right hand side in proving
the local wellposedness by using a standard fixed point argument, we have to 
assume the smallness of initial velocity field $\bu_0$ as well as the smallness of 
initial height $h_0$.  But, this is not  satisfactory.  We have to treat at least
the large initial velocity case for the local well-posedness.  To avoid the 
smallness assumption of initial velocity field, 
we use an idea due to Padula and Solonnikov 
\cite{PS}.   Let $\bu_0 \in B^{2(1-1/p)}_{q,p}(\dot\Omega)$ be an 
initial velocity field and set $\bu^+_0 = \bu_0|_{\Omega_+}$.  We know that 
$[[\bu_0]] = 0$ on $\Gamma$, which follows from the compatibility
conditions. Let $\tilde \bu^+_0$ be an extension of 
$\bu^+_0$ to $\BR^N$ such that $\tilde\bu^+_0 = \bu^+_0$ in 
$\Omega_+$ and 
\begin{equation}\label{4.4.1} \|\tilde\bu^+_0\|_{B^{2(1-1/p)}_{q,p}(\BR^N)}
\leq C\|\bu^+_0\|_{B^{2(1-1/p)}_{q,p}(\Omega_+)}.
\end{equation}
 Let 
 $$\bu_\kappa = \frac{1}{\kappa}\int^\kappa_0T_0(s)\tilde\bu^+_0\,ds$$
 where $\{T_0(s)\}_{s\geq 0}$ is a $C^0$ analytic semigroup generated by
 $-\Delta + \lambda_0$ with large $\lambda_0$ in $\BR^N$, that is 
 $$T_0(s)f = \CF^{-1}\bigl[e^{-s(|\xi|^2+\lambda_0)}\hat f(\xi)\bigr](x).
 $$
 Here, $\hat f$ denotes the Fourier transform of $f$ and $\CF^{-1}$ the 
 inverse Fourier transform.  We know that
 \begin{equation}\label{4.4.2}\begin{aligned}
 \|T_0(\cdot)\tilde\bu^+_0\|_{L_p((0, \infty), H^2_q(\BR^N))} 
 + \|\pd_tT_0(\cdot)\tilde\bu^+_0\|_{L_p((0, \infty), L_q(\BR^N))}
 &+ \|T_0(\cdot)\tilde\bu^+_0\|_{L_\infty((0, \infty), B^{2(1-1/p)}_{q,p}(\BR^N))}\\
 &\leq C\|\bu^+_0\|_{B^{2(1-1/p}_{q,p}(\BR^N)},
 \end{aligned}\end{equation}
 which yields that
 \begin{equation}\label{4.4.3}\begin{aligned}
 \|\bu_\kappa\|_{B^{2(1-1/p)}_{q,p}(\BR^N)} & 
 \leq C\|\bu^+_0\|_{B^{2(1-1/p)}_{q,p}(\Omega_+)}, \\
 \|\bu_\kappa\|_{H^2_q(\BR^N)}
 & \leq C\kappa^{-1/p}\|\bu^+_0\|_{B^{2(1-1/p)}_{q,p}(\Omega_+)}.
 \end{aligned}\end{equation}
 As a kinematic condition, we use the following equation:
 \begin{equation}\label{kinema:1} \pd_th + <\nabla_\Gamma h \perp \bu_\kappa> 
 -\bu\cdot\bn = d(\bu, H_h)
 \end{equation}
 with
 \begin{equation}\label{kinema:2}
 d(\bu, H_h) = <\nabla_\Gamma H_h \perp \bu- \bu_\kappa> 
 + <\bu - \frac{\pd H_h}{\pd t}\bn, \bV_\bn(\cdot, \bar\nabla H_h)
 \bar\nabla H_h \otimes\bar\nabla H_h>.
 \end{equation}
 
Let  $\CE_\mp$ be an  the extension map
acting on $\bu_\pm \in H^2_q(\Omega_\pm)$ satisfying the 
properties:
$\CE_\mp(\bu_\pm) \in H^2_q(\Omega)$,  $\CE_\mp(\bu_\pm) = \bu_\pm$ in $\Omega_\pm$, 
\begin{equation}\label{trace:1}
(\pd_x^\alpha \CE_\mp(\bu_\pm))(x_0)
= \lim_{x\to x_0 \atop x \in \Omega_\pm}\pd_x^\alpha \bu_\pm(x)
\end{equation}
for $x_0 \in \Gamma$ and $\alpha \in \BN_0^N$ with $|\alpha|\leq 1$, 
and 
\begin{equation}\label{trace:1*}
\|\CE_\mp(\bu_\pm)\|_{H^\ell_q(\Omega)}
\leq C_{\ell, q}\|\bu_\pm\|_{H^\ell_q(\Omega_\pm)}
\end{equation}
for $\ell=0,1,2$. Note that 
\begin{equation}\label{trace:2}
[[\pd_x^\alpha\bu]] = \pd_x^\alpha\CE_-(\bu_+)|_\Gamma
- \pd_x^\alpha\CE_+(\bu_-)|_\Gamma
\end{equation}
for $|\alpha|\leq 1$ on $\Gamma$.
For the notational simplicity, we write
\begin{equation}\label{trace:3}
tr[\bu] = \CE_-(\bu_+) - \CE_+(\bu_-)
\end{equation}
and then, we have
\begin{equation}\label{trace:4}\begin{aligned}
\pd_x^\alpha\,tr[\bu]|_{\Gamma}&=[[\pd_x^\alpha\bu]]\\
\|tr[\bu]\|_{H^i_q(\Omega)} &\leq C(\|\bu_+\|_{H^i_q(\Omega)} + 
\|\bu_-\|_{H^i_q(\Omega)}) = C\|\bu\|_{H^i_q(\dot\Omega)}
\end{aligned}\end{equation}
for $i=0,1,2$.

We next consider the interface conditions.  First, we consider
\begin{equation}\label{B}
[[(\bT(\bv, \fp) + \bT_M(\bH_h))\bn_t]]=\sigma H_h(\Gamma_t)\bn_t
\quad \text{on $\Gamma_t$}.
\end{equation}
Let 
\begin{equation}\label{Proj}
\BPi_t\bd = \bd - <\bd, \bn_t>\bn_t, 
\quad
\BPi_0\bd = \bd - <\bd, \bn>\bn
\end{equation}
The following lemma was given in Solonnikov \cite{Sol15}.
\begin{lem}\label{Lem:2.1}
If $\bn_t\cdot\bn\not=0$, then for arbitrary vector $\bd$, 
$\bd=0$ is equivalent to
\begin{equation}\label{deom}
\BPi_0\BPi_t\bd = 0 \quad\text{and}\quad
\bn\cdot\bd = 0.
\end{equation}
\end{lem}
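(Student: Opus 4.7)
The plan is to note that the forward implication ($\bd=0$ implies both $\BPi_0\BPi_t\bd=0$ and $\bn\cdot\bd=0$) is trivial, so all the work is in the reverse direction. I will assume $\BPi_0\BPi_t\bd=0$ and $\bn\cdot\bd=0$ and derive $\bd=0$ by exploiting the fact that $\{\bn,\bn_t\}$ spans a two-dimensional plane (since $\bn\cdot\bn_t\ne 0$, they are not orthogonal, but they are linearly independent whenever $\bn_t\ne\pm\bn$; and if $\bn_t=\pm\bn$, the projections $\BPi_0$ and $\BPi_t$ coincide and the statement reduces to the standard decomposition).

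The key algebraic step is to expand $\BPi_0\BPi_t\bd$ explicitly. From the definitions,
\begin{equation*}
\BPi_0\BPi_t\bd = \bd - \langle\bd,\bn_t\rangle\bn_t - \langle\bd,\bn\rangle\bn + \langle\bd,\bn_t\rangle\langle\bn_t,\bn\rangle\bn.
\end{equation*}
Using $\langle\bd,\bn\rangle=0$, the hypothesis $\BPi_0\BPi_t\bd=0$ collapses to
\begin{equation*}
\bd = \langle\bd,\bn_t\rangle\bigl(\bn_t-\langle\bn_t,\bn\rangle\bn\bigr).
\end{equation*}
Taking the inner product of this identity with $\bn_t$ and using $|\bn_t|=1$ yields
$\langle\bd,\bn_t\rangle = \langle\bd,\bn_t\rangle(1-\langle\bn_t,\bn\rangle^2)$, i.e.\ $\langle\bd,\bn_t\rangle\langle\bn_t,\bn\rangle^2=0$. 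Since $\langle\bn_t,\bn\rangle\ne 0$ by hypothesis, we conclude $\langle\bd,\bn_t\rangle=0$, and substituting back into the displayed identity gives $\bd=0$.

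There is no real obstacle here; the argument is a short exercise in vector algebra. The only point that requires a moment of care is checking that taking the inner product with $\bn_t$ (rather than with $\bn$) is the productive move: pairing with $\bn$ just recovers the trivial identity $0=0$, while pairing with $\bn_t$ exposes the factor $\langle\bn_t,\bn\rangle^2$ which the nondegeneracy assumption is precisely designed to eliminate. The lemma is later used to replace the three vector transmission conditions on $\Gamma_t$ by one tangential (two-projection) and one normal condition, which is the form adapted to the Hanzawa-transformed system.
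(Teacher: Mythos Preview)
Your argument is correct: the expansion of $\BPi_0\BPi_t\bd$ is accurate, the reduction using $\langle\bd,\bn\rangle=0$ is right, and pairing with $\bn_t$ cleanly isolates the factor $\langle\bn_t,\bn\rangle^2$ to force $\langle\bd,\bn_t\rangle=0$. The paper itself does not supply a proof of this lemma at all---it simply attributes the result to Solonnikov \cite{Sol15} and moves on---so your short algebraic verification is a welcome addition rather than a deviation from any argument in the text.
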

In view of Lemma \ref{Lem:2.1}, the interfae condition \eqref{B} is
equivalent to that the following two conditions hold:
\begin{align}
\BPi_0\BPi_t
[[\nu(\bD(\bu) + \bV_D(\bK)\nabla\bu) ) + \bT_M(\bG)]]\bn_t &=0, 
\label{equiv:1} \\
\bn\cdot([[\nu(\bD(\bu) + \bV_D(\bK)\nabla \bu) - \fq\bI+\bT_M(\bG)]]\bn_t
-\sigma H_h(\Gamma_t)\bn_t) &= 0.
\label{equiv:2}
\end{align}
Here and hereafter, $\bV_D(\bK)\nabla \bu$ is the $N\times N$ matrix
with $(i, j)$ components $V_{Dij}(\bK)\nabla\bu$ (cf. \eqref{dc:1}).
Noting that $\BPi_0\BPi_0 = \BPi_0$, 
 we see that the condition \eqref{equiv:1} is written by 
\begin{equation}\label{maineq:4}
\BPi_0[[\nu\bD(\bu)]]\bn = \bh'_1(\bu, \bG, H_h)
\end{equation}
with
\begin{equation}\label{g:3}\begin{split}
\bh'_1(\bu, \bG, H_h) &= \BPi_0(\BPi_0 - \BPi_t)[[\nu\bD(\bu)]]\bn_t
+ \BPi_0[[\nu\bD(\bu)]](\bn-\bn_t) \\
&-\BPi_0\BPi_t[[\nu\bV_D(\bK)\nabla\bu + \bT_M(\bG)]]\bn_t.
\end{split}\end{equation}

On the other hand, by \eqref{surface} 
we see that Eq. \eqref{equiv:2} is written by 
\begin{equation}\label{mainq:5}
\bn\cdot[[\nu\bD(\bu)-\fq\bI]]\bn 
 -\sigma(\Delta_\Gamma h + ah)
 = h_{1N}(\bu, \bG, H_h) + \sigma \bV_s(\cdot, \bar\nabla H_h)\bar\nabla H_h
 \otimes \bar\nabla^2 H_h.
\end{equation}
with
\begin{equation}\label{g:4}\begin{split}
&h_{1N}(\bu, \bG, H_h)  = (\bn\cdot\bn_t)^{-1}\{\bn\cdot[[\nu\bD(\bu)]]
(\bn - \bn_t) -\bn\cdot[[\nu\bV_D(\bK)\nabla\bu + \bT_M(\bG)]]\bn_t\}.
\end{split}\end{equation}
In particular, setting 
$$\bh_1(\bu, \bG, H_h ) = (\bh'_1(\bu, \bG, H_h), 
 h_{1N}(\bu, \bG, H_h) + \sigma \bV_s(\cdot, \bar\nabla H_h)\bar\nabla H_h
 \otimes \bar\nabla^2 H_h), $$
 in view of \eqref{dc:1}, \eqref{form:2}, and \eqref{trace:4},  we may write
\begin{equation}\label{g.3*}\begin{aligned}
\bh_1(\bu, \bG, H_h ) & = \bV^1_\bh(\cdot, \bar\nabla H_h)\bar\nabla H_h \otimes\nabla
tr[\bu] +a(y)tr[\bG]\otimes tr[\bG] 
+ \bV^2_\bh(\cdot, \bar\nabla H_h)\bar\nabla H_h\otimes
tr[\bG]\otimes tr[\bG]\\
& + \bV_s(\cdot, \bar\nabla H_h)\bar\nabla H_h \otimes\bar\nabla^2H_h.
\end{aligned}\end{equation}
Here,  $a(y)$ is an $N$-vector of bounded $C^2$ function, and
$\bV^i_\bh(\cdot, \bK)$ ($i=1,2$) are some matrices of funtions defined
on $\BR^N\times U_\delta$ satisfying the conditions:
$\|\bV^i_\bh\|_{L_\infty(\BR^N\times U_\delta)} \leq C$, 
${\rm supp}\, \bV^i_\bh(y, \bK) \subset  U_\Gamma$, 
\begin{equation} \label{g.4*}\begin{aligned}
& |\nabla \bV_\bh^i(y, \bar\nabla H_h)| \leq C|\bar\nabla^2H_h(y, t)|, 
\quad 
 |\pd_t\bV_\bh^i(y, \bar\nabla H_h)| \leq C|\bar\nabla \pd_tH_h(y, t)|.
  \end{aligned}\end{equation}
provided that \eqref{assump:0} holds with some somall $\delta > 0$.

From \eqref{dc:1} we see that the interface condition:
$[[\alpha^{-1}{\rm curl}\,\bH_h + \mu(\bv\otimes\bH_h-\bH_h\otimes\bv)]]\bn_t=0$
is written by
\begin{equation}\label{maineq:6}
[[\alpha^{-1}{\rm curl}\,\bG]]\bn = \bh_2(\bu, \bG, H_h)
\end{equation}
with
$$\bh_2(\bu, \bG, H_h) = [[\alpha^{-1}\curl\bG]](\bn - \bn_t) 
-[[\alpha^{-1}\bV_C(\bK)\nabla\bu]]\bn_t 
-[[\mu(\bu\otimes\bG - \bG\otimes\bu)]]\bn_t.
$$
Here, $\bV_C(\bK)\nabla\bu$ is the $N\times N$ matrix with $(i, j)$ components 
quantities $V_{Cij}(\bK)\nabla\bu$ given in \eqref{dc:1}.
In particular, in view of \eqref{dc:1}, \eqref{form:2}, and \eqref{trace:4}, we may write
\begin{equation}\label{maineq:6*}\begin{aligned}
\bh_2(\bu, \bG, H_h)
&= \bV^3_\bh(\cdot, \bar\nabla H_h)\bar\nabla H_h \otimes\nabla tr[\bu](\CE_-(\bu_+)
+ b(y)tr[\bu]\otimes tr[\bG]\\
&+ \bV^4_\bh(\cdot, \bar\nabla H_h)
\bar\nabla H_h \otimes tr[\bu]\otimes tr[\bG]. 
\end{aligned}\end{equation}
Here, $b(y)$ is an $N$-vector of $C^2$ functions, 
and $\bV^i_\bh(\cdot, \bK)$ ($i=3,4$) are some matrices of funtions defined
on $\BR^N\times U_\delta$ satisfying the same conditions as these stated in \eqref{g.4*}
provided that \eqref{assump:0} holds with some somall $\delta > 0$.

From \eqref{div:1}, we see that the interface condition:
$[[\mu\dv\bH_h]]=0$ is written by 
\begin{equation}\label{maineq:7}
[[\mu\dv\bG]]= h_3(\bu, \bG, H_h)
\end{equation}
with
$$h_3(\bu, \bG, H_h) = -\mu\sum_{j,k=1}^N\tilde V_{0jk}(\bK)\bK
\frac{\pd}{\pd y_k}tr[\bu]_j
$$
where $V_{0jk}(\bK) = \tilde V_{0jk}(\bK)\bK$ are the symbols given in \eqref{trans:2} and
$tr[\bu] = (tr[\bu]_1, \ldots, tr[\bu]_N)$.

Finally, the interface conditions:
$[[\mu\bH\cdot\bn_t]]=0$ and $[[\bH-<\bH, \bn_t>\bn_t]]=0$
are written by 
\begin{equation}\label{maineq:8}
[[\mu\bG\cdot\bn]]= k_1(\bG, H_h), \quad
[[\bG-<\bG, \bn>\bn]] = \bk_2(\bG, H_h)
\end{equation}
with
$$k_1(\bG, H_h) = [[\mu\bG\cdot(\bn-\bn_t)]],\quad
\bk_2(\bG, H_h) = [[<\bG, \bn_t-\bn>\bn_t]]
+ [[<\bG, \bn>(\bn_t-\bn)]].
$$
In particular, in view of  \eqref{form:2} and \eqref{trace:4}, we may write
\begin{equation}\label{maineq:6*}\begin{aligned}
(k_1(\bG, H_h), \bk_2(\bG, H_h))
&=\bV^5_\bk(\cdot, \bar\nabla H_h)\bar\nabla H_h\otimes tr[\bG]. 
\end{aligned}\end{equation}
Here, $\bV^5_\bk(\cdot, \bK)$ is some matrices of funtions defined
on $\BR^N\times U_\delta$ satisfying the same conditions as these stated in \eqref{g.4*}
provided that \eqref{assump:0} holds with some somall $\delta > 0$.

\subsection{Statement of the local well-posedness theorem}\label{sec:new2.4}

Summing up the results obtained in subsections \ref{sec:new2.1},
\ref{sec:new2.2}, and \ref{sec:new2.3}, we have seen that equations \eqref{mhd.2}
are transformed to the following equations: 
\begin{equation}\label{mhd.3}\left\{
\begin{aligned}
\rho\pd_t\bu - \DV\bT(\bu, \fq) &= \bff_1(\bu, \bG, H_h),
&\quad &\text{in $\dot\Omega\times(0, T)$}, \\
 \dv\bu= g(\bu, H_h) &= \dv\bg(\bu, H_h)
&\quad &\text{in $\dot\Omega\times(0, T)$}, \\
\pd_t h +<\nabla_\Gamma h \perp \bu_\kappa> - \bn\cdot\bu &= d(\bu, H_h)
&\quad &\text{on $\Gamma\times(0, T)$},\\
[[\bu]]=0, \quad 
[[\bT(\bu, \fq)\bn]] -\sigma (\Delta_\Gamma h + ah)\bn &= \bh_1(\bu, \bG, H_h),
&\quad &\text{on $\Gamma\times(0, T)$}, \\
\mu\pd_t\bG - \alpha^{-1}\Delta\bG &=\bff_2(\bu, \bG, H_h)
&\quad &\text{in $\dot\Omega\times(0, T)$}, \\
[[\alpha^{-1}\curl\bG]]\bn= \bh_2(\bu, \bG, H_h), \quad [[\mu\dv\bG]] 
&=h_3(\bu, \bG, H_h)
&\quad &\text{on $\Gamma\times(0, T)$}, \\
[[\mu\bG\cdot\bn]]=k_1(\bG, H_h),
\quad [[\bG-<\bG, \bn>\bn]]&=\bk_2(\bG, H_h)
&\quad &\text{on $\Gamma\times(0, T)$}, \\
\bu_\pm=0,\quad \bn_\pm\cdot\bG_\pm =0, \quad
(\curl\bG_\pm)\bn_\pm &= 0
&\quad&\text{on $S_\pm\times(0, T)$}, \\
(\bu, \bG, h)|_{t=0}  &= (\bu_0, \bG_0, h_0)
&\quad&\text{in $\dot\Omega\times\dot\Omega\times \Gamma$},
\end{aligned}
\right.
\end{equation}
where, $H_h$ is a function satisfying  Eq. \eqref{eq:harmonic} for $h$.

 The purpose of this paper is to prove
the following local in time unique existence theorem. 
\begin{thm}\label{thm:main}
Let $2 < p < \infty$, $N < q < \infty$ and  $2/p + N/q < 1$  and $B > 0$. Assume that 
 the condition \eqref{initial:0} holds.
Then, there exist a small number  $\epsilon$ and a small time $T > 0$ depending on $B$ such that
if initial data $h_0 \in B^{3-1/p-1/q}_{q,p}(\Gamma)$ satisfies 
the smallness condition $\|h_0\|_{B^{3-1/p-1/q}_{q,p}}\leq \epsilon$, 
and  $(\bv_0, \bH_0) \in B^{2(1-1/p)}_{q,p}(\dot\Omega)^{2N}$
 satisfies $\|(\bv_0, \bH_0)\|_{B^{2(1-1/p)}_{q,p}(\dot\Omega)}
 \leq B$ and the compatibility condition:
\begin{equation}\label{compati}\begin{split}
\dv \bv_0 = 0 
 \quad&\text{in $\dot\Omega$}, \\
[[(\nu\bD(\bv_0)+\bT_M(\bH_0)\bn]]_\tau 
= 0, \quad [[\bv_0]]=0 \quad&\text{on $\Gamma$},  \\
[[\{\alpha^{-1}\curl\bH_0 + \mu(\bv_0\otimes\bH_0- \bH_0\otimes\bv_0)\}\bn]]
=0, \quad
[[\mu\dv\bH_0]] = 0 \quad&\text{on $\Gamma$}, \\
[[\mu\bH_0\cdot\bn]]=0, \quad
[[\bH_0-<\bH_0, \bn>\bn]] = 0 \quad&\text{on $\Gamma$}, \\
\bv_{0\pm} = 0, \quad \bn_0\cdot\bH_{0\pm} = 0, \quad
(\curl \bH_{0\pm})\bn_\pm = 0 
\quad&\text{on $S_\pm$},
\end{split}\end{equation}
then, Eq. \eqref{mhd.3} admits  unique solutions 
$\bu$, $\fq$,  $\bG$, and $h$ with
\begin{alignat*}2
&\bu \in H^1_p((0, T), L_q(\dot\Omega)^N) \cap 
L_p((0, T), H^2_q(\dot\Omega)^N),
&\quad &\fq \in L_p((0, T), H^1_q(\dot\Omega) + \hat H^1_q(\Omega), \\
&\bG \in H^1_p((0, T), L_q(\dot\Omega)^N) \cap 
L_p((0, T), H^2_q(\dot\Omega)^N), &\quad
&h \in H^1_p((0, T), W^{2-1/q}_q(\Gamma)^N) \cap L_p((0, T),
W^{3-1/q}_q(\Gamma)), \\
&\|H_h\|_{L_\infty((0, T), H^1_\infty(\Omega)} \leq \delta
\end{alignat*}
possessing the estimate:
\begin{align*}
&\|(\bu, \bG)\|_{L_p((0, T), H^2_q(\dot\Omega))} 
+ \|\pd_t(\bu, \bG)\|_{L_p((0, T), L_q(\dot\Omega))} \\
&\quad + \|h\|_{L_p((0, T), W^{3-1/q}_q(\Gamma))}
+ \|\pd_th\|_{L_p((0, T), W^{2-1/q}_q(\Gamma))}
+ \|\pd_th\|_{L_\infty((0, T), W^{1-1/q}_q(\Gamma))}
\leq f(B)
\end{align*}
Here, $\delta$ is a constant appearing in \eqref{assump:0}
and $f(B)$ is some polynomial of $B$. 
\end{thm}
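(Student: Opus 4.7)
The plan is to construct the solution by a Banach fixed-point argument in the maximal $L_p$-$L_q$ regularity class, reducing \eqref{mhd.3} to a linearized system whose right-hand sides are the nonlinear terms evaluated at the previous iterate. First I would isolate the linear part of \eqref{mhd.3}, which splits into two weakly coupled blocks: (i) a two-phase Stokes block with surface-tension-coupled height equation for $(\bu, \fq, h)$, namely $\rho\pd_t\bu - \DV\bT(\bu,\fq) = \bff$, $\dv\bu = g = \dv\bg$, $[[\bu]] = 0$, $[[\bT(\bu,\fq)\bn]] - \sigma(\Delta_\Gamma h + ah)\bn = \bh$, together with the modified kinematic condition $\pd_t h + \langle \nabla_\Gamma h \perp \bu_\kappa\rangle - \bn\cdot\bu = d$; and (ii) a linear parabolic transmission problem for $\bG$ with the curl-type and divergence jump conditions. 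Since $\bu_\kappa$ is time-independent, block (i) is a compact perturbation of the generation problem for a $C^0$-analytic semigroup, and the authors' earlier $\CR$-bounded operator theory yields maximal $L_p$-$L_q$ regularity for each block with the norms stated in Theorem \ref{thm:main}.

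Next I would set up the iteration. Fix $T,\kappa\in(0,1)$ small and a constant $L = f(B)$ to be adjusted, and introduce the closed ball
\begin{equation*}
\CU_{L,T} = \bigl\{ (\bu,\bG,h) : \|\pd_t(\bu,\bG)\|_{L_p(L_q)} + \|(\bu,\bG)\|_{L_p(H^2_q)} + \|h\|_{L_p(W^{3-1/q}_q)} + \|\pd_t h\|_{L_p(W^{2-1/q}_q)} \le L \bigr\},
\end{equation*}
intersected with the smallness constraint $\sup_{0<t<T}\|\bar\nabla H_h(\cdot,t)\|_{L_\infty(\Omega)} \le \delta$ from \eqref{assump:0} and the correct initial trace. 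For $(\bu,\bG,h)\in\CU_{L,T}$ define $\Phi(\bu,\bG,h) = (\bU,\bQ,\bH,h')$ as the solution of the linear problem with data $\bff_1(\bu,\bG,H_h)$, $g(\bu,H_h)$, $\bg(\bu,H_h)$, $d(\bu,H_h)$, $\bh_1(\bu,\bG,H_h)$, $\bff_2$, $\bh_2$, $h_3$, $k_1$, $\bk_2$. The structural decompositions \eqref{maineq:1*}, \eqref{div:3*}, \eqref{maineq:2*}, \eqref{g.3*}, together with \eqref{surface*}, \eqref{g.4*}, and the embedding $B^{2(1-1/p)}_{q,p}(\dot\Omega) \hookrightarrow L_\infty$ guaranteed by $2/p+N/q<1$, reduce the nonlinear estimates to products of $L_\infty$- and $L_p(H^2_q)$-norms. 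Each summand then carries either a factor $\|\bar\nabla H_h\|_{L_\infty(L_\infty)}$, controlled by $\|h_0\|_{B^{3-1/p-1/q}_{q,p}} \le \epsilon$ via \eqref{1.3.2} and the embedding $H^1_p(H^2_q)\cap L_p(H^3_q)\hookrightarrow C([0,T];H^{2+1-2/p}_q)$, or a factor $T^\sigma$ with $\sigma>0$ arising from Hölder in time. Smallness of $\epsilon$ plus smallness of $T$ gives $\Phi(\CU_{L,T})\subset \CU_{L,T}$; Lipschitz bounds on the analytic coefficients $\CF^i_j$, $\CG_i$, $\bV_\bn$, $\bV_s$, $\bV^i_\bh$, $\bV^5_\bk$ on $U_\delta$ then yield contraction.

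The main obstacle is treating the kinematic condition without a smallness hypothesis on $\bu_0$. If one kept the natural form $\pd_t h - \bn\cdot\bu_+ = -\langle \nabla_\Gamma h \perp \bu_+\rangle + \ldots$, the right-hand side would estimate as $\|\bu\|_{L_\infty(L_\infty(\Gamma))}\|h\|_{L_p(W^{3-1/q}_q)}$, a term of size $B\cdot L$ that no positive power of $T$ can absorb. The device from \cite{PS} adopted here is to split $\langle\nabla_\Gamma h\perp\bu_+\rangle = \langle\nabla_\Gamma h \perp \bu_\kappa\rangle + \langle\nabla_\Gamma h \perp (\bu_+-\bu_\kappa)\rangle$, keep the first piece inside the linear operator as a bounded perturbation, and place only the difference on the right. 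The bounds \eqref{4.4.2}-\eqref{4.4.3} make $\|\bu-\bu_\kappa\|_{L_\infty((0,T),L_q)}$ small by choosing $\kappa$ and then $T$ small, at the price of potentially large $\|\bu_\kappa\|_{H^2_q}$. The technical crux is therefore establishing maximal $L_p$-$L_q$ regularity for the $(\bu,\fq,h)$-block with the added first-order-in-tangential-derivative, large-coefficient transport term $\langle \nabla_\Gamma h \perp \bu_\kappa\rangle$ on $\Gamma$; this requires a perturbation argument on the generator of the two-phase Stokes-surface-tension semigroup in which $\bu_\kappa$ is treated as a fixed but possibly large $H^2_q$-coefficient and a commutator estimate on $\Gamma$ controls its effect, so that Banach's fixed-point theorem closes for $L=f(B)$ polynomial in $B$.
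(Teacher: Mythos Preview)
Your outline follows the paper's strategy closely: Banach fixed point on a maximal-regularity ball, the Padula--Solonnikov splitting $\langle\nabla_\Gamma h\perp\bu_+\rangle = \langle\nabla_\Gamma h\perp\bu_\kappa\rangle + \langle\nabla_\Gamma h\perp(\bu_+-\bu_\kappa)\rangle$, and the decoupled linear theory for the Stokes and magnetic blocks. The linear result for block~(i) with the large transport coefficient $\bu_\kappa$ is exactly Theorem~\ref{thm:4.1.1}, so that part is fine.

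There is, however, a genuine gap in your iteration map. You propose to solve both linear blocks in parallel with all right-hand sides evaluated at the previous iterate $(\bu,\bG,h)$; in particular you feed $k_1(\bG,H_h)$ and $\bk_2(\bG,H_h)$ into the magnetic block. This does not close. The zeroth-order transmission data $(k_1,\bk_2)$ must lie in $L_p(\BR,H^2_q)\cap H^1_p(\BR,L_q)$ by Theorem~\ref{thm:4.2.1}, and from \eqref{maineq:6*} they have the form $\bV^5_\bk(\cdot,\bar\nabla H_h)\,\bar\nabla H_h\otimes tr[\bG]$. Estimating in $L_p(H^2_q)$ forces the pairing $\|h\|_{L_p((0,T),W^{3-1/q}_q(\Gamma))}\cdot\|\bG\|_{L_\infty((0,T),H^1_q(\dot\Omega))}$, and neither factor carries a small power of $T$ or of $\epsilon$: the first is a top-order maximal-regularity norm bounded only by $L$, and the second is bounded by $C(B+L)$ via the trace estimate \eqref{4.5.12}. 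You therefore land on a term of order $B\cdot L$ in the self-map bound, which cannot be absorbed into $L$ for large $B$; the same obstruction kills the contraction estimate. Your sentence ``each summand then carries either a factor $\|\bar\nabla H_h\|_{L_\infty(L_\infty)}$ \ldots\ or a factor $T^\sigma$'' is false precisely for this term.

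The paper's fix (and its main new idea relative to \cite{PS}) is to make the iteration \emph{sequential}: first solve the Stokes block \eqref{eq:4.1} for $(\bv,\fq,\rho)$, obtain the a~priori bound $\|\rho\|_{L_p((0,T),W^{3-1/q}_q(\Gamma))}\le M_1(B^2+L^3T^{s/(p'(1+s))})$ from \eqref{semi:4.8}, and only then solve the magnetic block \eqref{eq:4.2} with $k_1(\bG,H_\rho)$, $\bk_2(\bG,H_\rho)$ built from the \emph{just-computed} $\rho$ rather than the input $h$. The dangerous product becomes $B\cdot\|\rho\|_{L_p(W^{3-1/q}_q)}\le M_1B^3+M_1BL^3T^{s/(p'(1+s))}$: the first piece is a pure power of $B$ absorbed by choosing $L=f(B)$ polynomial, the second is small in $T$. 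This substitution is what allows the velocity and magnetic fields to live in the same regularity class.
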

\section{Linear Theory}

Since the coupling of the velocity field and the magnetic field in 
\eqref{mhd.2} is semilinear, the linearized equations are decouple.  Namely, 
we consider the two linearlized equations:  one is the Stokes equations
with transmission conditions on $\Gamma$ and non-slip conditions 
on $S_\pm$ and another is the system of the heat 
equations with transmission conditions on $\Gamma$ and the perfect
wall conditions on $S_\pm$.  In the following, we set 
$\dot\Omega = \Omega_+ \cup \Omega_-$ and $\Omega
= \dot\Omega\cup \Gamma$. And, we assume that $\Gamma$ is a compact
hypersurface of $C^3$ class and that $S_\pm$ are hypersurfaces of $C^2$ class. 
\subsection{Two phase problem for the Stokes equautions}
\label{subsec:4-1}
This subsection is devoted to presenting the $L_p$-$L_q$
maximal regularity for the two phase problem of the 
Stokes equations with transmission conditions given as follows:
\begin{equation}\label{4.1.1}\left\{
\begin{aligned}
\rho\pd_t\bu - \DV\bT(\bu, \fq) &= \bff_1
&\quad &\text{in $\dot\Omega\times(0, T)$}, \\
 \dv\bu= g &= \dv\bg
&\quad &\text{in $\dot\Omega\times(0, T)$}, \\
\pd_t h +<\nabla_\Gamma h \perp \bw_\kappa> - \bn\cdot\bu &= d
&\quad &\text{on $\Gamma\times(0, T)$},\\
[[\bu]]=0, \quad 
[[\bT(\bu, \fq)\bn]] -\sigma (a h + \Delta_\Gamma h )\bn &= \bh
&\quad &\text{on $\Gamma\times(0, T)$}, \\
\bu_\pm&=0,
&\quad&\text{on $S_\pm\times(0, T)$}, \\
(\bu, h)|_{t=0}  &= (\bu_0, h_0)
&\quad&\text{in $\dot\Omega\times\Gamma$}.
\end{aligned}
\right.
\end{equation}

An assumption for equations  \eqref{4.1.1} is the following: 
\begin{itemize}\item[\thetag{a.1}]~
 $a$ is a bounded $ C^1$ functions defined on $\Omega$.
 \item[\thetag{a.2}]~
$\bw_\kappa$ is a family of $N$-vector of functions defind on 
$\Gamma$ for $\kappa \in (0, 1)$ such that 
\begin{align*}
&|\bw_\kappa(x)| \leq m_1, 
\quad |\bw_\kappa(x)- \bw_\kappa(y)| \leq m_1|x-y|^b \enskip
\text{for any $x, y \in \Gamma$}, \quad 
\|\bw_\kappa\|_{W^{2-1/r}_r(\Gamma)} \leq m_2\kappa^{-c}.
 \end{align*} 
 Here, $m_1$, $m_2$, $b$ and $c$ are some positive constants
 and $r \in (N, \infty)$.
\end{itemize}
\begin{thm}\label{thm:4.1.1} 
Let $1 < p < \infty$, $1 < q \leq r$, $2/p+1/q \not=1, 2$, and $T > 0$.
 Assume that the assumptions
\thetag{a.1} and \thetag{a.2} are satisfied.  
Then, there exists a $\gamma_0 > 0$ such that
the following assertion holds:  Let 
$\bu_0 \in B^{2(1-1/p)}_{q, p}(\dot\Omega)$ and $h_0 \in 
B^{3-1/p-1/q}_{q,p}(\Gamma)$. Let $\bff$, $g$, $\bg$, $\bh=(\bh', h_N)$,
 and $d$ appearing in the right-hand side of Eq. \eqref{4.1.1} be 
given functions 
satisfying the following conditions:
\begin{gather*}
 \bff \in L_p((0, T), L_q(\dot\Omega)^{N}), 
\quad 
e^{-\gamma t} g \in L_p(\BR, H^1_q(\dot\Omega)) \cap 
H^{1/2}_p(\BR, L_q(\dot\Omega)), \quad 
e^{-\gamma t}\bg \in H^1_p(\BR, L_q(\dot\Omega)^N), \\
e^{-\gamma t}\bh \in L_p(\BR, H^1_q(\Omega)^N) \cap 
H^{1/2}_p(\BR, L_q(\Omega)^N), \quad 
d \in L_p((0, T), W^{2-1/q}_q(\Gamma))
\end{gather*} 
for any $\gamma \geq \gamma_0$ with some $\gamma_0$.
  Assume that $\bu_0$,  $g$, and
$\bh$ satisfy the following compatibility conditions:
\begin{align}
\dv\bu_0 = g|_{t=0} \quad \text{on $\dot\Omega$}, & \label{compati:1} \\
[[\nu\bD(\bu_0)\bn]]_\tau = \bh_\tau|_{t=0} 
\quad\text{on $\Gamma$} & \quad\text{provided
$2/p + 1/q < 1$}, \label{compati:2}\\
[[\bu_0]] =0\quad\text{on $\Gamma$}, \quad
\bu_{0, \pm} = 0 \quad\text{on $S_\pm$} &\quad\text{provided
$2/p + 1/q < 2$}, \label{compati:3}
\end{align}
where $\bd_\tau = \bd - <\bd, \bn>\bn$. Then, Eq. \eqref{4.1.1} admit unique
solutions $\bu$, $\fq$, and $h$ with
\begin{align*}
\bu & \in L_p((0, T), H^2_q(\dot\Omega)^N) \cap
H^1_p((0, T), L_q(\dot\Omega)^N), \quad 
\fq  \in L_p((0, T), H^1_q(\dot\Omega) + \hat H^1_q(\Omega)),\\
h & \in L_p((0, T, W^{3-1/q}_q(\Gamma))
 \cap H^1_p((0, T), W^{2-1/q}_q(\Gamma))
\end{align*}
possessing the estimates:
\begin{align*}
&\|\pd_t\bu\|_{L_p((0, T), L_q(\dot\Omega))}
+ \|\bu\|_{L_p((0, T), H^2_q(\dot\Omega))} 
 + \|\pd_th\|_{L_p((0, \infty), W^{2-1/q}_q(\Gamma))}
+ \|h\|_{L_p((0, T), W^{3-1/q}_q(\Gamma))}\\
&\quad \leq Ce^{\gamma\kappa^{-c}T}\{\|\bu_0\|_{B^{2(1-1/p)}_{q,p}(\dot\Omega)}
+ \kappa^{-c} \|h_0\|_{B^{3-1/p-/q}_{q,p}(\Gamma)} 
+ \|\bff\|_{L_p((0, T), L_q(\dot\Omega))}\\
&\quad + \|e^{-\gamma t}g\|_{L_p(\BR, H^1_q(\dot\Omega))}
+ \|e^{-\gamma t}\bg\|_{H^1_p(\BR, L_q(\dot\Omega))}
+ \|e^{-\gamma t}\bh\|_{L_p(\BR, H^1_q(\Omega))}\\
&\quad +(1+\gamma^{1/2})
(\|e^{-\gamma t}g\|_{H^{1/2}_p(\BR, L_q(\dot\Omega))}
+ \|e^{-\gamma t}\bh\|_{H^{1/2}_p(\BR, L_q(\Omega))})
+ \|d\|_{L_p((0, T), H^2_q(\Omega))}\}
\end{align*}
for any $\gamma \geq \gamma_0$ with some constant $C > 0$ independent of 
$\gamma$. 
\end{thm}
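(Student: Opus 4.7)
The plan is to split the proof into three stages: reduction to zero initial data, maximal regularity for the ``frozen'' problem with $\bw_\kappa \equiv 0$, and a perturbative treatment of the transport term.

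First I would reduce to the case $\bu_0 = 0$, $h_0 = 0$. Given the regularity of $\bu_0 \in B^{2(1-1/p)}_{q,p}(\dot\Omega)$ and $h_0 \in B^{3-1/p-1/q}_{q,p}(\Gamma)$ together with the stated compatibility conditions, one constructs lift functions $\bU_0$ and $H_0$ in the maximal regularity class solving auxiliary inhomogeneous heat--type problems on $\dot\Omega$ and $\Gamma$. Subtracting these from $(\bu,h)$ reduces \eqref{4.1.1} to the same system with zero initial data and modified forcings whose norms are controlled by $\|\bu_0\|_{B^{2(1-1/p)}_{q,p}(\dot\Omega)} + \kappa^{-c}\|h_0\|_{B^{3-1/p-1/q}_{q,p}(\Gamma)}$ (the $\kappa^{-c}$ will be absorbed via the iteration below).

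Second, consider the principal problem with $\bw_\kappa \equiv 0$. Extending the data by zero for $t<0$ and working on $\BR$, apply the Laplace transform in $t$ with parameter $\lambda = \gamma + i\tau$, $\gamma \geq \gamma_0$, to arrive at a generalized resolvent problem for the two-phase Stokes system coupled to the kinematic equation $\lambda h - \bn\cdot\bu = d$ and the jump condition $[[\bT(\bu,\fq)\bn]] - \sigma(\Delta_\Gamma h + ah)\bn = \bh$. The construction of an $\CR$-bounded family of solution operators for this resolvent problem in a shifted sector $\Sigma_{\varepsilon,\gamma_0}$ is carried out via the usual machinery: solve the model problems in $\BR^N$, $\HS$, and a bent half-space by explicit Fourier analysis, then paste them together via a partition of unity subordinate to the covering from Proposition \ref{domain}; the pressure $\fq \in H^1_q(\dot\Omega) + \hat H^1_q(\Omega)$ is recovered through a weak Dirichlet problem, as in earlier work of Shibata and Shimizu. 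Once $\CR$-boundedness is in hand, the Weis operator-valued Fourier multiplier theorem yields the maximal regularity estimate on $\BR$ in the weighted norms $\|e^{-\gamma t}(\cdot)\|_{L_p}$ etc., and restriction to $(0,T)$ gives the desired bound for the unperturbed problem (without the $e^{\gamma\kappa^{-c}T}$ factor).

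Third, reinstate the term $\langle\nabla_\Gamma h \perp \bw_\kappa\rangle$ by moving it to the right-hand side of the kinematic equation, so that $d$ is replaced by $d - \langle\nabla_\Gamma h \perp \bw_\kappa\rangle$. By the pointwise bound and H\"older regularity in assumption \thetag{a.2} together with the product estimate in $W^{2-1/q}_q(\Gamma)$ (using $q\leq r$ and Sobolev embedding to absorb $\bw_\kappa$), one obtains
\begin{equation*}
\|\langle\nabla_\Gamma h \perp \bw_\kappa\rangle\|_{L_p((t_0,t_0+\tau),W^{2-1/q}_q(\Gamma))}
\leq C\,\tau^{1/p}\,\|\bw_\kappa\|_{W^{2-1/r}_r(\Gamma)}\,
\|h\|_{L_\infty((t_0,t_0+\tau),W^{3-1/q}_q(\Gamma))}.
\end{equation*}
Choose $\tau$ proportional to $\kappa^{c}$ so that the right side is dominated by a small fraction of the maximal regularity norm of $h$; the perturbation is then absorbed on a single interval $[t_0,t_0+\tau]$ by a contraction argument. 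Iterating over the $\sim T\kappa^{-c}$ subintervals covering $[0,T]$, each step contributes a multiplicative factor in the norm of the previous step's terminal data (which serves as initial data for the next), and exponentiating gives precisely the $e^{\gamma\kappa^{-c}T}$ in the stated estimate.

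The main obstacle is the second stage: establishing $\CR$-boundedness for the coupled resolvent problem in which the dynamic boundary condition mixes the fourth-order surface operator $\sigma\Delta_\Gamma h$ with the jump of the viscous stress and the pressure. The compatibility conditions \eqref{compati:1}--\eqref{compati:3} must be respected by the lifts in the first stage, and the Lopatinskii--Shapiro condition for the symbol arising in the half-space model problem must be verified to obtain the explicit representation whose $\CR$-bounds drive everything else. Once that core estimate is in place, the perturbation argument for the $\bw_\kappa$ transport term and the quantitative tracking of the $\kappa$-dependence are comparatively routine.
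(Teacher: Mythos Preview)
The paper does not prove Theorem~\ref{thm:4.1.1}; it simply records in the remark immediately following the statement that the result has been established in Shibata--Saito \cite{SS}. So there is no ``paper's own proof'' to compare against. Your stages~1 and~2 are a faithful summary of the machinery developed in that reference (reduction to zero data, $\CR$-bounded solution operators for the model resolvent problems, localization, and the Weis multiplier theorem), and the identification of the Lopatinskii--Shapiro verification for the coupled Stokes/kinematic/surface-tension symbol as the heart of the matter is correct.

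Stage~3, however, has a gap as written. Your displayed estimate invokes $\|h\|_{L_\infty((t_0,t_0+\tau),W^{3-1/q}_q(\Gamma))}$, but the maximal regularity class $L_p(W^{3-1/q}_q)\cap H^1_p(W^{2-1/q}_q)$ only embeds into $C([0,T],B^{3-1/p-1/q}_{q,p})$, which is strictly weaker than $L_\infty(W^{3-1/q}_q)$; so the $\tau^{1/p}$ smallness you need for the contraction is not available from that bound. If instead you keep $h$ in $L_p(W^{3-1/q}_q)$ on the right, the factor becomes $C m_2\kappa^{-c}$ with no $\tau$-smallness, and the contraction fails. The route taken in \cite{SS} avoids time-stepping altogether: the transport operator $h\mapsto\langle\nabla_\Gamma h\perp\bw_\kappa\rangle$ is treated as a perturbation at the level of the resolvent problem. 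Using the $L_\infty$ and H\"older bounds on $\bw_\kappa$ from \thetag{a.2} one first localizes and freezes coefficients to recover $\CR$-boundedness for large $|\lambda|$, and the $W^{2-1/r}_r$ bound $\leq m_2\kappa^{-c}$ then enters only through the remainder estimates, shifting the admissible sector by an amount of order $\kappa^{-c}$. The exponential weight $e^{\gamma\kappa^{-c}T}$ is therefore the growth bound of the perturbed semigroup, not the output of an iteration over $\sim T\kappa^{-c}$ subintervals.
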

\begin{remark} \thetag1
Theorem \ref{thm:4.1.1} has been proved in Shibata and Saito
\cite{SS}.  And the reason why we assume that $\Gamma$ is a compact in this paper is that
the weak Neumann problem is uniquely solvable. Namely,  if we consider 
the weak Neumann problem:
\begin{equation}\label{wdn}(\rho^{-1}\nabla u, \nabla\varphi)_{\dot\Omega}
= (\bff, \nabla\varphi)_{\dot\Omega}
\quad\text{for any $\varphi \in \hat H^1_{q'}(\Omega)$}
\end{equation}
where we have set
$$\hat H^1_{q'}(\Omega) = \{\varphi \in L_{q', {\rm loc}}(\Omega) \mid
\nabla\varphi \in L_{q'}(\Omega)\}, 
\quad q'= q/(q-1),
$$
then for any $\bff \in L_q(\Omega)^N$, problem \eqref{wdn} admits a unique
solution $u \in \hat H^1_q(\Omega)$ satisfying the estimate:
$\|\nabla u\|_{L_q(\Omega)} \leq C\|\bff\|_{L_q(\Omega)}$ with some constant
$C > 0$.  If  $\Gamma$ is unbounded, then in general
 we have to 
assume that the weak Neumann problem is uniquely solvable, 
except for a few cases like $\Gamma$ is flat, that is $\Gamma = \{x = (x_1, \ldots, x_N) \in \BR^N \mid x_N=0\}$, 
or $\Gamma$ is asymptotically flat. 
\end{remark}
\subsection{Two phase problem for the linear electro-magnetic field equations}
\label{subsec:4-2} 
This subsection is devoted to presenting the 
$L_p$-$L_q$ maximal regularity for the linear electro-magnetic field equations.  
The problem is formulated by the following equations:
\begin{equation}\label{4.2.1}\left\{ \enskip \begin{split}
\mu\pd_t\bH - \alpha^{-1}\Delta\bH = \bff 
\quad&\text{in $\dot\Omega\times(0, \infty)$}, \\
[[\alpha^{-1}\curl\bH]]\bn = \bh', \quad
[[\mu\dv\bH]] = h_N 
\quad&\text{on $\Gamma\times(0, \infty)$}, \\
[[\bH-<\bH, \bn>\bn]]= \bk', \quad 
[[\mu \bH\cdot\bn]] = k_N
\quad&\text{on $\Gamma\times(0, \infty)$}, \\
\bn_\pm\cdot\bH_\pm = 0, \quad
(\curl\bH_\pm)\bn_\pm = 0 \quad
&\text{on $S_\pm\times(0, \infty)$}, \\
\bH|_{t=0} = \bH_0\quad
&\text{in $\dot\Omega$}.
\end{split}\right.\end{equation}
\begin{thm}\label{thm:4.2.1}
Let $1 < p, q < \infty$, $2/p+1/q \not=1,2$,  and $T > 0$. 
Then, there exists a $\gamma_0$ such that the following assertion holds:
Let $\bH_0 \in B^{2(1-1/p)}_{q,p}(\dot\Omega)$ and let 
$\bff$, $\bh = (\bh', h_N)$, and $\bk = (\bk', k_N)$
be given functions appearing in the right-hand side of Eq. \eqref{4.2.1}
and satisfying the following conditions:
\begin{align*}
&\bff \in L_p((0, T), L_q(\dot\Omega)^N), \quad
e^{-\gamma t}\bh \in L_p(\BR, H^1_q(\Omega)^N) \cap 
H^{1/2}_p(\BR, L_q(\Omega)^N), \\
&e^{-\gamma t}\bk \in L_p(\BR, H^2_q(\Omega)^N) \cap 
H^1_p(\BR, L_q(\Omega)^N)
\end{align*}
for any $\gamma \geq \gamma_0$.  Assume that $\bff$, $\bh$ and 
$\bk$ satisfy the following compatibility conditions:
\begin{align}
&[[\alpha^{-1}\curl\bH_0]]\bn=\bh'|_{t=0}, \quad
[[\mu\dv\bH_0]]=h_N|_{t=0} \quad\text{on $\Gamma$}, 
\quad[[\curl\bH_{0\pm})\bn_\pm = 0
\quad\text{on $S_\pm$} \label{compati:4}
\intertext{provided $2/p + 1/q < 1$;} 
&[[\bH_0-<\bH_0, \bn>\bn]]=\bk'|_{t=0}, \quad
[[\mu\bH_0\cdot\bn]]=k_N|_{t=0} \quad\text{on $\Gamma$}, \quad
\bn_\pm\cdot\bH_{0\pm} = 0 \quad\text{on $S_\pm$} \label{compati:5}
\end{align}
provided $2/p + 1/q < 2$. 
Then, problem \eqref{4.2.1} admits a unique solution $\bH$
with
$$\bH
\in L_p((0, T), H^2_q(\dot\Omega)^N) \cap 
H^1_p((0, T), L_q(\dot\Omega)^N)$$
possessing the estimate:
\begin{align*}
&\|\pd_t\bH\|_{L_p((0, T), L_q(\dot\Omega))}
+ \|\bH\|_{L_p((0, T), H^2_q(\dot\Omega))} 
\leq Ce^{\gamma T}\{\|\bH_0\|_{B^{2(1-1/p)}_{q,p}(\dot\Omega)} 
+ \|\bff\|_{L_p(\BR, L_q(\dot\Omega))} \\
&\quad+ \|e^{-\gamma t}\bh\|_{L_p(\BR, H^1_q(\Omega))}
 + \|e^{-\gamma t}\bh\|_{H^{1/2}_p(\BR, L_q(\Omega))}
+ \|e^{-\gamma t}\bk\|_{L_p(\BR, H^2_q(\Omega))}
+ \|e^{-\gamma t}\pd_t\bk\|_{L_p(\BR, L_q(\Omega))}\}
\end{align*}
for any $\gamma \geq \gamma_0$ with some constant $C > 0$ independent of 
$\gamma$. 
\end{thm}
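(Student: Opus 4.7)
The plan is to prove Theorem \ref{thm:4.2.1} by the standard route of $L_p$-$L_q$ maximal regularity theory: reduce to a problem with zero initial data and zero $\bk$-data, establish $\CR$-boundedness of the associated generalized resolvent operator family, and invoke the operator-valued Fourier multiplier theorem of Weis to convert $\CR$-boundedness on a sector into $L_p(\BR, L_q)$ maximal regularity, then restrict to $(0,T)$ via exponential weights. First I would absorb the data $\bk$ by constructing an auxiliary $\bH^{(1)}$ with the given trace data $\bk$ on $\Gamma$; because $\bk$ has the parabolic regularity $L_p(\BR, H^2_q) \cap H^1_p(\BR, L_q)$, one can take $\bH^{(1)}$ to be the solution of a simple parabolic equation such as $(\pd_t - \Delta + 1)\bH^{(1)} = 0$ in $\dot\Omega$ whose trace on $\Gamma$ realizes $\bk$, preserving the stated norms. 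Similarly $\bH_0$ is absorbed by running the homogeneous analytic semigroup generated by the elliptic part.

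After reduction I take the Laplace transform and consider, for $\lambda$ in a sector $\Sigma_{\epsilon,\gamma_0} = \{\lambda\in\BC \mid |\arg\lambda| < \pi-\epsilon,\ |\lambda| > \gamma_0\}$, the resolvent system
\begin{align*}
\lambda\mu\bH - \alpha^{-1}\Delta\bH &= \bff &\quad &\text{in $\dot\Omega$},\\
[[\alpha^{-1}\curl\bH]]\bn = \bh',\quad [[\mu\dv\bH]] &= h_N &\quad &\text{on $\Gamma$},\\
[[\bH - <\bH,\bn>\bn]] = 0,\quad [[\mu\bH\cdot\bn]] &= 0 &\quad &\text{on $\Gamma$},\\
\bn_\pm\cdot\bH_\pm = 0,\quad (\curl\bH_\pm)\bn_\pm &= 0 &\quad &\text{on $S_\pm$}.
\end{align*}
The objective is to construct an $\CR$-bounded solution operator family $\{\CS(\lambda)\}_{\lambda\in\Sigma_{\epsilon,\gamma_0}}$ sending $(\bff,\bh)$ to $\bH$, with $\CR$-bounds for $\bH$, $\lambda^{1/2}\nabla\bH$, $\nabla^2\bH$, and $\lambda\bH$ controlled by the natural data norm. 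Using the partition of unity from Proposition \ref{domain} together with $C^\infty$ cutoffs supported near $S_\pm$, this reduces to three model problems: the whole space $\BR^N$ problem (trivial by Fourier multipliers), a half-space model near $S_\pm$ with perfect-wall conditions $\bn\cdot\bH = 0$ and $(\curl\bH)\bn = 0$, and the two-half-space configuration $\HSp\cup\HSn$ joined at $\{x_N=0\}$ carrying the transmission conditions. Each is solved by partial Fourier transform in the tangential variables, giving an ODE system in $x_N$, and $\CR$-boundedness of the resulting solution formulas follows from the Mikhlin-type multiplier theorem for operator-valued symbols holomorphic on a sector. Weis's theorem then yields maximal regularity on $\BR$ with the $e^{\gamma T}$ factor, and reassembling the localizations (treating commutator terms as lower-order perturbations, absorbed by taking $\gamma_0$ large) completes the proof.

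The main obstacle is the model two-phase problem across $\{x_N=0\}$. After the $\bk$-reduction the tangential-$\bH$ jump and normal $\mu\bH\cdot\bn$ jump are homogeneous, while the remaining $N$ interface conditions — the $N-1$ tangential components of $[[\alpha^{-1}\curl\bH]]\bn$ together with the jump $[[\mu\dv\bH]]$ — combine with decay at $x_N = \pm\infty$ to determine the $2N$ constants parametrizing solutions of the ODE system. The hard part is verifying that the associated Lopatinski-type determinant is bounded away from zero uniformly on $\Sigma_{\epsilon,\gamma_0}$ in $|\xi'|$, and that the resulting explicit solution formulas produce symbols satisfying the $\CR$-boundedness multiplier estimates of the form $|\pd_\lambda^j\pd_{\xi'}^\beta p(\lambda,\xi')| \le C_{j,\beta}(|\lambda|^{1/2}+|\xi'|)^{-|\beta|}|\lambda|^{-j}$ with appropriate weights. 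This analysis is exactly parallel to the one carried out by Frolova and Shibata \cite{FS1}, and since the linearization decouples the magnetic equation from the velocity system, Theorem \ref{thm:4.2.1} is essentially a restatement of their magnetic-part maximal regularity result.
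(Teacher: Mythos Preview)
Your proposal is correct and matches the paper's treatment: the paper does not give a proof of Theorem~\ref{thm:4.2.1} at all but simply cites Frolova and Shibata~\cite{FS1}, and your sketch of the $\CR$-boundedness\,/\,Weis multiplier argument with localization to the whole-space, half-space, and two-phase model problems is precisely the method of that reference, as you yourself observe in your final sentence.
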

\begin{remark} Theorem \ref{thm:4.2.1} was proved by Froloba and Shibata \cite{FS1} 
under the assumption that $\Omega$ is a uniformly $C^3$ domain.  Of course, if  
$\Gamma$ is a compact hypersurface of $C^3$ class, then $\Omega$ is a 
uniform $C^3$ domain. 
\end{remark}

\section{ Estimates of non-linear terms}\label{sec:4}

First of all, we give an iteration scheme to prove Theorem \ref{thm:main} by the 
Banach fixed point theorem. 
Given $h$ satisfying \eqref{height:1}, 
let $H_h$  be a unique solution of equation \eqref{eq:harmonic}
satisfying \eqref{height:2} and \eqref{1.3.2}.  Let  $\bU_T$ be 
 an underlying space  defined by setting 
\begin{equation}\label{under:0}\begin{aligned}
\bU_T &= \{(\bu, \bG, h) \mid (\bu, \bG) 
\in H^1_p((0, T), L_q(\dot\Omega)^{2N}) \cap 
L_p((0, T), H^2_q(\dot\Omega)^{2N}), \\
&h \in L_p((0, T), W^{3-1/q}_q(\Gamma)) \cap 
H^1_p((0, T), W^{2-1/q}_q(\Gamma)), \\
&(\bu, \bG,  h)|_{t=0} = (\bu_0, \bG_0, h_0) \quad\text{in $\dot\Omega\times\dot\Omega
\times\Gamma$}, \quad E_T(\bu, \bG, h) \leq L, \quad
\|H_h\|_{L_\infty((0, T), H^1_\infty(\Omega))} \leq \delta\},
\end{aligned}\end{equation}
where we have set  
\begin{equation}\label{norm:2}\begin{aligned}
E_T(\bu, \bG, h)& = E^1_T(\bu) + E^1_T(\bG)+  E^2_T(h) + \|\pd_th\|_{L_\infty((0, T), 
W^{1-1/q}_q(\Gamma))}, \\
E^1_T(\bw) & =\|\bw\|_{L_p((0, T), H^2_q(\dot\Omega))}
+ \|\pd_t\bw\|_{L_p((0, T), L_q(\dot\Omega))} \quad \bw \in \{\bu, \bG\}, \\
E^2_T(h) & = \|h\|_{L_p((0, T), W^{3-1/q}_q(\Gamma))} 
+ \|\pd_th\|_{L_p((0, T), W^{2-1/q}_q(\Gamma))}.
\end{aligned}\end{equation}
For initial data $\bu_0$, $\bG_0$, and $h_0$, we assume that
\begin{equation}\label{initial:4.1}\|\bu_0\|_{B^{2(1-1/p)}_{q,p}(\dot\Omega)} \leq B, 
\quad \|\bG_0\|_{B^{2(1-1/p)}_{q,p}(\dot\Omega)} \leq B,
\quad \|h_0\|_{B^{3-1/p-1/q}_{q,p}(\Gamma)} \leq \epsilon.
\end{equation}
Here, $B$ is a given positive number. Since we mainly consider the case where $\bu_0$ and 
$\bG_0$ are large, we may assume that $B > 1$ in the following. And  
we shall choose $L>0$ large enough and $\epsilon > 0$ small 
enough eventually, and so we may assume that $0 < \epsilon < 1 < L$.
Given $(\bu, \bG, h) \in \bU_T$, let $(\bv, \fq, \rho)$ be solutions of the equations:
\begin{equation}\label{eq:4.1}\left\{
\begin{aligned}
\rho\pd_t\bv - \DV\bT(\bv, \fq) &= \bff_1(\bu, \bG, H_h),
&\quad &\text{in $\dot\Omega\times(0, T)$}, \\
 \dv\bv= g(\bu, H_h) &= \dv\bg(\bu, H_h)
&\quad &\text{in $\dot\Omega\times(0, T)$}, \\
\pd_t \rho +<\nabla_\Gamma \rho \perp \bu_\kappa> - \bn\cdot\bv_+ &= d(\bu, H_h)
&\quad &\text{on $\Gamma\times(0, T)$},\\
[[\bv]]=0, \quad 
[[\bT(\bv, \fq)\bn]] -\sigma (\Delta_\Gamma \rho + a\rho)\bn &= \bh_1(\bu, \bG, H_h),
&\quad &\text{on $\Gamma\times(0, T)$}, \\
\bv_\pm&=0
&\quad&\text{on $S_\pm\times(0, T)$}, \\
(\bv, \rho)|_{t=0}  &= (\bu_0,  h_0)
&\quad&\text{in $\dot\Omega\times \Gamma$}.
\end{aligned}
\right.
\end{equation}
And,  let $\bH$ be a solution of the equations: 
\begin{equation}\label{eq:4.2}\left\{
\begin{aligned}
\mu\pd_t\bH - \alpha^{-1}\Delta\bH &=\bff_2(\bu, \bG, H_h)
&\quad &\text{in $\dot\Omega\times(0, T)$}, \\
[[\alpha^{-1}\curl\bH]]\bn= \bh_2(\bu, \bG, H_h), \quad [[\mu\dv\bH]] 
&=h_3(\bu, \bG, H_h)
&\quad &\text{on $\Gamma\times(0, T)$}, \\
[[\mu\bH\cdot\bn]]=k_1(\bG, H_\rho),
\quad [[\bH-<\bH, \bn>\bn]]&=\bk_2(\bG, H_\rho)
&\quad &\text{on $\Gamma\times(0, T)$}, \\
\quad \bn_\pm\cdot\bH_\pm =0, \quad
(\curl\bH_\pm)\bn_\pm &= 0
&\quad&\text{on $S_\pm\times(0, T)$}, \\
\bH|_{t=0}  &= \bG_0
&\quad&\text{in $\dot\Omega$}.
\end{aligned}
\right.
\end{equation}
Notice that to define $\bH$ we use not only $H_h$ but also $H_\rho$  unlike  
Padula and Solonnikov \cite{PS} to avoid their technial assumption that 
the velocity field is slightly regular than the magnetic field.

In this section, we shall show the estimates of the nonlinear terms 
appearing in the right sides of equations \eqref{eq:4.1}
and  \eqref{eq:4.2}.  Since $(\bu, \bG, h) \in \bU_T$,  we have 
\begin{align}\label{under:1}
E_T(\bu, \bG, h) &\leq L, \\
\|H_h\|_{L_\infty((0, T), H^1_\infty(\Omega))} &\leq \delta. \label{4.5.4}
\end{align}
Below, we assume that $2 < p < \infty$, $N < q < \infty$ and
$2/p + N/q < 1$.  
We use the following inequalities which follows from Sobolev's inequality.
\begin{equation}\label{4.5.1}\begin{aligned}
\|f\|_{L_\infty(\dot\Omega)} & \leq C\|f\|_{H^1_q(\dot\Omega)}, \\ 
\|fg\|_{H^1_q(\dot\Omega)} & \leq C\|f\|_{H^1_q(\dot\Omega)}
\|g\|_{H^1_q(\dot\Omega)},  \\
\|fg\|_{H^2_q(\dot\Omega)} & \leq C(\|f\|_{H^2_q(\dot\Omega)}
\|g\|_{H^1_q(\dot\Omega)}
+ \|f\|_{H^1_q(\dot\Omega)}
\|g\|_{H^2_q(\dot\Omega)}),  \\
\|fg\|_{W^{1-1/q}_q(\Gamma)} & \leq C\|f\|_{W^{1-1/q}_q(\Gamma)}
\|g\|_{W^{1-1/q}_q(\Gamma)},  \\
\|fg\|_{W^{2-1/q}_q(\Gamma)} & \leq C(\|f\|_{W^{2-1/q}_q(\Gamma)}
\|g\|_{W^{1-1/q}_q(\Gamma)}
+ \|f\|_{W^{1-1/q}_q(\Gamma)}
\|g\|_{W^{2-1/q}_q(\Gamma)}).
\end{aligned}\end{equation}
For any $C^k$ function, $f(u)$,  defined for $|u| \leq \sigma$, 
we consider a composite function $f(u(x))$, and then for $N < q < \infty$, 
we have
\begin{equation}\label{eq:4.5.1}\begin{aligned}
\|\nabla(f(u)vw)\|_{L_q(\dot\Omega)} & \leq C\|(f, f')\|_{L_\infty}
(1+\|\nabla u\|_{L_q(\dot\Omega)})\|v\|_{H^1_q(\dot\Omega)}
\|w\|_{H^1_q(\dot\Omega)} ; \\
\|\nabla^2(f(u)vw)\|_{L_q(\dot\Omega)} & \leq 
C\|(f, f', f'')\|_{L_\infty}(\|\nabla v\|_{H^1_q(\dot\Omega)}\|w\|_{H^1_q(\dot\Omega)}
+ \|v\|_{H^1_q(\dot\Omega)}\|\nabla w\|_{H^1_q(\dot\Omega)}\\
&\quad + \|\nabla u\|_{H^1_q(\dot\Omega)}(1 + \|\nabla u\|_{L_q(\dot\Omega))})
\|v\|_{H^1_q(\dot\Omega)}\|w\|_{H^1_q(\dot\Omega)}),
\end{aligned}\end{equation}
provided that $\|u\|_{L_\infty(\dot\Omega)} \leq \sigma$. 
We  use the following estimate of the time trace proved by 
a real interpolation theorem:
\begin{align}
&\|\bw\|_{L_\infty((0, T), B^{2(1-1/p)}_{q,p}(\dot\Omega))}\nonumber \\
&\quad \leq C\{\|\bw_0\|_{B^{2(1-1/p)}_{q,p}(\dot\Omega)}
+ \|\bw\|_{L_p((0, T), H^2_q(\dot\Omega))}
+ \|\pd_t\bw\|_{L_p((0, T), L_q(\dot\Omega))}\}
\leq C(B+L)\label{4.5.12} \\
\intertext{for $\bw \in \{\bu, \bG\}$, }
&\|h\|_{L_\infty((0, T), B^{3-1/p-1/q}_{q,p}(\Gamma))}\nonumber\\
&\quad
\leq C\{\|\rho_0\|_{B^{3-1/p-1/q}_{q,p}(\Gamma)} + 
\|h\|_{L_p((0, T), W^{3-1/q}_q(\Gamma))} + \|\pd_th\|_{L_p((0, T), W^{2-1/q}_q(\Gamma))}\}
\leq CL, \label{4.5.13}.
\end{align}
And we have 
\begin{equation}\label{smallh:1}\|h\|_{L_\infty((0, T), W^{2-1/q}_q(\Gamma))}
\leq \|\rho_0\|_{W^{2-1/q}_q(\Gamma)} + T^{1/p'}\|\pd_th\|_{L_p((0, T),
W^{2-1/q}_q(\Gamma))}
\leq \epsilon + T^{1/{p'}}L..
\end{equation}
In what follows, we assume that $0 < \epsilon = T = \kappa < 1$  and $1 \leq B, L$.  
In particular, $\epsilon + LT^{1/p'} \leq T + LT^{1/p'} \leq 2LT^{p1/p'}$.  In what follows,
we assume that $LT^{1/p'} \leq 1$, and so by \eqref{smallh:1},  
\begin{equation}\|h\|_{L_\infty((0, T), W^{2-1/q}_q(\Gamma))} \leq LT^{1/p'},
\quad \|h\|_{L_\infty((0, T), W^{2-1/q}_q(\Gamma))} \leq 1. 
\label{4.5.14}
\end{equation}

We first estimate $\bff_1(\bu, \bG, H_h)$. In view of \eqref{maineq:1*}, we may write
\begin{equation}\label{non:f1}
\bff_1(\bu, \bG, H_h) = \bV_{\bff_1}(\cdot, \bar\nabla H_h)(\bar\nabla H_h\otimes(\pd_t\bu, \nabla^2\bu)
+ \pd_tH_h\otimes \nabla\bu + \bu\otimes\nabla\bu
+ \bar\nabla^2H_h\otimes\nabla\bu + \bG\otimes\nabla\bG),
\end{equation}
where $\bV_{\bff_1}(y, \bK)$ is a matrix of bounded functions defined on $\Omega\times
\{\bK \in \BR^{N+1} \mid |\bK| \leq \delta\}$. 
Applying \eqref{1.3.2}, 
\eqref{4.5.4} and \eqref{4.5.1}, 
 we have
\begin{align*}
\|\bff_1(\bu, \bG, H_h)\|_{L_q(\dot\Omega)} &\leq 
C\{\|h\|_{W^{2-1/q}_q(\Gamma)}\|(\pd_t\bu, \nabla^2\bu)\|_{L_q(\dot\Omega)}
+ \|\pd_th\|_{W^{1-1/q}_q(\Gamma)}\|\nabla\bu\|_{L_q(\dot\Omega)}
+\|\bu\|_{H^1_q(\dot\Omega)}^2 \\
&\quad 
 + \|h\|_{W^{2-1/q}_q(\dot\Omega)}\|\nabla\bu\|_{H^1_q(\Omega)}
+ \|\bG\|_{H^1_q(\dot\Omega)}^2\}.
\end{align*}
Here and in the following, we write
$$\|f\|_{L_p((a, b))} = \Bigl(\int^b_a|f(t)|^p\,dt\Bigr)^{1/p}.
$$
For a  maximal regularity term, $f$, and a lower order term, g,  we estimate
$$\|fg\|_{L_p((0, T))} \leq \|f\|_{L_p((0, T))}\|g\|_{L_\infty((0, T))}.$$
And only in the lower order term, $g$, case, we estimate
$$\|g\|_{L_p((0, T))} \leq T^{1/p}\|g\|_{L_\infty((0, T))}.
$$
Thus, using \eqref{1.3.2}, we have 
\begin{align}
&\|\bff_1(\bu, \bG, H_h)\|_{L_p((0, T), L_q(\dot\Omega))}
= \|\|\bff_1(\bu, \bG, H_h)\|_{L_q(\dot\Omega)}\|_{L_p((0, T))} \nonumber \\
&\quad  \leq C\{
\|h\|_{L_\infty((0, T), W^{2-1/q}_q(\Gamma))}
(\|\pd_t\bu\|_{L_p((0, T), L_q(\dot\Omega))}
+ \|\bu\|_{L_p((0, T), H^2_q(\dot\Omega))})\label{est:f} \\
&\quad 
+ T^{1/p}(\|\pd_th\|_{L_\infty((0, T), W^{1-1/q}_q(\Gamma))}\|\bu\|_{L_\infty((0, T), H^1_q(\dot\Omega))}
+\|\bu\|_{L_\infty((0, T), H^1_q(\dot\Omega))}^2+ \|\bG\|_{L_\infty((0, T), H^1_q(\dot\Omega))}^2)\}.
\nonumber\end{align}
By \eqref{1.3.2}, \eqref{under:1}, \eqref{initial:4.1}, \eqref{4.5.12}, and \eqref{4.5.13}, we have 
\begin{equation}\label{4.5.10}\begin{aligned}
&\|\bw\|_{L_\infty((0, T), H^1_q(\dot\Omega)} \leq C(B+L)
\quad\text{for $\bw \in \{\bu, \bG\}$}, \\
&\|\pd_t\bw\|_{L_p((0, T), L_q(\dot\Omega))}
+ \|\bw\|_{L_p((0, T), H^2_q(\dot\Omega))} \leq  L\quad\text{for $\bw \in \{\bu, \bG\}$},
\\
&\|\pd_th\|_{L_\infty((0, T), W^{1-1/q}_q(\Gamma))}  \leq CE_T(\bu, \bG, h)
\leq CL, \\
&\|\pd_th\|_{L_p((0, T), W^{2-1/q}_q(\Gamma))}  + \|h\|_{L_p((0, T), W^{3-1/q}_q(\Gamma))} \leq L. 
\end{aligned}\end{equation}
Thus, by \eqref{4.5.14} and \eqref{4.5.10}
$$
\|\bff_1(\bu, \bG, H_h)\|_{L_p((0, T), L_q(\dot\Omega))}
\leq C\{T^{1/p'}L^2+ T^{1/p}(L+B)^2\}.
$$
Since $1/p' > 1/p$ as follows from $1 < p' =p/(p-1) < 2 < p < \infty$, we have 
\begin{equation}\label{4.5.8}
\|\bff_1(\bu, \bG, H_h)\|_{L_p((0, T), L_q(\dot\Omega))} \leq CT^{1/p}(B+L)^2.
\end{equation}

We next estimate $d(\bu, H_h)$ given in \eqref{kinema:2}. 
We shall prove that
\begin{equation}\label{4.5.24}\begin{aligned}
\|d(\bu, H_h)\|_{L_\infty((0, T), W^{1-1/q}_q(\Gamma))}
& \leq C}L(B+L)T^{1/{p'}; \\
\|d(\bu, H_h)\|_{L_p((0, T), W^{2-1/q}_q(\Gamma))}
& \leq C_sL^2(B+L)T^{\frac{s}{p'(1+s)}},
\end{aligned}\end{equation}
where  $s$ is a constant for which $s \in (0, 1-2/p)$.  Here and in the following, $C_s$ is 
a generic constant depending on $s$, whose value may change from line to line.

In fact, 
by \eqref{1.3.2},  \eqref{4.5.4}, \eqref{4.5.1}, and \eqref{eq:4.5.1}, 
\begin{equation}\label{non:d1}\begin{aligned}
&\|d(\bu, H_h)\|_{W^{1-1/q}_q(\Gamma)}
\leq C\{\|h\|_{W^{2-1/q}_q(\Gamma)}\|\bu_+-\bu_\kappa\|_{H^1_q(\Omega_+)}
\\
&\quad +(1 + \|h\|_{W^{2-1/q}_q(\Gamma)})
\|h\|_{W^{2-1/q}_q(\Gamma)}^2(\|\bu\|_{H^1_q(\Omega_+)}
+ \|\pd_th\|_{W^{1-1/q}_q(\Gamma)})\}; \\
&\|d(\bu, H_h)\|_{W^{2-1/q}_q(\Gamma)} \leq C\{\|h\|_{W^{3-1/q}_q(\Gamma)}
\|\bu-\bu_\kappa\|_{H^1_q(\Omega_+)}
+\|h\|_{W^{2-1/q}_q(\Gamma)}\|\bu-\bu_\kappa\|_{H^2_q(\Omega_+)} 
\\
& \quad +\|h\|_{W^{3-1/q}_q(\Gamma)}(1 + \|h\|_{W^{2-1/q}_q(\Gamma)})
\|h\|_{W^{2-1/q}_q(\Gamma)}^2(\|\bu\|_{H^1_q(\Omega_+)}
+ \|\pd_th\|_{W^{1-1/q}_q(\Gamma)}) \\
&\quad 
+(\|\bu\|_{H^2_q(\Omega_+)} + \|\pd_th\|_{W^{2-1/q}_q(\Gamma)})\|h\|_{W^{2-1/q}_q(\Gamma)}^2
\\
&\quad +(\|\bu\|_{H^1_q(\Omega_+)} + \|\pd_th\|_{W^{1-1/q}_q(\Gamma)})
\|h\|_{W^{3-1/q}_q(\Gamma)}\|h\|_{W^{2-1/q}_q(\Gamma)}\}.
\end{aligned}
\end{equation}
By \eqref{4.4.3}, we have
\begin{equation}\label{Sol:0}\begin{aligned}
\|\bu_+-\bu_\kappa\|_{L_\infty((0, T), H^1_q(\Omega_+)}
&\leq C(\|\bu\|_{L_\infty((0, T), H^1_q(\dot\Omega))} + \|\bu_0\|_{L_\infty((0, T), B^{2(1-1/p)}_{q,p}(\dot\Omega)})
\\
&\leq C(L+B), 
\end{aligned}\end{equation}
and so by \eqref{4.5.14} and \eqref{4.5.10} 
\begin{align*}
\|d(\bu, H_h)\|_{L_\infty((0, T), W^{1-1/q}_q(\Gamma))}
&\leq C\{ T^{1/p'}L(B+L) + 
(1 +  T^{1/p'}L)( T^{1/p'}L)^2(B+L)\}\\
& \leq CT^{1/p'}L(B+L),
\end{align*}
which shows the first inequality in \eqref{4.5.24}.

To prove the second inequality in \eqref{4.5.24}, we use the estimates:
\begin{equation}\label{eq:d.3}\begin{aligned}
\|\bu-\bu_\kappa\|_{L_p((0, T), H^2_q(\Omega_+))} &\leq C(L+B), \\
\|\bu-\bu_\kappa\|_{L_\infty((0, T), H^1_q(\Omega_+))}
&\leq C_sT^{\frac{s}{p'(1+s)}}(L+B).
\end{aligned}\end{equation}
Here, $s$ is a fixed constant for which $0 < s < 1-2/p$. 
In fact, 
by \eqref{4.4.3} and \eqref{initial:4.1} 
$$\|\bu - \bu_\kappa\|_{H^2_q(\Omega_+)}
\leq C(\|\bu(\cdot, t)\|_{H^2_q(\dot\Omega)} + \kappa^{-1/p}B),
$$
and so by \eqref{under:1} and \eqref{4.5.10} we have
$$
\|\bu-\bu_\kappa\|_{L_p((0, T), H^2_q(\Omega_+)}
\leq C(L + T^{1/p}\kappa^{-1/p}B) \leq C(L+B), 
$$
because we have taken $\kappa=T$. This shows the first inequality in \eqref{eq:d.3}. 
For $t \in (0, T)$
by \eqref{4.4.2}, \eqref{initial:4.1} and \eqref{under:1} 
\begin{align*}
\|\bu-\bu_\kappa\|_{L_q(\Omega_+)}
& \leq \|\bu - \bu_0\|_{L_q(\Omega+)} + \|\bu_0 - \bu_\kappa\|_{L_q(\Omega_+)} \\
& \leq \int^T_0\|\bu_t(\cdot, t)\|_{L_q(\Omega_+)}\,dt
+ \frac{1}{\kappa}\int^\kappa_0\|T_0(s)\tilde\bu_0^+-\bu_0\|_{L_q(\Omega_+)}\,ds\\
& \leq T^{1/{p'}}L + \frac{1}{\kappa}\int^\kappa_0\Bigl(\int^s_0
\|\pd_rT_0(r)\tilde\bu_0^+\|_{L_q(\Omega_+)}\,dr\Bigr)\,ds 
\\
& \leq T^{1/{p'}}L + C\kappa^{1/{p'}}B \leq C(L+B)T^{1/p'}.
\end{align*}
By real interpolation, 
\begin{align*}
\|\bu-\bu_\kappa\|_{H^1_q(\Omega_+)} 
&\leq C_s\|\bu-\bu_\kappa\|^{s/(1+s)}_{L_q(\Omega_+)}
\|\bu - \bu_\kappa\|_{W^{1+s}_q(\Omega_+)}^{1/(1+s)}
\leq C_s\|\bu-\bu_\kappa\|^{s/(1+s)}_{L_q(\Omega_+)}
\|\bu-\bu_\kappa\|_{B^{2(1-1/p)}_{q,p}(\dot\Omega)}^{1/(1+s)}
\end{align*}
for any $s \in (0, 1-2/p))$, which, combined with \eqref{4.4.3}
and \eqref{4.5.12},  yields the second inequality in \eqref{eq:d.3}. 

Applying \eqref{4.5.14}, \eqref{4.5.10}, and \eqref{eq:d.3}
to the second inequality in \eqref{non:d1} yields that 
\begin{align*}
\|d(\bu, &H_h)\|_{L_p((0, T), W^{2-1/q}_q(\Gamma))}\\
 &\leq C\{L(B+L)T^{1/p'} + C_sT^{\frac{s}{p'(1+s)}}L(L+B) \\
&\qquad +LLT^{1/p'}(L+B) + LLT^{1/p'}
+ (L+B)LLt^{1/p'}\} \\
&  \leq CT^{\frac{s}{p'(1+s)}}L^2(L+B), 
\end{align*}
which proves the second inequlity in \eqref{4.5.24}.

We now estimate $g(\bu, H_h)$, $\bg(\bu, H_h)$ and 
$\bh_1(\bu, \bG,  H_h)$ given in \eqref{g:1} and \eqref{g.3*}, respectively.
We have to extend them to the whole time line $\BR$. For this purpose,
we first define operators which have nice behaviour at infinity in time 
and whose initial values are $\bw_{0\pm}
\in B^{2(1-1/p)}_{q,p}(\Omega_\pm)$ 
for $\bw \in \{\bu_0, \bG_0\}$. 
Let $\tilde\bw_{0\pm} \in B^{2(1-1/p)}_{q,p}(\BR^N)$ be extensions of 
$\CE_\mp(\bw_{0\pm})$ to $\BR^N$ such that 
$$\tilde\bw_{0\pm} =\CE_\mp( \bw_{0\pm})\quad\text{in $\Omega$}, \quad
\|\tilde\bw_{0\pm}\|_{B^{2(1-1/p)}_{q,p}(\BR^N)} \leq 
C\|\bw_{0\pm}\|_{B^{2(1-1/p)}_{q,p}(\Omega_\pm)} \leq CB.
$$
Let $\gamma_0$ be a large positive number appearing in Theorem \ref{thm:4.1.1}
and Theorem \ref{thm:4.2.1} and we fix $\gamma_1$ in such a way that $\gamma_1 > \gamma_0$. 
Let $T_{v}(t)\bw_{0\pm}$ be defined by setting
$$T_{v}(t)\bw_{0\pm} = e^{-(2\gamma_1-\Delta)t}\tilde\bw_{0\pm}
= \CF^{-1}[e^{-(|\xi|^2+2\gamma_1)t}\CF[\tilde\bw_{0\pm}](\xi)].$$
In particular, $T_{v}(0)\bw_{0\pm} = \bw_{0\pm}$ in $\Omega_\pm$,
$T_{v}(0)\bw_{0\pm} = \CE_\mp(\bw_{0\pm})$ in $\Omega, $ and 
\begin{equation}\label{4.5.26} 
\|e^{\gamma_1 t}T_{v}(t)\bw_{0\pm}\|_{H^1_p((0, \infty), L_q(\Omega))} 
+ \|e^{\gamma_1 t}T_{v}(t)\bw_{0\pm}\|_{L_p((0, \infty), H^2_q(\Omega))} 
\leq CB.
\end{equation}
We also construct a similar operator for $H_h$. Let $\bW$, $P$, and $\Xi$
be solutions of the equations:
\begin{alignat*}2
\rho\pd_t\bW + \lambda_0\bW - \DV\bT(\bW, P)=0, \quad
\dv\bW = 0&&\quad&\text{in $\dot\Omega\times(0, \infty)$}, \\
\pd_t\Xi + \lambda_0\Xi - \bW\cdot\bn = 0&&\quad&\text{on $\Gamma\times(0, \infty)$}, \\
[[\bT(\bW, P)\bn]] - \sigma(\Delta_\Gamma\Xi )\bn=0,
\quad[[\bW]] = 0&&\quad&\text{on $\Gamma\times(0, \infty)$}, \\
\bW_\pm = 0& &\quad&\text{on $S_\pm\times(0, \infty)$}, \\
(\bW, \Xi)|_{t=0} = (0, h_0)&&\quad&\text{in $\dot\Omega\times\Gamma$}.
\end{alignat*}
For large $\lambda_0 > 0$ we know the unique existence of $\bW$, $P$, and $\Xi$ with
\begin{align*}
\bW &\in H^1_p((0, \infty), L_q(\dot\Omega)^N) \cap L_p((0, \infty), H^2_q(\dot\Omega)^N),
\\
\Xi &\in H^1_p((0, \infty), W^{2-1/q}_q(\Gamma)) 
\cap L_p((0, \infty), W^{3-1/q}_q(\Gamma)).
\end{align*}
possessing the estimate:
\begin{align*}
&\|e^{\gamma_1 t}\bW\|_{H^1_p((0, \infty), L_q(\dot\Omega))} 
+ \|e^{\gamma_1 t}\bW\|_{L_p((0, \infty), H^2_q(\dot\Omega))} 
+ \|e^{\gamma_1 t}\bW\|_{L_\infty((0, \infty), W^{2(1-1/p)}_{q,p}(\dot\Omega))}\\
&\quad + \|e^{\gamma_1 t}\Xi\|_{H^1_p((0, \infty), W^{2-1/q}_q(\Gamma))} 
+ \|e^{\gamma_1 t}\Xi\|_{L_p((0, \infty), W^{3-1/q}_q(\Gamma))} 
+\|e^{\gamma_1 t}\Xi\|_{H^1_\infty(0, \infty), W^{1-1/q}_q(\Gamma))} \\
&\qquad \leq C\|h_0\|_{B^{3-1/p-1/q}_{q,p}(\Gamma)} \leq C\epsilon.
\end{align*}
Let $T_h(t)h_0 = H_\Xi(x, t)$, where $H_\Xi$ is a unique solution of
\eqref{eq:harmonic} with $h = \Xi$, and then by \eqref{1.3.2} we have
\begin{equation}\label{4.5.27}\begin{aligned}
\|e^{\gamma_1 t}T_h(\cdot)h_0&\|_{H^1_p((0, \infty), H^2_q(\Omega))}
+ \|e^{\gamma_1 t}T(\cdot)h_0\|_{L_p((0, \infty), H^3_q(\Omega))} \\
&+\|e^{\gamma_1 t}T_h(\cdot)h_0\|_{L_\infty((0, \infty), B^{3-1/p}_{q,p}(\Omega))}
+\|e^{\gamma_1 t}T_h(\cdot)h_0\|_{H^1_\infty((0, \infty), H^1_q(\Omega))}
\leq C\epsilon.
\end{aligned}\end{equation}
In what follows, a generic constant $C$ depends on $\gamma_1$ when we use \eqref{4.5.26}
and \eqref{4.5.27}, but $\gamma_1$ is eventually fixed in such a way that the estimates given in 
Theorem \ref{thm:4.1.1} and Theorem \ref{thm:4.2.1} hold, and so 
we do not mention the dependence on $\gamma_1$. 

Given function, $f(t)$, defined on $(0, T)$, an extension, $e_T[f]$, of $f$ is defined
by setting
$$e_T[f] = \begin{cases} 0  \quad&\text{for $t < 0$}, \\
f(t) \quad&\text{for $0 < t < T$}, \\
f(2T-t) \quad&\text{for $T < t < 2T$}, \\
0 \quad&\text{for $t > 2T$}.
\end{cases}$$
Obviously, $e_T[f] = f$ for $t \in (0, T)$ and $e_T[f]$ vanishes for $t \not\in (0, 2T)$.
Moreover, if $f|_{t=0}=0$, then 
\begin{equation}\label{4.5.29}
\pd_te_T[f] = \begin{cases} 0  \quad&\text{for $t < 0$}, \\
\pd_tf(t) \quad&\text{for $0 < t < T$}, \\
-(\pd_tf)(2T-t) \quad&\text{for $T < t < 2T$}, \\
0 \quad&\text{for $t > 2T$}.
\end{cases}\end{equation}
If $f \in L_p((0, T), X)$ with some Banach space $X$ and $f|_{t=0} = 0$, then 
\begin{align*}
\|e_T[f]\|_{L_p(\BR, X)} & \leq 2\|f\|_{L_p((0, T), X)} \quad(1\leq p \leq \infty), \\
\|e_T[f]\|_{L_p(\BR, X)} & \leq 2T^{1/p}\|f\|_{L_\infty((0, T), X)} \quad(1\leq p < \infty).
\end{align*}
Moreover, if $f|_{t=0}=0$, then $e_T[f](t) = \int^t_0\pd_te_T[f]\,ds$, and so
$$\|e_T[f]\|_{L_\infty(\BR, X)} \leq 2(2T)^{1/p'}\|f\|_{L_p((0, T), X)}
\quad(1 < p < \infty, \enskip p' = p/(p-1)), 
$$
because $e_T[f]$ vanishes for $t \not\in (0, 2T)$.

Let $\psi \in C^\infty(\BR)$ which equals one for $t > -1$ and zero for
$t < -2$. Under these preparations, for $\bw \in \{\bu, \bG\}$ and 
$H_h$, we  define the extensions
$\CE_1[\bw_\pm]$, $\CE_1[tr[\bw]]$.   and $\CE_2[H_h]$ by letting 
\begin{equation}\label{4.5.30} \begin{aligned}
\CE_1[\bw_\pm] &= e_T[\bu_\pm-T_v(t)\bw_{0\pm}] + \psi(t)T_v(|t|)\bw_{0\pm},
\\
\CE_1[tr[\bw]] & = e_T[tr[\bw] - T_v(t)tr[\bw_0]]
+ \psi(t)T_v(|t|)tr[\bw_0], \\
\CE_2[H_h] & = e_T[H_h - T_h(t)h_0] + \psi(t)T_h(|t|)h_0.
\end{aligned}\end{equation}
Here, we have set $tr[\bw_0] = \tilde\bw_{0+} - \tilde \bw_{0-}$. 
Notice that $\bw_\pm-T_v(t)\bw_{0\pm} = 0$ for $t=0$,
$tr[\bw] - T_v(t)tr[\bw_0]$ for $t=0$,  and 
$H_h - T_h(t)h_0 = 0$ for $t=0$.  Obviously, 
\begin{equation}\label{4.5.31}
\CE_1[\bu_\pm] =\bu_\pm, \quad 
\CE_1[tr[\bw]] = tr[\bw], \quad \CE_2[H_h] = H_h
\quad\text{for $0 < t < T$}.
\end{equation}
By \eqref{4.5.10},  \eqref{4.5.26} and \eqref{4.5.27}, we have 
\begin{equation}\label{est:1}\begin{aligned}
&\|e^{-\gamma t}\CE_1[\bw]\|_{L_p(\BR, H^2_q(\dot\Omega))}
+ \|e^{-\gamma t}\pd_t\CE_1[\bw]\|_{L_p(\BR, L_q(\dot\Omega))}
\leq C(e^{2(\gamma-\gamma_1)}B + L), \\
&\|e^{-\gamma t}\CE_1[\bw]\|_{L_\infty(\BR, H^1_q(\dot\Omega))}
\leq Ce^{2(\gamma-\gamma_1)}B + C_s(B+L)T^{\frac{s}{p'(1+s)}}, \\
&\|\CE_2[H_h]\|_{L_p(\BR, H^3_q(\dot\Omega))} 
+ \|\CE_2[H_h]\|_{H^1_p(\BR, H^2_q(\dot\Omega))} 
+ \|\pd_t\CE_2[H_h]\|_{L_\infty(\BR, H^1_q(\dot\Omega))}
\leq C(\epsilon +L) \leq 2CL,\\
&\|\CE_2[H_h]\|_{L_\infty(\BR, H^2_q(\dot\Omega))}
 \leq C(\epsilon  + LT^{1/p'})
 \leq 2CLT^{1/p'}.
\end{aligned}\end{equation}
where $\bw \in \{\bu, \bG, tr[\bu], tr[\bG]\}$. 
In fact, the first and  third  inequalites in \eqref{est:1} follow from \eqref{4.5.26}, \eqref{4.5.27} and 
\eqref{under:1}. To prove the second inequality in \eqref{4.5.27}, we observe that
\begin{align*}
\|e_T[\bw-T_v(t)\bw_0]\|_{L_q(\dot\Omega)}
& \leq \int^t_0\|\pd_se_T[\bw-T_v(t)\bw_0]\|_{L_q(\dot\Omega)}\,ds \\
&\leq T^{1/{p'}}(\|\pd_t\bw\|_{L_p((0, T), L_q(\dot\Omega))}
+ \|\pd_tT_v(t)\bw_0\|_{L_p(0, 2T), L_q(\dot\Omega))} 
\leq T^{1/p'}(L+B);\\
\|e_T[\bw-T_v(t)\bw_0]\|_{W^{1+s}_q(\dot\Omega)}
&
\leq C_s\||e_T[\bw-T_v(t)\bw_0]\|_{B^{2(1-1/p)}_{q,p}(\dot\Omega)}
\leq C_s(B+L),
\end{align*}
for any $s \in (0, 1-2/p)$. Thus, using the interpolation
inequality: $\|v\|_{H^1_q(\dot\Omega)}
\leq C_s\|v\|_{L_q(\dot\Omega)}^{s/(1+s)}\|v\|_{W^{1+s}_q(\dot\Omega)}^{1/(1+s)}$, 
we have the second inequality in \eqref{est:1}. 
By \eqref{4.5.27} and \eqref{4.5.14}, 
\begin{align*}
\|\psi(t)T_h(|t|)h_0\|_{L_\infty(\BR, H^1_\infty(\Omega))}
&\leq C\epsilon, \\
\|e_T[H_h - T_h(t)h_0]\|_{L_\infty(\BR, H^1_\infty(\Omega))}
&\leq C(\|H_h\|_{L_\infty((0, T), H^2_q(\Omega))} 
+ \|T(\cdot)h_0\|_{L_\infty((0, \infty), H^2_q(\Omega))})\\
& \leq C(\epsilon + T^{1/{p'}}L), 
\end{align*}
and so we have the last inequality in \eqref{est:1}. 

Choosing $\epsilon>0$ and $T > 0$ small enough in the last inequality in
\eqref{est:1}, we may 
assume that 
\begin{equation}\label{4.5.34}
\sup_{t\in \BR} \|\CE_2[H_h]\|_{H^1_\infty(\Omega)} \leq \delta.
\end{equation}
And also, 
\begin{equation}\label{4.5.34*}
\|\CE_2[H_h]\|_{L_\infty(\BR, H^2_q(\dot\Omega))} \leq CLT^{1/p'}, \quad 
\|\CE_2[H_h]\|_{L_\infty(\BR, H^2_q(\dot\Omega))} \leq 1.
\end{equation}

To estimate $H^{1/2}_p(\BR, L_q(\dot\Omega))$ norm,   we use
the following  lemmata.
\begin{lem}\label{lem:4.5.3} Let $1 < p < \infty$ and $N < q < \infty$.
Let 
\begin{align*}
f  \in L_\infty(\BR, H^1_q(\dot\Omega)) \cap H^1_\infty(\BR, L_q(\dot\Omega)), \quad 
g \in H^{1/2}_p(\BR, H^1_q(\dot\Omega)) \cap L_p(\BR, H^1_q(\dot\Omega)).
\end{align*}
Then, we have
\begin{align*}
&\|fg\|_{H^{1/2}_p(\BR, L_q(\dot\Omega))} + \|fg\|_{L_p(\BR, H^1_q(\dot\Omega))}\\
&
\leq C(\|\pd_tf\|_{L_\infty(\BR, L_q(\dot\Omega))}+ \|f\|_{L_\infty(\BR, H^1_q(\dot\Omega))})^{1/2}
\|f\|_{L_\infty(\BR, H^1_q(\dot\Omega))}^{1/2}
(\|g\|_{H^{1/2}_p(\BR, L_q(\dot\Omega))}
+ \|g\|_{L_p(\BR, H^1_q(\dot\Omega))}).
\end{align*}
\end{lem}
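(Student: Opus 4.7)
My plan is to separate the two pieces on the left-hand side. For the $L_p(\BR, H^1_q(\dot\Omega))$-part, the Sobolev embedding $H^1_q(\dot\Omega) \hookrightarrow L_\infty(\dot\Omega)$ (available because $q > N$) gives the algebra inequality $\|fg\|_{H^1_q(\dot\Omega)} \leq C\|f\|_{H^1_q(\dot\Omega)}\|g\|_{H^1_q(\dot\Omega)}$ already recorded in \eqref{4.5.1}; taking the $L_p$-norm in $t$ while measuring $f$ in $L_\infty(\BR, H^1_q(\dot\Omega))$ yields a bound that is in fact stronger than the one claimed.

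The real content is the $H^{1/2}_p(\BR, L_q(\dot\Omega))$-piece. I would identify
\[
H^{1/2}_p(\BR, L_q(\dot\Omega)) = [L_p(\BR, L_q(\dot\Omega)),\,H^1_p(\BR, L_q(\dot\Omega))]_{1/2}
\]
via the standard complex-interpolation characterisation of Bessel-potential spaces. Regarding $g \mapsto M_f g := fg$ as a linear operator, I would establish two endpoint bounds: (i) $\|M_f\|_{L_p(L_q)\to L_p(L_q)} \leq C\|f\|_{L_\infty(\BR, H^1_q)}$, obtained from the pointwise-in-$t$ Sobolev bound $\|f(t,\cdot)\|_{L_\infty} \leq C\|f(t,\cdot)\|_{H^1_q}$; and (ii) $\|M_f\|_{\CY_1 \to H^1_p(L_q)} \leq C\bigl(\|f\|_{L_\infty(\BR, H^1_q)} + \|\pd_tf\|_{L_\infty(\BR, L_q)}\bigr)$, where $\CY_1 := H^1_p(\BR, L_q(\dot\Omega)) \cap L_p(\BR, H^1_q(\dot\Omega))$. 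Here the Leibniz rule in $t$ produces the term $f\pd_tg$, bounded by $\|f\|_{L_\infty}\|\pd_tg\|_{L_p(L_q)}$, and the cross term $(\pd_tf)g$, bounded by $\|\pd_tf\|_{L_\infty(\BR, L_q)}\|g\|_{L_p(\BR, L_\infty)} \leq C\|\pd_tf\|_{L_\infty(\BR, L_q)}\|g\|_{L_p(\BR, H^1_q)}$.

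Interpolating these two bounds at $\theta=1/2$ via Stein's complex interpolation theorem gives
\[
\|M_f\|_{[L_p(L_q),\,\CY_1]_{1/2} \to H^{1/2}_p(L_q)} \leq C\|f\|_{L_\infty(\BR, H^1_q)}^{1/2}\bigl(\|f\|_{L_\infty(\BR, H^1_q)} + \|\pd_tf\|_{L_\infty(\BR, L_q)}\bigr)^{1/2},
\]
which is precisely the $A^{1/2}B^{1/2}$ factor appearing on the right-hand side of the claim. To close the argument one must verify
\[
\|g\|_{[L_p(L_q),\,\CY_1]_{1/2}} \leq C\bigl(\|g\|_{H^{1/2}_p(\BR, L_q)} + \|g\|_{L_p(\BR, H^1_q)}\bigr).
\]
This follows by a retraction-coretraction argument: embed $\CY_1$ as a closed subspace of $L_p(\BR, L_q)^{1+N}$ via $u \mapsto (u, \pd_tu, \nabla u)$, interpolate component-wise, and use the Sobolev embedding $H^1_q(\dot\Omega) \hookrightarrow H^{1/2}_q(\dot\Omega)$ to dominate the spatial component.

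The main obstacle is precisely this last identification of the $g$-side interpolation space, since the intersection $\CY_1$ is not one of the canonical endpoints of Bessel-potential scales. A self-contained alternative that avoids interpolation entirely is a direct Kato-Ponce fractional Leibniz estimate for $(1-\pd_t^2)^{1/4}(fg)$, handled term by term with $H^1_q(\dot\Omega) \hookrightarrow L_\infty(\dot\Omega)$ absorbing the spatial integrability of the cross terms. Either route produces the asserted estimate.
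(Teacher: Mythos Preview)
Your strategy is the same as the paper's: view $M_fg=fg$ as a linear operator, bound it at the two endpoints $L_p(\BR,L_q(\dot\Omega))\to L_p(\BR,L_q(\dot\Omega))$ and $\CY_1\to\CY_1$ (the paper keeps the symmetric target $\CY_1$ rather than your $H^1_p(L_q)$, but this is immaterial), and interpolate at $\theta=1/2$. The paper writes the key step exactly as the identity you isolate as ``the main obstacle'':
\[
[L_p(\BR,L_q(\dot\Omega)),\,H^1_p(\BR,L_q(\dot\Omega))\cap L_p(\BR,H^1_q(\dot\Omega))]_{1/2}=H^{1/2}_p(\BR,L_q(\dot\Omega))\cap L_p(\BR,H^{1/2}_q(\dot\Omega)),
\]
and then simply invokes $H^1_q(\dot\Omega)\hookrightarrow H^{1/2}_q(\dot\Omega)$ on the $g$-side, precisely as you suggest. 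So your plan and the paper's proof coincide.

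One correction concerns your sketch of the interpolation identity. The map $u\mapsto(u,\pd_tu,\nabla u)$ only sends $\CY_1$ into $L_p(L_q)^{N+2}$; it is not defined on the other endpoint $L_p(L_q)$, so there is no \emph{single} coretraction $J$ acting on the couple $(L_p(L_q),\CY_1)$, and the retraction--coretraction argument as you describe it does not go through. The identity is true, but its proof requires a genuine interpolation-of-intersections argument --- e.g.\ Sobolevskii's theorem for $[A_0\cap B_0,\,A_1\cap B_1]_\theta$, or the observation that $\Lambda_t=(1-\pd_t^2)^{1/2}$ and $\Lambda_x=(1-\Delta)^{1/2}$ are commuting sectorial operators with bounded imaginary powers on $L_p(L_q)$, so that $[L_p(L_q),D(\Lambda_t)\cap D(\Lambda_x)]_{1/2}=D(\Lambda_t^{1/2})\cap D(\Lambda_x^{1/2})$. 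The paper does not prove this either; it just states it as a fact. Your fallback via a fractional Leibniz rule in $t$ would also work, but note that a straightforward Kato--Ponce estimate yields the sum $\|\pd_tf\|_{L_\infty(L_q)}+\|f\|_{L_\infty(H^1_q)}$ rather than the interpolated $A^{1/2}B^{1/2}$ factor; recovering the latter still requires an interpolation step.
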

\begin{proof} To prove Lemma \ref{lem:4.5.3}, we use the fact that
$$H^{1/2}_p(\BR, L_q(\dot\Omega)) \cap L_p(\BR, H^{1/2}_q(\dot\Omega))
=(L_p(\BR, L_q(\dot\Omega)), 
H^1_p(\BR, L_q(\dot\Omega)) \cap L_p(\BR, H^1_q(\dot\Omega)))_{[1/2]}
$$
where $(\cdot, \cdot)_{[1/2]}$ denotes a  complex interpolation functor of order
$1/2$. We have
\begin{align*}
&\|fg\|_{H^1_p(\BR, L_q(\dot\Omega))} + 
\|fg\|_{L_p(\BR, H^1_q(\dot\Omega))}\\
&\quad \leq C(\|\pd_tf\|_{L_\infty(\BR, L_q(\dot\Omega))}
\|g\|_{L_p((\BR, H^1_q(\dot\Omega))}
+ \|f\|_{L_\infty(\BR, H^1_q(\dot\Omega))}
\|g\|_{H^1_p(\BR, L_q(\dot\Omega))})\\
&\quad \leq C(\|\pd_tf\|_{L_\infty(\BR, L_q(\dot\Omega))}
+ \|f\|_{L_\infty(\BR, H^1_q(\dot\Omega))})
(\|g\|_{L_p((\BR, H^1_q(\dot\Omega))}
+\|g\|_{H^1_p(\BR, L_q(\dot\Omega))}).
\end{align*}
Moreover, 
$$\|fg\|_{L_p(\BR, L_q(\dot\Omega))}
\leq C\|f\|_{L_p(\BR, H^1_q(\dot\Omega))}\|g\|_{L_p(\BR, L_q(\dot\Omega))}.
$$
Thus, by complex interpolation, we have
\begin{align*}
&\|fg\|_{H^{1/2}_p(\BR, L_q(\dot\Omega))} + \|fg\|_{L_p(\BR, H^{1/2}_q(\dot\Omega))}\\
&
\leq C(\|\pd_tf\|_{L_\infty(\BR, L_q(\dot\Omega))}+ \|f\|_{L_\infty(\BR, H^1_q(\dot\Omega))})^{1/2}
\|f\|_{L_\infty(\BR, H^1_q(\dot\Omega))}^{1/2}
(\|g\|_{H^{1/2}_p(\BR, L_q(\dot\Omega))}
+ \|g\|_{L_p(\BR, H^{1/2}_q(\dot\Omega))}).
\end{align*}
Moreover, we have
$$\|fg\|_{L_p(\BR, H^1_q(\dot\Omega))} \leq C\|f\|_{L_\infty(\BR, H^1_q(\dot\Omega))}
\|g\|_{L_p(\BR, H^1_q(\dot\Omega))}.
$$
Thus, combining these two inequalities give the required estimate, which completes
 the proof of Lemma \ref{lem:4.5.3}. 
\end{proof}

\begin{lem}\label{lem:4.5.4}
Let $1 < p, q < \infty$.  Then, 
$$H^1_p(\BR, L_q(\dot\Omega)) \cap L_p(\BR, H^2_q(\dot\Omega))
\subset H^{1/2}_p(\BR, H^1_q(\dot\Omega))$$
and 
$$\|u\|_{H^{1/2}_p(\BR, H^1_q(\dot\Omega))} \leq C(\|u\|_{L_p(\BR, H^2_q(\dot\Omega))}
+ \|\pd_tu\|_{L_p(\BR, L_q(\dot\Omega))}).
$$
\end{lem}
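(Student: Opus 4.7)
My strategy is to reduce to the whole-space case and then deduce the embedding from a Fourier-multiplier estimate resting on the elementary inequality $(1+\tau^2)^{1/4}(1+|\xi|^2)^{1/2} \leq \tfrac12\bigl((1+\tau^2)^{1/2} + 1 + |\xi|^2\bigr)$.

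First, because $\|u\|_{H^s_q(\dot\Omega)} = \|u_+\|_{H^s_q(\Omega_+)} + \|u_-\|_{H^s_q(\Omega_-)}$ and each of $\Omega_\pm$ is (even) of $C^2$ class, Stein's total extension provides operators $E_\pm \colon H^k_q(\Omega_\pm) \to H^k_q(\BR^N)$, simultaneously bounded for $k = 0, 1, 2$. Applying $E_\pm$ pointwise in $t$ produces $\tilde u_\pm \in H^1_p(\BR, L_q(\BR^N)) \cap L_p(\BR, H^2_q(\BR^N))$ with the relevant norms controlled by those of $u$. Since restriction $H^s_q(\BR^N) \to H^s_q(\Omega_\pm)$ is continuous for $s \in [0, 2]$, proving the inequality on $\BR^N$ suffices.

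Second, on $\BR^N$ I set $v := \bigl(1 + |\pd_t| + (1 - \Delta_x)\bigr)\tilde u$, where $|\pd_t| := \CF_t^{-1}|\tau|\CF_t$. The symbol $i\tau/|\tau|$ is a classical Mikhlin multiplier, and since $L_q(\BR^N)$ is a UMD space, $|\pd_t|$ is controlled by $\pd_t$ on $L_p(\BR, L_q(\BR^N))$, giving $\|v\|_{L_p(\BR, L_q(\BR^N))} \leq C\bigl(\|\pd_t\tilde u\|_{L_p(\BR, L_q)} + \|\tilde u\|_{L_p(\BR, H^2_q)}\bigr)$. Define $\Lambda_t^{1/2} := \CF_t^{-1}(1+\tau^2)^{1/4}\CF_t$; then $\Lambda_t^{1/2}(1 - \Delta_x)^{1/2}\tilde u = M(D_t, D_x) v$ with
$$M(\tau, \xi) := \frac{(1+\tau^2)^{1/4}(1+|\xi|^2)^{1/2}}{2 + |\tau| + |\xi|^2}.$$
The pointwise bound displayed in the first paragraph yields $\|M\|_\infty \leq C$, and a direct calculation (splitting into the regimes $|\tau|^{1/2} \geq 1 + |\xi|$ and $|\tau|^{1/2} \leq 1 + |\xi|$) shows that $M$ satisfies the Mikhlin–H\"ormander condition $|\tau^\alpha \xi^\beta \pd_\tau^\alpha \pd_\xi^\beta M(\tau, \xi)| \leq C_{\alpha,\beta}$ on $(\BR \setminus \{0\}) \times (\BR^N \setminus \{0\})$. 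By the Mikhlin multiplier theorem in two groups of variables (equivalently, the operator-valued Weis theorem on the UMD space $L_q(\BR^N)$), $M(D_t, D_x)$ is bounded on $L_p(\BR, L_q(\BR^N))$, whence
$$\|\Lambda_t^{1/2}(1-\Delta_x)^{1/2}\tilde u\|_{L_p(\BR, L_q(\BR^N))} \leq C\|v\|_{L_p(\BR, L_q(\BR^N))}.$$
The analogous argument with the simpler multiplier $(1+\tau^2)^{1/4}/(2 + |\tau| + |\xi|^2)$ controls $\|\Lambda_t^{1/2}\tilde u\|_{L_p(\BR, L_q)}$; summing the two contributions yields the claimed $H^{1/2}_p(\BR, H^1_q(\BR^N))$ bound for $\tilde u$ and hence, by restriction, for $u$.

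The main technical point is the uniform verification of the Mikhlin condition for $M$ and its derivatives; nothing is deep, but the differentiation bookkeeping is where a careful proof must spend effort. As a conceptually cleaner alternative I could instead invoke the Dore–Venni / Kalton–Weis mixed-derivative theorem applied to the commuting sectorial operators $A = 1 + |\pd_t|$ and $B = 1 - \Delta_x$ on the UMD space $L_p(\BR, L_q(\BR^N))$, each of which admits a bounded $H^\infty$-calculus; the resulting embedding $D(A) \cap D(B) \hookrightarrow D(A^{1/2}B^{1/2})$ with the natural quantitative bound is then precisely the claim of the lemma.
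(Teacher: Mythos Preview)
Your argument is correct. The paper does not actually prove this lemma; it simply cites \cite[Proposition~1]{S2}, so there is no ``paper's own proof'' to compare against beyond that reference. The route you take --- reduction to $\BR^N$ via Stein extension and then the parabolic mixed-derivative estimate, obtained either through an explicit Mikhlin multiplier bound on
\[
M(\tau,\xi)=\frac{(1+\tau^2)^{1/4}(1+|\xi|^2)^{1/2}}{2+|\tau|+|\xi|^2}
\]
or, equivalently, by invoking the Dore--Venni/Kalton--Weis mixed-derivative theorem for the commuting operators $1+|\pd_t|$ and $1-\Delta_x$ on the UMD space $L_p(\BR,L_q(\BR^N))$ --- is exactly the standard mechanism behind such embeddings and is what one finds in the cited source. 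Two small remarks: (i) since $(1-\Delta_x)^{1/2}$ is an isomorphism $H^1_q\to L_q$, the single multiplier $M$ already delivers the full $H^{1/2}_p(\BR,H^1_q)$ norm, so your second ``simpler multiplier'' step is harmless but redundant; (ii) the lack of smoothness of $|\tau|$ at $\tau=0$ is no obstacle because the Mikhlin condition $|\tau^k\pd_\tau^k M|\le C$ only needs to hold on $\tau\neq 0$, and a one-line check confirms it there.
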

\begin{proof} For a proof, see Shibata \cite[Proposition 1]{S2}. 
\end{proof}

We now estimate $\bh_1(\bu, \bG, H_h)$.  In view of \eqref{g.3*},
we define an extension of $\bh_1(\bu, \bG, H_h)$ to the whole time interval
$\BR$ by setting $\tilde\bh_1(\bu, \bG, H_h) = A^1 + A^2 + A^3$ with
\begin{equation}\label{eq:4.1*}\begin{aligned}
A^1 & = \bV^1_\bh(\cdot, \bar\nabla \CE_2[H_h])\bar\nabla \CE_2[H_h]
\otimes\nabla\CE_1[tr[\bu]], \\
A^2 & = a(\cdot)\CE_1[tr[\bG]]\otimes \CE_1[tr[\bG]]+ \bV^2_\bh(\cdot, \bar\nabla \CE_2[H_h])
\bar\nabla\CE_2[H_h])\otimes \CE_1[tr[\bG]]\otimes \CE_1[tr[\bG]], \\
A^3&= \bV_s(\cdot, \bar\nabla \CE_2[H_h])\bar\nabla \CE_2[H_h]\otimes
\bar\nabla^2\CE_2[H_h].
\end{aligned}\end{equation}
Obviously, $\tilde\bh_1(\bu, \bG, H_h) = \bh_1(\bu, \bG, H_h)$ for $t \in (0, T)$.
To estimate $A^1$, for notational simplicity we set 
$\CV^1 = \bV^1_\bh(\cdot, \bar\nabla \CE_2[H_h])\bar\nabla \CE_2[H_h]$.  
By \eqref{4.5.34} and \eqref{est:1}, 
\begin{align*}
\|\pd_t\CV^1\|_{L_\infty(\BR, L_q(\dot\Omega))}& \leq C\|\pd_t\CE_2[H_h]\|_{L_\infty(\BR, H^1_q(\dot\Omega))}
\leq CL, \\
\|\CV^1\|_{L_\infty(\BR, H^1_q(\dot\Omega))}&
\leq C\|\CE_2[H_h]\|_{L_\infty(\BR, H^2_q(\dot\Omega))} \leq CLT^{1/p'},
\end{align*}
and so, we have
\begin{equation}\label{est:2}
(\|\pd_t\CV^1\|_{L_\infty(\BR,  L_q(\dot\Omega))}
+ \|\CV^1\|_{L_\infty(\BR, H^1_q(\dot\Omega))})^{1/2}
 \|\CV^1\|_{L_\infty(\BR, H^1_q(\dot\Omega))}^{1/2} \leq CLT^{1/(2p')}.
 \end{equation}
 Thus, by \eqref{est:1}, \eqref{est:2}, Lemma \ref{lem:4.5.3} and Lemma \ref{lem:4.5.4}, we have
 \begin{equation}\label{4.5.44}\begin{aligned}
& \|e^{-\gamma t}A^1\|_{H^{1/2}_p(\BR, L_q(\dot\Omega)}
 + \|e^{-\gamma t}A^1\|_{L_p(\BR, H^1_q(\dot\Omega))} \\
 & \quad \leq CLT^{1/(2p')}(\|e^{-\gamma t}\nabla\CE_1[tr[\bu]]\|_{H^{1/2}_p(\BR, L_q(\dot\Omega))}
 + \|e^{-\gamma t} \nabla\CE_1[tr[\bu]]\|_{L_p(\BR, H^1_q(\dot\Omega))}\\
 &\quad  \leq CT^{1/(2p')}L(e^{2(\gamma-\gamma_1)}B + L).
 \end{aligned}\end{equation}

Since 
\begin{align*}
\|e^{-\gamma t}\bar\nabla^2\CE_2[H_h]\|_{H^{1/2}_p(\BR, L_q(\dot\Omega))}
&\leq \|e^{-\gamma t}\CE_2[H_h]\|_{H^1_p(\BR, H^2_q(\dot\Omega))}
\leq C(e^{2(\gamma-\gamma_1)}\epsilon  + L) ; \\
\|e^{-\gamma t}\bar\nabla^2\CE_2[H_h]\|_{L_p(\BR, H^1_q(\dot\Omega))}
&\leq C\|e^{-\gamma t}\CE_2[H_h]\|_{L_p(\BR, H^3_q(\dot\Omega))}
\leq C(e^{2(\gamma-\gamma_1)}\epsilon + L) 
\end{align*}
as follows from \eqref{under:1}, the third formula of \eqref{4.5.30} and 
\eqref{4.5.27}, employing the same argument as in proving \eqref{4.5.44},
we have 
\begin{equation}\label{4.5.45}
\|e^{-\gamma t} A^3 \|_{H^{1/2}_p(\BR, L_q(\dot\Omega))}
+ \|e^{-\gamma t} A^3\|_{L_p(\BR, H^1_q(\dot\Omega))}
\leq  CT^{1/(2p')}L(e^{2(\gamma-\gamma_1)}\epsilon + L).
\end{equation}

We now estimate $A^2$. For this purpose we use the following
esitmate which follows from  complex interpolation theory: 
\begin{equation}\label{complex:1}
\|f\|_{H^{1/2}_p(\BR, L_q(\dot\Omega))}
\leq C\|f\|_{H^1_p(\BR, L_q(\dot\Omega))}^{1/2}
\|f\|_{L_p(\BR, L_q(\dot\Omega))}^{1/2}.
\end{equation}
Let 
\begin{align*}
A^2_1 & = \CE_1[tr[\bG]]\otimes  \CE_1[tr[\bG]], \quad 
A^2_2  = \bV^2_\bh(\cdot, \bar\nabla \CE_2[H_h])
\bar\nabla\CE_2[H_h]\otimes A^2_1. 
\end{align*}
We further divide $A^2_1$ into $A^2_1 = \sum_{j=1}^4 A^2_{1j}$ with 
$A^2_1 = A^2_{11} + A^2_{12} + A^2_{21} + A^2_{22}$, where 
\allowdisplaybreaks
\begin{align*}
A^2_{11} &= \CA_1\otimes \CA_1, \quad A^2_{12} = \CA_1\otimes \CA_2, 
\quad A^2_{21} = \CA_2\otimes \CA_1, \quad A^2_{22} = \CA_2\otimes \CA_2, \\
\CA_1 & = 
\psi(t)T_v(|t|)tr[\bG_{0}], \quad 
\CA_2  =  e_T[tr[\bG] - T_v(t)tr[\bG_{0}]]. 
\end{align*}
Using \eqref{4.5.1}, we have
\begin{equation}\label{semi:4.-1}\begin{aligned}
\|e^{-\gamma t}A^2_{11}\|_{H^i_p(\BR, L_q(\dot\Omega))} & \leq C\|e^{-\gamma t}\CA_1\|_{H^i_p(\BR, L_q(\dot\Omega))}
\|\CA_1\|_{L_\infty(\BR, H^1_q(\dot\Omega))} \quad(i=0,1), \\
\|e^{-\gamma t}A^2_{12}\|_{H^1_p(\BR, L_q(\dot\Omega))} & 
\leq C(\|\CA_1\|_{H^1_p(\BR, L_q(\dot\Omega))}
\|\CA_2\|_{L_\infty(\BR, H^1_q(\dot\Omega))} \\
&\qquad +
\|\CA_1\|_{L_\infty(\BR, H^1_q(\dot\Omega))}
\|\CA_2\|_{H^1_p(\BR, L_q(\dot\Omega))}),\\
\|e^{-\gamma t}A^2_{12}\|_{L_p(\BR, L_q(\dot\Omega))} & \leq C\|\CA_1\|_{L_\infty(\BR, H^1_q(\dot\Omega))}
\|\CA_2\|_{L_p(\BR, L_q(\dot\Omega))}, \\
\|e^{-\gamma t}A^2_{22}\|_{H^i_p(\BR, L_q(\dot\Omega))}
& \leq C\|\CA_2\|_{H^i_p(\BR, L_q(\dot\Omega))}
\|\CA_2\|_{L_\infty(\BR, H^1_q(\dot\Omega))} \quad(i=0,1)
\end{aligned}\end{equation}
where we have set $H^0_p = L_p$ and used the fact that $|e^{-\gamma t}\CA_2|
\leq |\CA_2|$, which follows from $\CA_2=0$ for $t \not\in (0, 2T)$.

By \eqref{trace:4}, \eqref{4.5.26}, \eqref{4.5.29}, \eqref{under:1}, and \eqref{4.5.10}, 
we have
\allowdisplaybreaks
\begin{align}
&\|e^{-\gamma t}\CA_1\|_{H^i_p(\BR, L_q(\dot\Omega))} \leq Ce^{2(\gamma-\gamma_1)}B; \quad 
\|\CA_1\|_{H^i_p(\BR, L_q(\dot\Omega))} \leq CB;  
\quad \|\CA_1\|_{L_\infty(\BR, H^1_q(\dot\Omega))} \leq CB; 
\nonumber\\
&\|\CA_2\|_{H^1_p(\BR, L_q(\dot\Omega))}
 \leq C(L+B);  \quad \|\CA_2\|_{L_\infty(\BR, H^1_q(\dot\Omega))} \leq C(L+B); 
 \label{semi:4.0} \\
&\|\CA_2\|_{L_p(\BR, L_q(\dot\Omega))}
\leq CT^{1/p}(\|tr[\bG]\|_{L_\infty((0, T), L_q(\dot\Omega))}
+ \|T_v(\cdot)tr[\bG_0]\|_{L_\infty((0, T), L_q(\dot\Omega))}) 
\leq C(L+B)T^{1/p}. \nonumber
\end{align}
Notice that $A^2_{12}$ and $A^2_{21}$ have the same estimate.  In view of \eqref{complex:1}, 
combining estimates in \eqref{semi:4.-1} and \eqref{semi:4.0} gives that 
\begin{equation}\label{semi:4.1}\begin{aligned}
\|e^{-\gamma t}A^2_1\|_{H^{1/2}_p(\BR, L_q(\dot\Omega))}
&\leq 
C(e^{2(\gamma-\gamma_1)}B^2 +(B(L+B))^{1/2}(B(L+B)T^{1/p})^{1/2} 
+(B+L)^2T^{1/(2p)})
\\
&\leq C(e^{2(\gamma-\gamma_1)}B^2 + (L+B)^2T^{1/(2p)}). 
\end{aligned}\end{equation}

And also, by \eqref{4.5.1}
\begin{align*}
\|e^{-\gamma t}A^2_1\|_{L_p(\BR, H^1_q(\dot\Omega))}
& \leq C\{\|e^{-\gamma t}\CA_1\|_{L_p(\BR, H^1_q(\dot\Omega))}
\|\CA_1\|_{L_\infty(\BR, H^1_q(\dot\Omega))} 
+ 2\|\CA_1\|_{L_\infty(\BR, H^1_q(\dot\Omega))}\|\CA_2\|_{L_p(\BR, H^1_q(\dot\Omega))}
\\
&\quad+ \|\CA_2\|_{L_p(\BR, H^1_q(\dot\Omega))}\|\CA_2\|_{L_\infty(\BR, H^1_q(\dot\Omega))}\}.
\end{align*}
By \eqref{4.5.26}, \eqref{4.5.29}, \eqref{under:1}, and \eqref{4.5.10} we have
\begin{equation}\label{semi:4.-1}\begin{aligned}
\|e^{-\gamma t} \CA_1\|_{L_p(\BR, H^1_q(\dot\Omega))} &\leq Ce^{2(\gamma-\gamma_1)}B; \\
\|\CA_2\|_{L_p(\BR, H^1_q(\dot\Omega))}
& \leq T^{1/p}\|\CA_2\|_{L_\infty((0, 2T), H^1_q(\dot\Omega))} \\
&\leq T^{1/p}(\|tr\bG\|_{L_\infty((0, T), H^1_q(\dot\Omega))}
+ \|T_v(\cdot)tr[\bG_0]\|_{L_\infty((0, T), H^1_q(\dot\Omega))}\\
&\leq CT^{1/p}(L+B).
\end{aligned}\end{equation}
Using  \eqref{semi:4.0} and \eqref{semi:4.-1}, we have  
 \begin{equation}\label{semi:4.2}
\|e^{-\gamma t}A^2_1\|_{L_p(\BR, H^1_q(\dot\Omega))}
\leq C(e^{2(\gamma-\gamma_1)}B^2 + (L+B)^2T^{1/p}).
\end{equation}

Moeover, by Lemma \ref{lem:4.5.3} and \eqref{est:2}, we have
\begin{align*}
\|e^{-\gamma t}A^2_2\|_{H^{1/2}_p(\BR, L_q(\dot\Omega))} + 
\|A^2_2\|_{L_p(\BR, H^1_q(\dot\Omega))} \leq
CLT^{1/(2p')}(\|A^2_1\|_{H^{1/2}_p(\BR, L_q(\dot\Omega))}
+ \|A^2_1\|_{L_p(\BR, H^1_q(\dot\Omega))}),
\end{align*}
which, combined with \eqref{semi:4.1} and \eqref{semi:4.2}, 
gives that
\begin{equation}\label{semi:4.6} \begin{aligned}
&\|e^{-\gamma t}A^2\|_{H^{1/2}_p(\BR, L_q(\dot\Omega))} + 
\|e^{-\gamma t}A^2\|_{L_p(\BR, H^1_q(\dot\Omega))} \\
&\quad \leq C(e^{2(\gamma-\gamma_1)}B^2 + (L+B)^2T^{1/(2p)} + 
LT^{1/(2p')}(e^{2(\gamma-\gamma_1)}B^2 + (L+B)^2T^{1/(2p)})) \\
& \quad \leq C(e^{2(\gamma-\gamma_1)}B^2 + L(B^2+L^2)T^{1/(2p)}),
\end{aligned}\end{equation}
where we assume that $LT^{1/2p'} \leq 1$. 

Combining \eqref{4.5.44}, \eqref{4.5.45}, and \eqref{semi:4.6} yields that
\begin{equation}\label{semi:4.7}\begin{aligned}
&\|\tilde\bh_1(\bu, \bG, H_h)\|_{H^{1/2}_p(\BR, L_q(\dot\Omega))}
+ \|\tilde\bh_1(\bu, \bG, H_h)\|_{Lp(\BR, H^1_q(\dot\Omega))}
\\
&\quad 
\leq C(e^{2(\gamma-\gamma_1)}B^2 +L(L^2+B^2+e^{2(\gamma-\gamma_1)}B)T^{1/(2p)}),
\end{aligned}\end{equation}
where we have used the facts: $1/(2p) < 1/(2p')$,  $\epsilon e^{2(\gamma-\gamma_1)}
< Be^{2(\gamma-\gamma_1)}$, and $L \leq L^2$.

We finally consider $g(\bu, H_h)$ and $\bg(\bu, H_h)$. 
In view of \eqref{div:3*}, we set
\begin{equation}\label{4.5.32}
\tilde g(\bu, H_h)  = \CG_1(\bar\nabla \CE_2[H_h])\bar\nabla \CE_2[H_h]\otimes\nabla
\CE_1[\bu], 
\quad
\tilde\bg(\bu, H_h) = 
\CG_2(\bar\nabla \CE_2[H_h])\bar\nabla \CE_2[H_h]\otimes
\CE_1[\bu].
\end{equation}
Here, we have set $\CE_1[\bu] = \CE_1[\bu_\pm]$ for $x \in \Omega_\pm$. 
Obviously, 
$$\tilde g(\bu, H_h) = g(\bu, H_h), \quad \tilde\bg(\bu, H_h) = \bg(\bu, H_h)
\quad\text{for $t \in (0, T)$}
$$
and $\dv \tilde\bg(\bu, H_h) = \tilde g(\bu, H_h)$ as follows from \eqref{div:2},
\eqref{div:3} and \eqref{g:1}.   Employing the same argument as
that in proving \eqref{4.5.44}, we have
\begin{equation}\label{semi:4.7*}
\|e^{-\gamma t}\tilde g(\bu, H_h)\|_{H^{1/2}_p(\BR, L_q(\dot\Omega))}
+ \|e^{-\gamma t}\tilde g (\bu, H_h)\|_{L_p(\BR, H^1_q(\dot\Omega))}
\leq CT^{1/(2p')}L(L+e^{2(\gamma-\gamma_1)}B).
\end{equation}

By \eqref{4.5.34} we have
\begin{align*}
\|\pd_t\tilde\bg(\bu, H_h)\|_{L_q(\dot\Omega)}
& \leq C\{\|\bar\nabla\CE_2[H_h]\|_{H^1_q(\Omega)}\|\pd_t\CE_1[\bu]\|_{L_q(\dot\Omega)}
+ \|\pd_t\bar\nabla\CE_2[H_h]\|_{L_q(\Omega)}\|\CE_1[\bv]\|_{H^1_q(\dot\Omega)}\}
\end{align*}
Thus, 
using \eqref{4.5.29}, \eqref{4.5.1}, \eqref{4.5.12}, 
\eqref{4.5.13}, \eqref{4.5.14}, \eqref{4.5.26}, \eqref{4.5.27}, and
\eqref{4.5.34},  we have
\begin{equation}\label{4.5.36}\begin{aligned}
&\|e^{-\gamma t}\pd_t\tilde\bg(\bu, H_h)\|_{L_p(\BR, L_q(\Omega))} 
\\
&\quad\leq C(\|H_h\|_{L_\infty((0, T), H^2_q(\Omega))} + 
\|T_h(\cdot)h_0\|_{L_\infty((0, \infty), H^2_q(\Omega))}) \\
&\qquad\times(\|\pd_t\bu\|_{L_p((0, T), L_q(\Omega))}
+ \|e^{-\gamma t}T_v(|\cdot|)\bu_0\|_{H^1_p((-2, \infty), L_q(\dot\Omega))}) \\
&\quad+ (T^{1/p}\|\pd_t H_h\|_{L_\infty((0, T), H^1_q(\Omega))}
+ \|\pd_t T_h(\cdot)h_0\|_{L_p((0, \infty), H^1_q(\Omega))}) \\
&\qquad\times(\|\bu\|_{L_\infty((0, T), H^1_q(\dot\Omega))}
+ \|e^{-\gamma t}T_v(|\cdot|)\bu_0\|_{L_\infty((-2, \infty), H^1_q(\dot\Omega))})
\\
&\quad\leq C\{ \epsilon +LT^{1/p'})(L+e^{2(\gamma-\gamma_1)}B)
+(T^{1/p}L+\epsilon)(L+e^{2(\gamma-\gamma_1)}B)\} \\
&\qquad \leq C
L(L+e^{2(\gamma-\gamma_1)}B)T^{1/p}.
\end{aligned}\end{equation}
We now apply Theorem \ref{thm:4.1.1} to equations \eqref{eq:4.1}
and use the estimate in Theorem \ref{thm:4.1.1} 
with $\gamma=\gamma_1$.
 And then, assuming that $1 \leq B \leq L$, 
 noting that  $s/(p'(1+s)) < 1/(2p) < 1/(2p')$ and using 
\eqref{4.5.8}, \eqref{4.5.24},  
\eqref{semi:4.7}, we have
\begin{equation}\label{semi:4.8}\begin{aligned}
&E^1_T(\bv) + \|\nabla\fq\|_{L_p((0, T), L_q(\dot\Omega))}
+ E^2_T(\rho) \\
&\quad \leq C(1+\gamma_1^{1/2})e^{\gamma_1 T^{1-1/p}}
\{B^2 + T^{-1/p}\epsilon +L^3T^{\frac{s}{p'(1+s)}}\}.
\end{aligned}\end{equation}
Here and in the following  $s \in (0, 1-2/p)$ and $\gamma_1$ are fixed, and so 
we do not take care of  the dependance of constants on $s$ and $\gamma_1$.  

By the third equation of \eqref{eq:4.1}, \eqref{4.4.3}, 
and \eqref{4.5.24},
we have
\begin{align*}
\|\pd_t\rho\|_{L_p((0, T), W^{1-1/q}_q(\Gamma))}
& \leq C\{\|\rho\|_{L_\infty((0, T), W^{2-1/q}_q(\Gamma))}B
+ \|\bv\|_{L_\infty((0, T), H^1_q(\dot\Omega))}
+ L(L+B)T^{1/p'}\} \\
& \leq C\{(\epsilon + T^{1/p'}E^2_T(\rho))B + B + E^1_T(\bv) + 
 L(L+B)T^{1/p'})\}
 \\
& \leq C\{B + E^1_T(\bv) + E^2_T(\rho) + L(L+B)T^{1/p'}\}, 
\end{align*}
where we used the facts that $\epsilon B \leq B$ and $T^{1/p'}B \leq T^{1/p'}L \leq 1$. 
which, combined with \eqref{semi:4.8}, gives that
\begin{align*}
&E^1_T(\bv) + E^2_T(\rho) + \|\pd_t\rho\|_{L_\infty((0, T), W^{1-1/q}_q(\Gamma))}
\\
&\quad\leq C[(1+\gamma_1^{1/2})e^{\gamma_1 T^{1-1/p}}\{B^2 + T^{-1/p}\epsilon
+ L^3T^{\frac{s}{p'(1+s)}}\} + B+L(L+B)T^{1/p'}].
\end{align*}
Noting that $0 < T=\epsilon < 1$ and $T^{-1/p}\epsilon = T^{1-1/p}
< 1 < B^2$, we have
\begin{equation}\label{semi:4.10}
E^1_T(\bv) + E^2_T(\rho) + \|\pd_t\rho\|_{L_\infty((0, T), W^{1-1/q}_q(\Gamma))}
\leq M_1(B^2 
 + L^3T^{\frac{s}{p'(1+s)}})
\end{equation}
for some positive constant $M_1$ depending on $s$ and $\gamma_1$
provided that $0 < T < 1$, $\epsilon=\kappa=T$, $T^{1/p'}L \leq 1$,  and 
$L > B \geq 1$.  

We now estimate $\bH$ by using Theorem \ref{thm:4.2.1} with the $\gamma_1$ given 
above.  
Let $\bff_2(\bu, \bG, H_h)$ be a nonlinear term given in 
\eqref{g:1*}.  Recalling the formula in \eqref{maineq:2*} and employing the same 
argument as that in proving  \eqref{4.5.8}, we have
\begin{equation}\label{4.5.8*}
\|\bff_2(\bu, \bG, H_h)\|_{L_p((0, T), L_q(\dot\Omega))}
\leq CT^{1/p}(L+B)^2.
\end{equation}
We next consider $\bh_2(\bu, \bG, H_h)$ and $h_3(\bu, \bG, H_h)$
 given in \eqref{maineq:6} and in \eqref{maineq:7}, respectively.  Let 
 $\tilde\bh_2(\bu, \bG, H_h)$ and $\tilde h_3(\bu, \bG, H_h)$ be their 
 extension to $\BR$ with respect to $t$ defined by setting 
\begin{equation}\label{eq:4.5.8} \begin{aligned}
 \tilde \bh_2(\bu, \bG, H_h) & = \bV^3_\bh(\cdot\bar\nabla\CE_2[H_h])
 \bar\nabla\CE_2[H_h]
\otimes \bar\nabla \CE_1[tr[\bu]] + b(y)\CE_1[tr[\bu]]\otimes \CE_1[tr[\bG]]
 \\
 & + \bV^4_\bh(\cdot, \bar\nabla\CE_2[H_h])\bar\nabla\CE_2[H_h]\otimes
\CE_1[tr[\bu]]\otimes \CE_1[tr[\bG]]; \\
 \tilde h_3(\bu, \bG, H_h) & =\mu\sum_{j,k=1}^N
 \tilde V_{0jk}(\nabla \CE_2[H_h])\nabla \CE_2[H_h]
 \frac{\pd}{\pd y_k}\CE_1[tr[\bu]]_j, 
 \end{aligned}\end{equation}
  Employing the same argument as in proving \eqref{4.5.44}, 
  we have 
  \begin{align}
  &\|e^{-\gamma t}(\tilde\bh_2(\bu, \bG, H_h), \tilde h_3(\bu, \bG, H_h))
  \|_{H^{1/2}_p(\BR, L_q(\dot\Omega))} +
  \|e^{-\gamma t}(\tilde\bh_2(\bu, \bG, H_h), \tilde h_3(\bu, \bG, H_h))
  \|_{L_p(\BR, H^1_q(\dot\Omega))} \nonumber  \\
  &\quad \leq 
  CT^{1/(2p')}L(e^{2(\gamma-\gamma_1)}B+L).
\label{4.5.37}
\end{align}


  We finally consider $k_1(\bG, H_\rho)$ and $\bk_2(\bG, H_\rho)$
  given in \eqref{maineq:8}. In view of \eqref{semi:4.10}, choosing
  $L$ so large that $M_1B^2 < L/2$ and 
  $T$ so small that $M_1L^3 T^{s/(p'(1+s))} \leq L/2$, we have
  \begin{equation}E^2_T(\rho) + \|\pd_t\rho\|_{L_\infty((0, T), 
  W^{1-1/q}_q(\Gamma))} \leq L.
  \label{small:4.1}
  \end{equation}
   In particular, we have 
  \begin{equation}\label{semi:11} \|\CE_2[H_\rho]\|_{L_\infty(\BR, H^2_q(\Omega)}
   \leq C(\epsilon + LT^{1/p'}).
   \end{equation}
  Thus, choosing $\epsilon= T$ so small,  we may also assume that
  \begin{equation}\label{4.10**}
  \sup_{t \in \BR}\|\CE_2[ H_\rho](\cdot, t)\|_{H^1_\infty(\Omega)} \leq 
  \delta.
   \end{equation}
   And also, we may assume that
   \begin{equation}\label{4.10***}
   \|\CE_2[H_\rho]\|_{L_\infty(\BR, H^2_q(\Omega)} \leq LT^{1/p'}, \quad 
   \|\CE_2[H_\rho]\|_{L_\infty(\BR, H^2_q(\Omega)} \leq 1.
   \end{equation}
   In view of \eqref{maineq:6*}, we define the
   extensions of $k_1(\bG, H_\rho)$ and $\bk_2(\bG, H_\rho)$ 
  by setting 
  \begin{equation}\label{eq:4.5.9}
  (\tilde k_1(\bG, H_\rho), \tilde \bk_2(\bG, H_\rho))
  = \bV^5_\bk(\cdot, \bar\nabla\CE_2[H_\rho])\bar\nabla \CE_2[H_\rho]
  \otimes\CE_1[tr[\bG]]. 
  \end{equation}
  Obviously, $(\tilde k_1(\bG, H_\rho), \tilde \bk_2(\bG, H_\rho))
=(k_1(\bG, H_\rho),  \bk_2(\bG, H_\rho))$ for $t \in (0, T)$. 
By \eqref{eq:4.5.1} and \eqref{4.10**} we have
\begin{align*}
&\|(\tilde k_1(\bG, H_\rho), \tilde\bk_2(\bG, H_\rho)\|_{H^2_q(\dot\Omega)} \\
&\quad \leq C\{\|\bar\nabla\CE_2[H_\rho]\|_{H^2_q(\dot\Omega)}
\|\CE_1[tr[\bG]]\|_{H^1_q(\dot\Omega)} 
+\|\bar\nabla\CE_2[H_\rho]\|_{H^1_q(\dot\Omega)}\|\nabla\CE_1[tr[\bG]]\|_{H^1_q(\dot\Omega)}\\
&\quad +\|\bar\nabla\CE_2[H_\rho]\|_{H^2_q(\dot\Omega)}(1+\|\bar\nabla\CE_2[H_\rho]\|_{H^1_q(\dot\Omega)})
\|\bar\nabla\CE_2[H_\rho]\|_{H^1_q(\dot\Omega)}\|\CE_1[tr[\bG]]\|_{H^1_q(\dot\Omega)}\}
\end{align*}
By \eqref{4.10***}, \eqref{est:1}, \eqref{4.5.27}, and \eqref{small:4.1}, we have 
\begin{equation} \label{4.5.38}\begin{aligned}
&\|e^{-\gamma t}(\tilde k_1(\bG, H_\rho), \tilde \bk_2(\bG, H_\rho))
\|_{L_p(\BR, H^2_q(\dot\Omega)}  \\
&\quad 
\leq C\{(\|\rho\|_{L_p((0, T), W^{3-1/q}_q(\Gamma))} + \epsilon)
(e^{2(\gamma-\gamma_1)}B + (L+B)T^{\frac{s}{p'(1+s)}}) \\
&\hskip7cm
+L^2T^{1/p'}(L+e^{2(\gamma-\gamma_1)}B)\}  \\
&\quad
\leq C\{e^{2(\gamma-\gamma_1)}B\|\rho\|_{L_p((0, T), W^{3-1/q}_q(\Gamma))}
+L^2(L+e^{2(\gamma-\gamma_1)}B)T^{\frac{s}{p'(1+s)}}\}.
\end{aligned}\end{equation}
By \eqref{4.5.26}, \eqref{4.5.27}, \eqref{est:1}, \eqref{small:4.1},  and \eqref{semi:11}, we have 
\begin{align}
&\|e^{-\gamma t}\pd_t(\tilde k_1(\bG, H_\rho), \tilde \bk_2(\bG, H_\rho))
\|_{L_p(\BR, L_q(\dot\Omega)}  \nonumber \\
&\quad
\leq C\{\|\pd_t\CE_2[H_\rho]\|_{L_p(\BR, H^1_q(\Omega))}
\|e^{-\gamma t}\CE_1[tr[\bG]]\|_{L_\infty(\BR, H^1_q(\dot\Omega))} \nonumber  \\
&\qquad + \|\CE_2[H_\rho]\|_{L_\infty(\BR, H^2_q(\Omega))}
\|e^{-\gamma t}\pd_t\CE_1[tr[\bG]]\|_{L_p(\BR, L_q(\dot\Omega))}
\nonumber \\
&\quad\leq C\{(T^{1/p}L + \epsilon)(e^{2(\gamma-\gamma_1)}B +(L+B)T^{\frac{s}{p'(1+s)}})
+ (\epsilon +T^{1/p'}L )(L+e^{2(\gamma-\gamma_1)}B)\} \nonumber \\
&\quad \leq C(T^{1/p} + T^{1/p'})L (L+ e^{2(\gamma-\gamma_1)}B)
\leq CL(L+e^{2(\gamma-\gamma_1)}B)T^{1/p}.
\label{4.5.39}
\end{align}
Applying the estimate in Theorem \ref{thm:4.2.1} with $\gamma=\gamma_1$ to equations \eqref{eq:4.2}
and using \eqref{4.5.8*}, \eqref{4.5.37}, 
\eqref{4.5.38}, and \eqref{4.5.39}, we have
$$E^1_T(\bH) \leq Ce^{\gamma_1 T}\{B(1 + \|\rho\|_{L_p((0, T), W^{3-1/q}_q(\Gamma))})
+L^2(L+B)T^{\frac{s}{p'(1+s)}}\},
$$
which, combined with \eqref{semi:4.8}, yields that
\begin{equation}\label{4.5.40}\begin{aligned}
& E^1_T(\bv) + E^2_T(\rho) +
\|\pd_t\rho\|_{L_p((0, T), W^{1-1/q}_q(\Gamma))} + E^1_T(\bH) \\
&\quad
\leq M_1(B^2+L^3T^{\frac{s}{p'(1+s)}}) + Ce^{\gamma_1 T}
(B(1 + M_1B^2 + M_1L^3T^{\frac{s}{p'(1+s)}}) + L^2(L+B)T^{\frac{s}{p'(1+s)}})
\\
& \quad \leq M_1B^2+ Ce^{\gamma_1} B(1+M_1B^2)
+ \{M_1L^3+ Ce^{\gamma_1} M_1L^3 + L^2(L+B)\}T^{\frac{s}{p'(1+s)}}\end{aligned}\end{equation}
provided that   $0 < \epsilon = T = \kappa < 1$,  $L>1$, and $B> 1$. 
Choosing $L > 0$ so large that $L /2\geq  M_1B^2+ Ce^{\gamma_1} B(1+M_1B^2)$ and 
$T> 0$ so small that $L /2 \geq  \{M_1L^3+ Ce^{\gamma_1} M_1L^3 + L^2(L+B)\}T^{\frac{s}{p'(1+s)}}$, 
and setting $L = f(B) = 2(M_1B^2+ Ce^{\gamma_1} B(1+M_1B^2))$, 
we see that 
$E_T(\bv, \bH, \rho) \leq L$. 
If we define a map $\Phi$ by $\Phi(\bu, \bG, h) = (\bv, \bH, \rho)$,
then, $\Phi$ maps $U_T$ into itself.  

\section{Estimates of the difference of nonlinar terms
and completion of the proof of Theorem \ref{thm:main}}\label{sec:5}

Let $(\bu_i, \bG_i, h_i) \in \bU_T$ ($i=1,2$). In this section mainly we shall estimate 
$E_T(\bv_1 - \bv_2, \bH_1-\bH_2, \rho_1-\rho_2)$ with  
$(\bv_i, \bH_i, \rho_i) = \Phi(\bu_i, \bG_i, h_i)$ ($i = 1,2$)
and then we shall prove that  $\Phi$ is a contraction map on $U_T$ 
with a suitable choice of  $\epsilon > 0$. 
For  notational simplicity, we set
\begin{align*}
\tilde\bv&=\bv_1-\bv_2, \quad \tilde\bH=\bH_1-\bH_2, 
\quad \tilde\rho = \rho_1-\rho_2, \quad 
\CF_1  = \bff_1(\bu_1, \bG_1, H_{h_1}) - \bff_1(\bu_2, \bG_2, H_{h_2}), 
\\
 \fg &= g(\bu_1, H_{h_1}) - g(\bu_2, H_{h_2}), \quad 
\CG  = \bg(\bu_1, H_{h_1}) - \bg(\bu_2, H_{h_2}), \quad
\CD = d(\bu_1, H_{H_1}) - d(\bu_2, H_{h_2}), \\
\CH_1 & = \bh_1(\bu_1, \bG_1, H_{h_1}) - \bh_1(\bu_2, \bG_2, H_{h_2}), 
\quad 
\CF_2 = \bff_2(\bu_1, \bG_1, H_{h_1}) - \bff_2(\bu_2, \bG_2, H_{h_2}), \\
\CH_2 & = \bh_2(\bu_1, \bG_1, H_{h_1}) - \bh_2(\bu_2, \bG_2, H_{h_2}), 
\quad 
\CH_3  = \bh_3(\bu_1, \bG_1, H_{h_1}) - \bh_3(\bu_2, \bG_2, H_{h_2}),\\
\CK_1 & = k_1(\bG_1, H_{\rho_1}) - k_1(\bG_2, H_{\rho_2}), \quad 
\CK_2 = \bk_2(\bG_1, H_{\rho_1}) -\bk_2(\bG_2, H_{\rho_2}).
\end{align*}
And then, $\bar\bv$ and $\bar\rho$ satisfy the following equations with 
some pressure term $Q$ : 
\begin{equation}\label{eq:5.1}\left\{
\begin{aligned}
\pd_t\tilde\bv - \DV\bT(\tilde \bv, Q) &= \CF_1, 
&\quad &\text{in $\dot\Omega\times(0, T)$}, \\
 \dv\tilde\bv= \fg &= \dv\CG
&\quad &\text{in $\dot\Omega\times(0, T)$}, \\
\pd_t \tilde\rho +<\nabla_\Gamma \tilde\rho \perp \bu_\kappa> - \bn\cdot
\tilde\bv_+ &= \CD
&\quad &\text{on $\Gamma\times(0, T)$},\\
[[\tilde\bv]]=0, \quad 
[[\bT(\tilde\bv, Q)\bn]] -\sigma (\Delta_\Gamma \tilde\rho + a\tilde\rho)\bn &= \CH_1, 
&\quad &\text{on $\Gamma\times(0, T)$}, \\
\tilde\bv_\pm&=0
&\quad&\text{on $S_\pm\times(0, T)$}, \\
(\tilde\bv, \tilde\rho)|_{t=0}  &= (0,  0)
&\quad&\text{in $\dot\Omega\times \Gamma$}.
\end{aligned}
\right.
\end{equation}
And $\bar\bH$ satisfies the following equations:
\begin{equation}\label{eq:5.2}\left\{
\begin{aligned}
\mu\pd_t\tilde\bH - \alpha^{-1}\Delta\tilde\bH &=\CF_2
&\quad &\text{in $\dot\Omega\times(0, T)$}, \\
[[\alpha^{-1}\curl\tilde\bH]]\bn= \CH_2 \quad [[\mu\dv\tilde\bH]] 
&=\CH_3
&\quad &\text{on $\Gamma\times(0, T)$}, \\
[[\mu\tilde\bH\cdot\bn]]=\CK_1, 
\quad [[\tilde\bH-<\tilde\bH, \bn>\bn]]&=\CK_2
&\quad &\text{on $\Gamma\times(0, T)$}, \\
\quad \bn_\pm\cdot\tilde\bH_\pm =0, \quad
(\curl\tilde\bH_\pm)\bn_\pm &= 0
&\quad&\text{on $S_\pm\times(0, T)$}, \\
\tilde
\bH|_{t=0}  &= 0
&\quad&\text{in $\dot\Omega$}.
\end{aligned}
\right.
\end{equation}
We have to estimate the nonlinear terms appearing in the right side of equations
\eqref{eq:5.1} and \eqref{eq:5.2}.  We start with estimating $\CF_1$.  
As was written in \eqref{non:f1}, we write 
$$\bff_1(\bu, \bG, H_h) = \bV_{\bff_1}(\bar\nabla H_h)\bff_3(\bu, \bG, H_h)$$
with
$$\bff_3(\bu, \bG, H_h) = \bar\nabla H_h
\otimes(\pd_t\bu, \nabla^2\bu)
+ \pd_tH_h\otimes\nabla\bu 
+\bu\otimes\nabla\bu+ \bar\nabla^2 H_h\otimes\nabla \bu
+ \bG\otimes\nabla\bG.
$$
And then, we can write $\CF_1$ as follows:
\begin{align*}
&\CF_1  = (\bV_{\bff_1}(\bar\nabla H_{h_1}) - \bV_{\bff_1}(\bar\nabla H_{h_2}))
\bff_3(\bu_1, \bG_1, H_{h_1}) +
\bV_{\bff_1}(\bar\nabla H_{h_2})(\bff_3(\bu_1, \bG_1, H_{h_1}) - 
\bff_3(\bu_2, \bG_2, H_{h_2})); \\
&\bff_3(\bu_1, \bG_1, H_{h_1}) - 
\bff_3(\bu_2, \bG_2, H_{h_2}) \\
&\quad 
= \bar\nabla(H_{h_1}-H_{h_2})
\otimes(\pd_t\bu_1, \nabla^2\bu_1)
+ \bar\nabla H_{h_2}\otimes(\pd_t(\bu_1-\bu_2), \nabla^2(\bu_1-\bu_2))\\
&\quad +\pd_t(H_{h_1}-H_{h_2})\otimes \nabla\bu_1
+ \pd_tH_{h_2}\otimes\nabla (\bu_1-\bu_2)
+(\bu_1-\bu_2)\otimes\nabla\bu_1 + \bu_2\otimes\nabla(\bu_1-\bu_2)\\
&\quad + \bar\nabla^2(H_{h_1}-H_{h_2})\otimes\nabla \bu_1
+ \bar\nabla^2H_{h_2}\otimes\nabla(\bu_1-\bu_2)
 +(\bG_1-\bG_2)\otimes\nabla \bG_1 + \bG_2\otimes\nabla(\bG_1-\bG_2).
\end{align*}
Since we may write
\begin{equation}\label{diff:1}
\bV_{\bff_1}(\bar\nabla H_{h_1}) - \bV_{\bff_1}(\bar\nabla H_{h_2})
= \int^1_0(d_{\bK}\bV_{\bff_1})(\bar\nabla H_{h_2}
+ \theta\bar\nabla(H_{h_1}-H_{h_2}))\,d\theta\bar\nabla(H_{h_1}-H_{h_2}),
\end{equation}
where $d_\bK\bV_{\bff_1}f$ is the derivative of $\bV_{\bff_1}(\bK)$ with respect
to $\bK$, noting that $H_{h_1}-H_{h_2} = 0$ for $t=0$ and using
 \eqref{4.5.14} and \eqref{4.5.4}, we have
\begin{equation}\label{eq:5.1} 
\|\bV^1_{\bff_1}(\bar\nabla H_{h_1}) - \bV^1_{\bff_1}(\bar\nabla H_{h_2})
\|_{L_\infty((0, T), H^1_q(\Omega))}
\leq T^{1/p'}\|\pd_t h_1- \pd_th_2\|_{L_p((0, T), H^2_q(\Omega))}.
\end{equation}
Since $\bff_3(\bu, \bG, H_h)$ satisfies the 
estimate \eqref{4.5.8}, replacing $h$, $\bu$, and $\bG$ with $h_1$, $\bu_1$ and $\bG_1$, we have 
\begin{equation}\label{eq:5.2}
 \|\bff_3(\bu_1, \bG_1, H_{h_1})\|_{L_p((0, T), L_q(\dot\Omega))}
 \leq C\{T^{1/p}(L+B)^2+ (\epsilon + T^{1/p'}L)L\}
 \leq CT^{1/p}(L+B)^2.
\end{equation}
By \eqref{4.5.1} and  \eqref{1.3.2}, we have
\begin{align*}
\|\bff_3&(\bu_1, \bG_1, H_{h_1}) - \bff_3(\bu_2, \bG_2, H_{h_2})\|_{L_q(\dot\Omega)}\\
&\leq C\{\|h_1-h_2\|_{W^{2-1/q}_q(\Gamma)}\|(\pd_t\bu_1, \nabla^2\bu_1)\|_{L_q(\dot\Omega)}
+ \|h_2\|_{W^{2-1/q}_q(\Gamma)}(\|\pd_t(\bu_1-\bu_2), \nabla^2(\bu_1-\bu_2)\|_{L_q(\dot\Omega)})\\
&
 +\|\pd_t(h_1-h_2)\|_{W^{1-1/q}_q(\Gamma)}\|\nabla\bu_1\|_{L_q(\dot\Omega)}
 + \|\pd_th_2\|_{W^{1-1/q}_q(\Gamma)}\|\nabla(\bu_1-\bu_2)\|_{L_q(\dot\Omega)}\\
& +\|(\bu_1, \bu_2)\|_{H^1_q(\dot\Omega)}\|\bu_1-\bu_2\|_{H^1_q(\dot\Omega)}
+ \|h_1-h_2\|_{W^{2-1/q}_q(\Gamma)}\|\bu_1\|_{H^2_Q(\dot\Omega)}\\
&+ \|h_2\|_{W^{2-1/q}_q(\Gamma)}\|\bu_1-\bu_2\|_{H^2_q(\dot\Omega)}
+ \|(\bG_1, \bG_2)\|_{H^1_q(\dot\Omega)}\|\bG_1-\bG_2)\|_{H^1_q(\dot\Omega)}\}.
\end{align*}
Since 
\begin{equation}\label{diffh.1}
\|h_1-h_2\|_{L_\infty((0, T), W^{2-1/q}_q(\Gamma)} \leq T^{1/p'}\|\pd_t(h_1-h_2)
\|_{L_p((0, T), W^{2-1/q}_q(\Gamma))},
\end{equation}
noting that $\bu_1-\bu_2=0$  and $\bG_1-\bG_2=0$ at $t=0$, by \eqref{4.5.14},
\eqref{4.5.10}, and \eqref{4.5.12}, we have
\begin{align*}
&\|\bff_3(\bu_1, \bG_1, H_{h_1}) - \bff_3(\bu_2, \bG_2, H_{h_2}))
\|_{L_p((0, T), L_q(\dot\Omega))}\\
&\quad
\leq C\{T^{1/{p'}}\|\pd_t(h_1-h_2)\|_{L_p((0, T), W^{2-1/q}_q(\Gamma))}L
+ LT^{1/p'}E^1_T(\bu_1-\bu_2) \\
&\quad + T^{1/p}(\|\pd_t(h_1-h_2)\|_{L_\infty((0, T), W^{1-1/q}_q(\Gamma))}(B+L)
+LE^1_T(\bu_1-\bu_2) +(B+L)E^1_T(\bu_1-\bu_2) )\\
&\quad 
+T^{1/p'}(\|\pd_t(h_1-h_2)\|_{L_p((0, T), W^{2-1/q}_q(\Gamma))}L 
+ L\|\bu_1-\bu_2\|_{L_p((0, T), H^2_q(\dot\Omega))})
+ T^{1/p}(B+L)E^2_T(\bG_1-\bG_2), 
\end{align*}
which, combined with \eqref{eq:5.1} and \eqref{eq:5.2}, leads to 
\begin{equation}\label{eq:5.3}
\|\CF_1\|_{L_p((0, T), L_q(\dot\Omega))}
\leq CT^{1/p}(L+B)E_T(\bu_1-\bu_2, \bG_1-\bG_2, h_1-h_2).
\end{equation}
where we have used the estimate: $T^{1/p'}T^{1/p}(L+B)^2 \leq 2(T^{1/p'}L)T^{1/p}(L+B)
\leq 2T^{1/p}(L+B)$, which follows from $B \leq L$ and $T^{1/p'}L\leq 1$. 

We next consider the difference $\CD$.  In view of \eqref{kinema:2}, 
we write
\begin{align*}
\CD& = <\nabla_\Gamma(H_{h_1}-H_{h_2})\perp \bu_1-\bu_\kappa> 
+ <\nabla_\Gamma H_{h_2} \perp \bu_1- \bu_2> \\
& + <\bu_1-\bu_2 - \frac{\pd}{\pd t}(H_{h_1}- H_{h_2})\bn, 
\bV_\bn(\cdot, \bar\nabla H_1)\bar\nabla H_1\otimes\bar\nabla H_1>
\\
& + <\bu_2 - \frac{\pd}{\pd t}H_{h_2}\bn,
(\tilde \bV_\bn(\cdot, \bar\nabla H_{h_1}) - \tilde \bV_\bn(\cdot, 
\bar\nabla H_{h_2}))\otimes \bar\nabla H_{h_1}> 
\\
& +  <\bu_2 - \frac{\pd}{\pd t}H_{h_2}\bn,
\bV_\bn(\cdot, \bar\nabla H_{h_2})\bar\nabla H_{h_2} 
\otimes \bar\nabla(H_{h_1} - H_{h_2})>,
\end{align*}
where we have set $\bV_\bn(\cdot, \bK)\bK = \tilde\bV_\bn(\cdot, \bK)$. 
We have
\begin{align}
\|\CD\|_{L_\infty((0, T), W^{1-1/q}_q(\Gamma))} 
&\leq  C(L+B)T^{1/p'}E_T(\bu_1-\bu_2, \bG_1-\bG_2, h_1-h_2);
\label{eq:5.5}\\
\|\CD\|_{L_p((0, T), W^{2-1/q}_q(\Gamma))} & \leq CL(L+B)T^{\frac{s}{p'(1+s))}}
E_T(\bu_1-\bu_2, \bG_1-\bG_2, h_1-h_2).  \label{eq:5.6}
\end{align} 
In fact, noting that  the difference: 
$\tilde \bV_\bn(\cdot, \bar\nabla H_{h_1}) - \tilde \bV_\bn(\cdot, 
\bar\nabla H_{h_2})$ has the similar formula to that in \eqref{diff:1},
 by \eqref{1.3.2}, \eqref{4.5.4}, \eqref{4.5.1}, and \eqref{eq:4.5.1},  we have
\begin{align*}
\|\CD&\|_{W^{1-1/q}_q(\Gamma)} 
 \leq C\{\|h_1-h_2\|_{W^{2-1/q}_q(\Gamma))}
\|\bu_1-\bu_\kappa\|_{ H^1_q(\Omega_+)}  + \|h_2\|_{W^{2-1/q}_q(\Gamma)} 
\|\bu_1-\bu_2\|_{ H^1_q(\Omega_+)} \\
 & +(\|\bu_1-\bu_2\|_{H^1_q(\Omega_+)} 
 + \|\pd_t(h_1-h_2)\|_{W^{1-1/q}_q(\Gamma))})
 (1+\|h_1\|_{W^{2-1/q}_q(\Gamma))})\|h_1\|_{W^{2-1/q}_q(\Gamma))}^2
 \\ 
& +(\|\bu_2\|_{H^1_q(\Omega_+)}+\|\pd_th_2\|_{W^{1-1/q}_q(\Gamma)})
(1 + \|h_1\|_{W^{2-1/q}_q(\Gamma)} + \|h_2\|_{W^{2-1/q}_q(\Gamma)}) \\
 &\hskip7cm \times \|h_1-h_2\|_{W^{2-1/q}_q(\Gamma)}\|h_1\|_{W^{2-1/q}_q(\Gamma)}
 \\
 &  +(\|\bu_2\|_{H^1_q(\Omega_+)}+\|\pd_th_2\|_{W^{1-1/q}_q(\Gamma)})
 (1 + \|h_2\|_{W^{2-1/q}_q(\Gamma)})\|h_2\|_{W^{2-1/q}_q(\Gamma)}
 \|h_1-h_2\|_{W^{2-1/q}_q(\Gamma)}\}. 
\end{align*}
Thus, by \eqref{4.5.12}, \eqref{4.5.14}, \eqref{eq:d.3}, and \eqref{diffh.1} we have 
\begin{align*}
&\|\CD\|_{L_\infty((0, T), W^{1-1/q}_q(\Gamma))} \\
&\leq C\{T^{1/p'}E^2_T(h_1-h_2)T^{\frac{s}{p'(1+s)}}(L+B) 
+  LT^{1/p'} E^1_T(\bu_1-\bu_2) \\
&+LT^{1/p'}(E^1_T(\bu_1-\bu_2) + 
\|\pd_t(h_1-h_2)\|_{L_\infty((0, T), W^{1-1/q}_q(\Gamma))})
 +(L+B)T^{1/p'}(\epsilon + LT^{1/p'})E^2_T(h_1-h_2)\},
\end{align*}
which leads to the first inequality in \eqref{eq:5.5}, because 
$T^{\frac{s}{p'(1+s)}}<1$ as follows from $0 < T< 1$. 

By \eqref{1.3.2}, \eqref{4.5.4}, \eqref{4.5.1}, and \eqref{eq:4.5.1}, we have
\begin{align*}
&\|\CD\|_{W^{2-1/q}_q(\Gamma)}  \leq C[\|h_1-h_2\|_{W^{3-1/q}_q(\Gamma)}
\|\bu_1-\bu_\kappa\|_{H^1_q(\Omega_+)}
+ \|h_1-h_2\|_{W^{2-1/q}_q(\Gamma)}\|\bu_1-\bu_\kappa\|_{H^2_q(\Omega_+)}\\
&\quad 
+ \|h_2\|_{W^{3-1/q}_q(\Gamma)}\|\bu_1-\bu_2\|_{H^1_q(\Omega_+)}
+ \|h_2\|_{W^{2-1/q}_q(\Gamma)}\|\bu_1-\bu_2\|_{H^2_q(\Omega_+)}\\
&\quad +(\|\bu_1-\bu_2\|_{H^2_q(\Omega_+)}
+ \|\pd_t(h_1-h_2)\|_{W^{2-1/q}_q(\Gamma)})(1+\|h_1\|_{W^{2-1/q}_q(\Gamma)})
\|h_1\|_{W^{2-1/q}_q(\Gamma)}^2 \\
&\quad + (\|\bu_1-\bu_2\|_{H^1_q(\Omega_+)}
+ \|\pd_t(h_1-h_2)\|_{W^{1-1/q}_q(\Gamma)})\\
&\qquad \times (\|h_1\|_{W^{3-1/q}_q(\Gamma)}\|h_1\|_{W^{2-1/q}_q(\Gamma)}
+ \|h_1\|_{W^{3-1/q}_q(\Gamma)}(1 + \|h_1\|_{W^{2-1/q}_q(\Gamma)})
\|h_1\|_{W^{2-1/q}_q(\Gamma)}^2)\\
&\quad +(\|\bu_2\|_{H^2_q(\Omega_+)} + \|\pd_th_2\|_{W^{2-1/q}_q(\Gamma)})
(1 + \|h_1\|_{W^{2-1/q}_q(\Gamma)} + \|h_2\|_{W^{2-1/q}_q(\Gamma)})\\
&\hskip6cm \times
\|h_1-h_2\|_{W^{2-1/q}_q(\Gamma)}\|h_1\|_{W^{2-1/q}_q(\Gamma)}\\
&\quad + (\|\bu_2\|_{H^1_q(\Omega_+)} + \|\pd_th_2\|_{W^{1-1/q}_q(\Gamma)})
\{\|h_1-h_2\|_{W^{3-1/q}_q(\Gamma)}\|h_1\|_{W^{2-1/q}_q(\Gamma)} \\
&\quad  + \|h_1-h_2\|_{W^{2-1/q}_q(\Gamma)}\|h_1\|_{W^{3-1/q}_q(\Gamma)}
+ (\|h_1\|_{W^{3-1/q}_q(\Gamma)} + \|h_2\|_{W^{3-1/q}_q(\Gamma)})\\
&\qquad\times(1+\|h_1\|_{W^{2-1/q}_q(\Gamma)} 
+ \|h_2\|_{W^{2-1/q}_q(\Gamma)}) \|h_1-h_2\|_{W^{2-1/q}_q(\Gamma)}
\|h_1\|_{W^{2-1/q}_q(\Gamma)}\}\\
&\quad +(\|\bu_2\|_{H^2_q(\Omega_+)} + \|\pd_th_2\|_{W^{2-1/q}_q(\Gamma)})
(1 + \|h_2\|_{W^{2-1/q}_q(\Gamma)})\|h_2\|_{W^{2-1/q}_q(\Gamma)}\|h_1-h_2\|_{W^{2-1/q}_q(\Gamma)}\\
&\quad + (\|\bu_2\|_{H^1_q(\Omega_+)} + \|\pd_th_2\|_{W^{1-1/q}_q(\Gamma)})\\
&\qquad\times\{\|h_2\|_{W^{3-1/q}_q(\Gamma)}\|h_1-h_2\|_{W^{2-1/q}_q(\Gamma)} 
 + \|h_2\|_{W^{2-1/q}_q(\Gamma)}\|h_1-h_2\|_{W^{3-1/q}_q(\Gamma)} \\
&\qquad \quad+ \|h_2\|_{W^{3-1/q}_q(\Gamma)}(1 +  \|h_2\|_{W^{2-1/q}_q(\Gamma)})
\|h_2\|_{W^{2-1/q}_q(\Gamma)}\|h_1-h_2\|_{W^{2-1/q}_q(\Gamma)}\}].
\end{align*}
Since 
\begin{equation}\label{est:6}\begin{aligned}
&\|\bu_1-\bu_2\|_{H^1_q(\Omega_+)}  \\
&\quad \leq C_sT^{\frac{s}{p'(1+s)}}
\|\pd_t(\bu_1-\bu_2)\|_{L_p((0, T), L_q(\Omega_+))}^{s/(1+s)}
\|\bu_1-\bu_2\|_{B^{2(1-1/p)}_{q,p}(\Omega_+)}
\leq C_sT^{\frac{s}{p'(1+s)}}E^1_T(\bu_1-\bu_2)
\end{aligned}\end{equation}
by \eqref{4.5.14}, \eqref{4.5.10}, and \eqref{eq:d.3}
we have 
\begin{align*}
&\|\CD\|_{L_p((0, T), W^{2-1/q}_q(\Gamma))}\\
&\quad \leq C[T^{\frac{s}{p'(1+s)}}(L+B)\|h_1-h_2\|_{L_p((0, T), W^{3-1/q}_q(\Gamma))}
+ T^{1/p'}\|\pd_t(h_1-h_2)\|_{L_p((0, T), W^{2-1/q}_q(\Gamma))}(B+L)\\
&\quad + LT^{\frac{s}{p'(1+s)}}E^1_T(\bu_1-\bu_2)
+LT^{1/p'}\|\bu_1-\bu_2\|_{L_p((0, T), H^2_q(\dot\Omega_+))} \\
&\quad+(\|\bu_1-\bu_2\|_{L_p((0, T), H^2_q(\dot\Omega_+))} 
+ \|\pd_t(h_1-h_2)\|_{L_p((0, T), W^{2-1/q}_q(\Gamma))})LT^{1/p'}\\
&\quad +(E^1_T(\bu_1-\bu_2) + \|\pd_t(h_1-h_2)\|_{L_\infty((0, T), W^{1-1/q}_q(\Gamma))})L^2T^{1/p'}\\
&\quad + LT^{1/p'}\|\pd_t(h_1-h_2)\|_{L_p((0, T), W^{2-1/q}_q(\Gamma))}
+(L+B)\{\|h_1-h_2\|_{L_p((0, T), W^{3-1/q}_q(\Gamma))}LT^{1/p'} \\
&\quad + T^{1/p'}\|\pd_t(h_1-h_2)\|_{L_p((0, T), W^{2-1/q}_q(\Gamma))}L\}
+ LT^{1/p'}\|\pd_t(h_1-h_2)\|_{L_p((0, T), W^{2-1/q}_q(\Gamma))}\\
&\quad + (L+B)(LT^{1/p'}\|\pd_t(h_1-h_2)\|_{L_p((0, T), W^{2-1/q}_q(\Gamma))}
+ LT^{1/p'}\|h_1-h_2\|_{L_p((0, T), W^{3-1/q}_q(\Gamma))}\\
&\hskip8cm+ LT^{1/p'}\|\pd_t(h_1-h_2)\|_{L_p((0, T), W^{2-1/q}_q(\Gamma))})],
\end{align*}
which yields \eqref{eq:5.6}.

We next conider $\CH_1$.  In view of \eqref{eq:4.1*}, we set
$$\tilde\CH_1 = \CH^1_1 + a(y)\CH^2_1 + \CH^3_1+ \CH^4_1$$
with
\begin{align*}
\CH^1_1 & = (\tilde\bV^1_\bh(\cdot, \bar\nabla \CE_2[H_{h_1}])
- \tilde\bV^1_\bh(\cdot, \bar\nabla \CE_2[H_{h_2}]))\otimes\nabla \CE_1[tr[\bu_1]]\\
&\quad + \tilde\bV_\bh(\cdot, \bar\nabla\CE_2[H_{h_2}])\bar\nabla \CE_2[H_{h_2}]
\otimes\nabla(\CE_1[tr[\bu_1]]-\CE_1[tr[\bu_2]]), \\
\CH^2_1 & = (\CE_1[tr[\bG_1]]-\CE_1[tr[\bG_2]])\otimes \CE_1[tr[\bG_1]]\\
&\quad+ \CE_1[tr[\bG_2]]\otimes (\CE_1[tr[\bG_1]]-\CE_1[tr[\bG_2]]), \\
\CH^3_1 & = (\tilde\bV^2_\bh(\cdot, \bar\nabla \CE_2[H_{h_1}])
- \tilde\bV^2_\bh(\cdot, \bar\nabla \CE_2[H_{h_2}]))\otimes \CE_1[tr[\bG_1]]
\otimes \CE_1[tr[\bG_1]]\\
&\quad + \bV^2_\bh(\cdot, \bar\nabla\CE_2[H_{h_2}])\bar\nabla\CE_2[H_{h_2}]
\otimes \CH^2_1, \\
\CH^4_1 & = (\tilde\bV_s(\cdot, \bar\nabla \CE_2[H_{h_1}])
- \tilde\bV_s(\cdot, \bar\nabla \CE_2[H_{h_2}]))\otimes 
\bar\nabla^2\CE_2[H_{h_1}] \\
&\quad + \bV_s(\cdot, \bar\nabla \CE_2[H_{h_2}])\bar\nabla\CE_2[H_{h_2}]
\otimes\bar\nabla^2(\CE_2[H_{h_1}] - \CE_2[H_{h_2}]),
\end{align*}
where we have set
$$\tilde\bV^1_\bh(\cdot, \bK) = \bV^1_\bh(\cdot, \bK)\bK, 
\quad
\tilde\bV^2_\bh(\cdot, \bK) = \bV^2_\bh(\cdot, \bK)\bK, 
\quad
\tilde\bV_s(\cdot, \bK) = \bV_s(\cdot, \bK)\bK. 
$$
We see that $\tilde\CH_1$ is defined for $t \in \BR$ and 
$\tilde \CH_1 = \CH_1$ for $t \in (0, T)$.  Writing 
\begin{align*}
\tilde\bV^1_\bh(\cdot, \bar\nabla\CE_2[H_{h_1}]) - 
\tilde\bV^1_\bh(\cdot, \bar\nabla\CE_2[H_{h_2}]) 
&= \int^1_0(d_K\tilde\bV^1_\bh)(\cdot, \bar\nabla \CE_2[H_{h_2}]
+ \theta\bar\nabla(\CE_2[H_{h_1}] - \CE_2[H_{h_2}]))\,d\theta\\
&\qquad \times
\bar\nabla(\CE_2[H_{h_1}] - \CE_2[H_{h_2}]),
\end{align*}
and since $\CE_2[H_{h_1}] - \CE_2[H_{h_2}] = e_T[H_{h_1}- H_{h_2}]$, 
by \eqref{1.3.2}, \eqref{4.5.1}, \eqref{est:1}, and \eqref{4.5.34*}, and \eqref{diffh.1}, 
we have 
\begin{equation}\label{eq:5.12}\begin{aligned}
&\|e^{-\gamma t}\pd_t(\tilde\bV^1_\bh(\cdot, \bar\nabla\CE_2[H_{h_1}]) - 
\tilde\bV^1_\bh(\cdot, \bar\nabla\CE_2[H_{h_2}]) )
\|_{L_\infty(\BR, L_q(\dot\Omega))} \\
&\quad \leq C\{(\|\pd_t\bar\nabla \CE_2[H_{h_1}]\|_{L_\infty(\BR, L_q(\Omega))}
+ \|\pd_t\bar\nabla\CE_2[H_{h_2}]\|_{L_\infty(\BR, L_q(\Omega))})
\|h_1-h_2\|_{L_\infty((0, T), W^{2-1/q}_q(\Gamma))}
 \\
&\quad +(\|\bar\nabla\CE_2[H_{h_1}]\|_{L_\infty(\BR, H^1_q(\Omega))}
+ \|\bar\nabla\CE_2[H_{h_2}]\|_{L_\infty(\BR, H^1_q(\Omega))})
\|\pd_t(h_1-h_2)\|_{L_\infty((0, T), W^{1-1/q}_q(\Gamma))}
 \\
& \quad \leq CLT^{1/p'}(\|\pd_t(h_1-h_2)\|_{L_p((0, T), W^{2-1/q}_q(\Gamma))}
+ \|\pd_t(h_1-h_2)\|_{L_\infty((0, T), W^{1-1/q}_q(\Gamma))}\}.
\end{aligned}\end{equation}
  And also, by \eqref{4.5.34*} and \eqref{diffh.1}, we have 
\begin{align}
&\|e^{-\gamma t}(\tilde\bV^1_\bh(\cdot, \bar\nabla\CE_2[H_{h_1}])-
\tilde\bV^1_\bh(\cdot, \bar\nabla \CE_2[H_{h_2}]))\|_{L_\infty(\BR, H^1_q(\Omega))}
\nonumber \\
&\quad \leq C(1 + \|\bar\nabla\CE_2[H_{h_1}]\|_{L_\infty(\BR, H^1_q(\Omega))}
+ \|\bar\nabla\CE_2[H_{h_2}]\|_{L_\infty(\BR, H^1_q(\Omega))}]
\|\bar\nabla e[H_{h_1}-H_{h_2}]\|_{L_\infty(\BR, H^1_q(\Omega))} 
\nonumber \\
&\quad \leq CT^{1/p'}\|\pd_t(h_1-h_2)\|_{L_p((0, T), W^{2-1/q}_q(\Gamma))}.
\label{eq:5.13}
\end{align}
By Lemma \ref{lem:4.5.4} and \eqref{est:1}, we have
\begin{align*}
&\|\nabla \CE_1[tr[\bu_1]]\|_{H^{1/2}_p(\BR, L_q(\dot\Omega))}
+ \|\nabla\CE_1[tr[\bu_1]]\|_{L_p(\BR, H^1_q(\dot\Omega))}
\\
&\quad \leq C(\|\CE_1[tr[\bu_1]]\|_{L_p(\BR, H^2_q(\dot\Omega))}
+ \|\pd_t\CE_1[tr[\bu_1]]\|_{L_p(\BR, L_q(\dot\Omega))} \\
&\quad  \leq C(B+L).
\end{align*}
Thus, setting 
$$\CH^{11}_1 = (\tilde\bV^1_\bh(\cdot, \bar\nabla \CE_2[H_{h_1}]) - 
\tilde\bV^1_\bh(\cdot, \bar\nabla \CE_2[H_{h_2}]))
\otimes \nabla\CE_1[tr[\bu_1]],$$
by Lemma \ref{lem:4.5.3} we have
\begin{equation}\label{eq:5.7}\begin{aligned}
&\|e^{-\gamma t}\CH^{11}_1\|_{H^{1/2}_p(\BR, L_q(\dot\Omega))}
+ \|e^{-\gamma t}\CH^{11}_1\|_{L_p((\BR, H^1_q(\dot\Omega))} \\
&\quad \leq CL(B+L)T^{1/p'}
(\|\pd_t(h_1-h_2)\|_{L_p((0, T), W^{2-1/q}_q(\Gamma))}
+ \|\pd_t(h_1-h_2)\|_{L_\infty((0, T), W^{1-1/q}_q(\Gamma))}).
\end{aligned}\end{equation}
Noticing that 
$\CE_1[tr[\bu_1]]-\CE_1[tr[\bu_2]] = e_T[tr[\bu_1]-tr[\bu_2]]$, 
by \eqref{trace:4} and Lemma \ref{lem:4.5.4}, we have
\begin{align*}
&\|e^{-\gamma t}
\nabla(\CE_1[tr[\bu_1]]-\CE_2[tr[\bu_2]])\|_{H^{1/2}_p(\BR, L_q(\dot\Omega))}
+\|e^{-\gamma t}\nabla(\CE_1[tr[\bu_1]]-\CE_2[tr[\bu_2]])
\|_{L_p(\BR, H^1_q(\dot\Omega))} \\
&\quad \leq C(\|\bu_1- \bu_2\|_{L_p((0, T), H^2_q(\dot\Omega))}
+ \|\pd_t(\bu_1-\bu_2)\|_{L_p((0, T), L_q(\dot\Omega))}).
\end{align*}
Thus, setting 
$\CH^{12}_1 =  \tilde\bV_\bh(\cdot, \bar\nabla\CE_2[H_{h_2}])\bar\nabla \CE_2[H_{h_2}]
\otimes\nabla(\CE_1[tr[\bu_1]]-\CE_1[tr[\bu_2]])$,
by \eqref{est:2} and Lemma \ref{lem:4.5.3},  we have
\begin{align*}
&\|e^{-\gamma t}\CH^{12}_1\|_{H^{1/2}_p(\BR, L_q(\dot\Omega))}
+ \|e^{-\gamma t}\CH^{12}_1\|_{L_p(\BR, H^1_q(\dot\Omega))} \\
&\quad \leq CLT^{1/(2p')}
(\|\bu_1- \bu_2\|_{L_p((0, T), H^2_q(\dot\Omega))}
+ \|\pd_t(\bu_1-\bu_2)\|_{L_p((0, T), L_q(\dot\Omega))}),
\end{align*}
which, combined with \eqref{eq:5.7}, yields that 
\begin{equation}\label{eq:5.8}\begin{aligned}
&\|e^{-\gamma t}\CH_1^1\|_{H^{1/2}_p(\BR, L_q(\dot\Omega))}
+
\|e^{-\gamma t}\CH_1^1\|_{L_p(\BR, H^1_q(\dot\Omega))}
\\
&\quad \leq CL(B+L)T^{1/(2p')}
 E_T(\bu_1-\bu_2, \bG_1-\bG_2, h_1-h_2).
\end{aligned}\end{equation}

We next consider $\CH^2_1$.  Since 
$\CE_1[tr[\bG_1]]- \CE_2[tr[\bG_2]] = e_T[tr[\bG_1] - tr[\bG_2]]$, 
we have
\begin{equation}\label{eq:5.8*}
\begin{aligned}
&\|e^{-\gamma t}(\CE_1[tr[\bG_1]]-\CE_1[tr[\bG_2]])
\|_{H^1_p(\BR, L_q(\dot\Omega))}
\leq C\|\bG_1- \bG_2\|_{H^1_p((0, T), L_q(\dot\Omega))}; \\
&\|e^{-\gamma t}(\CE_1[tr[\bG_1]]-\CE_1[tr[\bG_2]])
\|_{L_p(\BR, L_q(\dot\Omega))} \\
&\quad\leq CT^{1/p}(\|\bG_1- \bG_2\|_{L_p((0, T), H^2_q(\dot\Omega))}
+ \|\pd_t(\bG_1- \bG_2)\|_{L_p((0, T), L_q(\dot\Omega))});
\\
&\|e^{-\gamma t}(\CE_1[tr[\bG_1]]-\CE_1[tr[\bG_2]])
\|_{L_\infty(\BR, H^1_q(\dot\Omega))} \\
&\quad\leq \|\bG_1-\bG_2\|_{L_\infty((0, T), L_q(\dot\Omega))}^{s/(1+s)}
\|\bG_1-\bG_2\|_{L_\infty((0, T), W^s_q(\dot\Omega))}^{1/(1+s)}\\
&\quad \leq CT^{\frac{s}{p'(1+s)}}
(\|\bG_1- \bG_2\|_{L_p((0, T), H^2_q(\dot\Omega))}
+ \|\pd_t(\bG_1- \bG_2)\|_{L_p((0, T), L_q(\dot\Omega))}).
\end{aligned}\end{equation}
On the other hand, we have
\begin{equation}\label{eq:5.10}\begin{aligned}
\|\CE_1[tr[\bG_i]]\|_{H^1_p(\BR, L_q(\dot\Omega))} & 
\leq C(L+B); \\
\|\CE_1[tr[\bG_i]]\|_{L_p(\BR, L_q(\dot\Omega))} & 
\leq C(L+B); \\
\|\CE_1[tr[\bG_i]]\|_{L_\infty(\BR, H^1_q(\dot\Omega))} & 
\leq C(L+B)
\end{aligned}\end{equation}
for $i=1,2$, and therefore by \eqref{complex:1} we have
\begin{equation}\label{eq:5.9}\begin{aligned}
&\|e^{-\gamma t}\CH^2_1\|_{H^{1/2}_p(\BR, L_q(\dot\Omega))}
\leq C(L+B)T^{1/(2p)}E^1_T(\bG_1-\bG_2).
\end{aligned}\end{equation}
 And also, by \eqref{4.5.1}, \eqref{eq:5.10}
and \eqref{4.5.12}, we have
\begin{align*}
&\|e^{-\gamma t}\CH^2_1\|_{L_p(\BR, H^1_q(\dot\Omega))}
\leq C(L+B)\|\bG_1-\bG_2\|_{L_p((0, T), H^1_q(\dot\Omega))} 
\\
&\quad \leq C(L+B)T^{1/p}
(\|\pd_t(\bG_1-\bG_2)\|_{L_p((0, T), L_q(\dot\Omega))}
+ \|\bG_1- \bG_2\|_{L_p((0, T), H^2_q(\dot\Omega))}),
\end{align*}
which, combined with \eqref{eq:5.9}, yields that
\begin{equation}\label{eq:5.11}
\|e^{-\gamma t}\CH^2_1\|_{H^{1/2}_p(\BR, L_q(\dot\Omega))}
+ \|e^{-\gamma t}\CH^2_1\|_{L_p(\BR, H^1_q(\dot\Omega))}
\leq C(L+B)T^{1/(2p)}E^1_T(\bG_1-\bG_2).
\end{equation}
Since 
$$\|\CE_1[tr[\bG_1]]\otimes \CE_1[tr[\bG_1]]\|_{H^{1/2}_p(\BR, L_q(\dot\Omega))}
+ \|\CE_1[tr[\bG_1]]\otimes \CE_1[tr[\bG_1]]\|_{L_p(\BR, H^1_q(\dot\Omega))}
\leq C(L+B)^2,
$$
setting 
$$\CH^{31}_1=(\tilde\bV^2_\bh(\cdot,\bar\nabla\CE_2[H_{h_1}]) - 
\tilde\bV^2_\bh(\cdot,\bar\nabla\CE_2[H_{h_2}])
\CE_1[tr[\bG_1]]\otimes \CE_1[tr[\bG_1]],$$
by Lemma \ref{lem:4.5.3}, \eqref{eq:5.12}, and \eqref{eq:5.13}, we have
\begin{equation}\label{eq:5.14}\begin{aligned}
&\|e^{-\gamma t}\CH^{31}_1\|_{H^{1/2}_p(\BR, L_q(\dot\Omega))}\\
&\quad\leq CL(L+B)^2T^{1/p'}(\|\pd_t(h_1-h_2)\|_{L_p((0, T), W^{2-1/q}_q(\Gamma))}
+ \|\pd_t(h_1-h_2)\|_{L_\infty((0, T), W^{1-1/q}_q(\Gamma))}).
\end{aligned}\end{equation}
By \eqref{1.3.2}, \eqref{4.5.34*}, \eqref{est:1}, and \eqref{diffh.1},  we have
\begin{align*}
\|e^{-\gamma t}\CH^{31}_1\|_{L_p(\BR, H^1_q(\dot\Omega))}
& \leq  C(1 + \|\bar\nabla \CE_2[H_{h_1}]\|_{L_\infty(\BR, H^1_q(\Omega))}
+ \|\bar\nabla \CE_2[H_{h_1}]\|_{L_\infty(\BR, H^1_q(\Omega))})\\
&\qquad\times 
\|\bar\nabla e_T[H_{h_1}-H_{h_2}]\|_{L_p((0, T), H^1_q(\dot\Omega))}
\|\CE[tr[\bG_1]]\|_{L_\infty(\BR, H^1_q(\dot\Omega))}^2\\
&\leq C(L+B)^2T^{1/p'}\|\pd_t(h_1-h_2)\|_{L_p((0, T), W^{2-1/q}_q(\Gamma))},
\end{align*}
which, combined with \eqref{eq:5.14}, yields that
\begin{equation}\label{eq:5.15*}\begin{aligned}
&\|e^{-\gamma t}\CH^{31}_1\|_{H^{1/2}_p(\BR, L_q(\dot\Omega))}
+ \|e^{-\gamma t}\CH^{31}_2\|_{L_p(\BR, H^1_q(\dot\Omega))}\\
&\quad \leq CL(L+B)^2T^{1/p'}
(\|\pd_t(h_1-h_2)\|_{L_p((0, T), W^{2-1/q}_q(\Gamma))}
+ \|\pd_t(h_1-h_2)\|_{L_\infty((0, T), W^{1-1/q}_q(\Gamma))}).
\end{aligned}\end{equation}
Setting
$\CH^{32}_1=\bV^2_\bh(\cdot, \bar\nabla \CE_2[H_{h_2}])\bar\nabla\CE_2[H_{h_2}]\otimes \CH^2_1$, 
by Lemma \ref{lem:4.5.3}, \eqref{est:2}, and \eqref{eq:5.11}, we have
\begin{align*}
&\|e^{-\gamma t}\CH^{32}_1\|_{H^{1/2}_p(\BR, L_q(\dot\Omega))}
+ \|e^{-\gamma t}\CH^{32}_1\|_{L_p(\BR, H^1_q(\dot\Omega))}  \leq CL(L+B)T^{1/2}
E^1_T(\bG_1-\bG_2),
\end{align*}
where we have used $1/p+1/p' = 1$, 
which, combined with \eqref{eq:5.15*}, yields that 
\begin{equation}\label{eq:5.15}\begin{aligned}
&\|e^{-\gamma t}\CH^3_1\|_{H^{1/2}_p(\BR, L_q(\dot\Omega))}
+ \|e^{-\gamma t}\CH^3_1\|_{L_p(\BR, H^1_q(\dot\Omega))} \leq CL(L+B)^2T^{1/p'}
E^1_T(\bG_1-\bG_2),
\end{aligned}\end{equation}

Since 
\begin{align*}
&\|\bar\nabla^2\CE_1[H_{h_1}]\|_{H^{1/2}_p(\BR, L_q(\dot\Omega))}
+ \|\bar\nabla^2\CE_1[H_{h_1}]\|_{L_p(\BR, H^1_q(\dot\Omega))} \\
&\quad \leq C(\|h_1\|_{H^1_p((0, T), W^{2-1/q}_q(\Gamma))} 
+ \|h_1\|_{L_p((0, T), W^{3-1/q}_q(\Gamma))} \\
&\qquad + \|T_h(\cdot)h_0\|_{H^1_p((0, \infty), H^2_q(\dot\Omega))}
+ \|T_h(\cdot)h_0\|_{L_p((0, \infty), H^3_q(\ddot\Omega))} \\
&\quad \leq C(L+\epsilon) \leq 2CL; \\
&\|e^{-\gamma t}\bar\nabla^2(\CE_1[H_{h_1}]-\CE_2[H_{h_2}])
\|_{H^{1/2}_p(\BR, L_q(\dot\Omega))}
+ \|e^{-\gamma t}\bar\nabla^2(\CE_1[H_{h_1}]-\CE_2[H_{h_2}])
\|_{L_p(\BR, H^1_q(\dot\Omega))} \\
&\quad \leq C(\|h_1-h_2\|_{H^1_p((0, T), W^{2-1/q}_q(\Gamma))} 
+ \|h_1-h_2\|_{L_p((0, T), W^{3-1/q}_q(\Gamma))}),
\end{align*}
by Lemma \ref{lem:4.5.3}, \eqref{est:2}, \eqref{eq:5.12}, 
and \eqref{eq:5.13}, we have 
\begin{align*}
&\|e^{-\gamma t}\CH^4_1\|_{H^{1/2}_p(\BR, L_q(\dot\Omega))}
+ \|e^{-\gamma t}\CH^4_1\|_{L_p(\BR, H^1_q(\dot\Omega))} \\
&\quad 
\leq CL^2T^{1/(2p')}(E^2_T(h_1-h_2) + \|\pd_t(h_1-h_2)\|_{L_\infty((0, T), W^{1-1/q}_q(\Gamma))}),
\end{align*}
which, combined with \eqref{eq:5.8}, \eqref{eq:5.11}, 
and \eqref{eq:5.15}, yields that 
\begin{equation}\label{eq:5.17}\begin{aligned}
&\|e^{-\gamma t}\tilde \CH_1\|_{H^{1/2}_p(\BR, L_q(\dot\Omega))}
+ \|e^{-\gamma t}\tilde\CH_1\|_{L_p(\BR, H^1_q(\dot\Omega))}\\
&\quad \leq CL(B+L)^2T^{1/(2p)}
E_T(\bu_1- \bu_2, \bG_1-\bG_2, h_1-h_2),
\end{aligned}\end{equation}
where we have used the fact that $1/p < 1/p'$. 

We now consider $\fg$ and  $\CG$.  In view of  \eqref{4.5.32}, we set
\begin{align*}
\tilde g & = \CG_1(\bar\nabla\CE_2[H_{h_1}])\bar\nabla\CE_2[H_{h_1}]\otimes
\nabla\CE_1[\bu_1] 
-
\CG_1(\bar\nabla\CE_2[H_{h_2}])\bar\nabla\CE_2[H_{h_2}]\otimes
\nabla\CE_1[\bu_2], \\
\tilde \CG & = \CG_2(\bar\nabla\CE_2[H_{h_1}])\bar\nabla \CE_2[H_{h_1}]\otimes
\CE_1[\bu_1] -
\CG_2(\bar\nabla\CE_2[H_{h_2}])\bar\nabla \CE_2[H_{h_2}]\otimes
\CE_1[\bu_2].
\end{align*}
And then, $\tilde\fg$ and $\tilde \CG$ are defined for $t \in \BR$ and 
$\fg = \tilde \fg$ and $\CG=\tilde\CG$ for $t \in (0, T)$.  
Employing the same argument as in proving \eqref{eq:5.8}, we have
\begin{equation}\label{eq:5.20}\begin{aligned}
&\|e^{-\gamma t}\tilde\fg\|_{H^{1/2}_p(\BR, L_q(\dot\Omega))}
+ \|e^{-\gamma t}\tilde \fg\|_{L_p(\BR, H^1_q(\dot\Omega))} \\
&\quad \leq CL(B+L)T^{1/(2p')}
E_T(\bu_1- \bu_2, \bG_1-\bG_2, h_1-h_2). 
\end{aligned}\end{equation}

To estimate $\tilde\CG$, we write $\tilde \CG = G_1 + G_2$ with
\begin{align*}
G_1 & = (\tilde \CG_2(\bar\nabla\CE_2[H_{h_1}[) - \tilde \CG_2[\bar\nabla
\CE_2[H_{h_2}]))\otimes \CE_1[\bu_1], \\
G_2 & = \CG_2(\bar\nabla\CE_2[H_{h_2}])\bar\nabla\CE_2[H_{h_2}]
\otimes(\CE_1[\bu_1]-\CE_2[\bu_2]),
\end{align*}
where we have set $\tilde \CG_2(\bK) = \tilde \CG_2(\bK)\bK$.  
To estimate $\pd_tG_1$, we write
\begin{align*}
\pd_tG_1 & = \int^1_0(d_\bK\tilde\CG_2)(\bar\nabla 
\CE_2[H_{h_2}] + \theta\bar\nabla(\CE_2[H_{h_1}]-\CE_2[H_{h_2}])]\,d\theta
\bar\nabla(\CE_2[H_{h_1}]-\CE_2[H_{h_2}])\otimes \pd_t\CE_1[\bu_1]\\
&+\Bigl(\int^1_0(d_\bK\tilde\CG_2)(\bar\nabla 
\CE_2[H_{h_2}] + \theta\bar\nabla(\CE_2[H_{h_1}]-\CE_2[H_{h_2}])]\,d\theta
\bar\nabla\pd_t(\CE_2[H_{h_1}]-\CE_2[H_{h_2}])\Bigr)\otimes \CE_1[\bu_1]\\
& +\Bigl(\int^1_0(d^2_\bK\tilde\CG_2)(\bar\nabla 
\CE_2[H_{h_2}] + \theta\bar\nabla(\CE_2[H_{h_1}]-\CE_2[H_{h_2}])]
\pd_t((1-\theta)\bar\nabla 
\CE_2[H_{h_2}] + \theta\bar\nabla\CE_2[H_{h_1}])\,d\theta\Bigr)\\
&\hskip6cm
\otimes\bar\nabla(\CE_2[H_{h_1}]-\CE_2[H_{h_2}])\otimes \CE_1[\bu_1].
\end{align*}
By \eqref{4.5.34}, \eqref{4.5.1}, \eqref{1.3.2}, \eqref{est:1},  
 and \eqref{diffh.1}, we have
\begin{align}
&\|e^{-\gamma t}\pd_t G_1\|_{L_p(\BR, L_q(\dot\Omega))} \nonumber \\
&\quad \leq C\{\|h_1-h_2\|_{L_\infty((0, T), W^{2-1/q}_q(\Gamma))}
\|\pd_t\CE_1[\bu_1]\|_{L_p(\BR, L_q(\dot\Omega))} \nonumber \\
&\qquad +T^{1/p}\|\pd_t(h_1-h_2)\|_{L_\infty((0, T), W^{1-1/q}_q(\Gamma)}
\|\CE_1[\bu_1]\|_{L_\infty((0, T), H^1_q(\dot\Omega))}\nonumber \\
&\qquad +T^{1/p}(\|\pd_t\CE_2[H_{h_1}]\|_{L_\infty(\BR, W^{1-1/q}_q(\Gamma))}
+ \|\pd_t\CE_2[H_{h_2}]\|_{L_\infty(\BR, W^{1-1/q}_q(\Gamma))})\nonumber  \\
&\qquad\quad\times \|h_1-h_2\|_{L_\infty((0, T), W^{2-1/q}_q(\Gamma))}
\|\CE_1[\bu_1]\|_{L_\infty((0, T), H^1_q(\dot\Omega))}\nonumber \\
&\quad \leq C\{T^{1/p'}\|\pd_t(h_1-h_2)\|_{L_p((0, T), W^{2-1/q}_q(\Gamma))}(L+B)
\nonumber \\
&\qquad + TL\|\pd_t(h_1-h_2)\|_{L_p((0, T), W^{2-1/q}_q(\Gamma))}(L+B)
\nonumber \\
&\qquad + T^{1/p}\|\pd_t(h_1-h_2)\|_{L_\infty((0, T), W^{1-1/q}_q(\Gamma))}
(L+B)\}
\nonumber \\
&\quad \leq CT^{1/p}(L+B)(\|\pd_t(h_1-h_2)\|_{L_p((0, T), W^{2-1/q}_q(\Gamma))}
+ \|\pd_t(h_1-h_2)\|_{L_\infty((0, T), W^{1-1/q}_q(\Gamma))}),
\label{eq:5.21}
\end{align}
where we have used $T^{1/p'}L \leq 1$. 
  Since $\CE_1[\bu_1]- \CE_2[\bu_2]
=e_T[\bu_1-\bu_2]$, writing 
\begin{align*}
\pd_tG_2 &= \CG_2(\bar\nabla\CE_2[H_{h_2}])\bar\nabla \CE_2[H_{h_2}]\otimes\pd_te_T[\bu_1-\bu_2]
+ \CG_2(\bar\nabla\CE_2[H_{h_2}])\bar\nabla\pd_t\CE_2[H_{h_2}])
\otimes e_T[\bu_1-\bu_2]\\
&+ (d_\bK\CG_2)(\bar\nabla\CE_2[H_{h_2}])\pd_t\bar\nabla\CE_2[H_{h_2}]
\otimes\bar\nabla\CE_2[H_{h_2}]
\otimes e_T[\bu_1-\bu_2],
\end{align*}
by \eqref{4.5.34}, \eqref{4.5.34*}, \eqref{est:1}, and \eqref{est:6}
\begin{align*}
&\|e^{-\gamma t}\pd_tG_2\|_{L_p(\BR, L_q(\dot\Omega))} \\
&\quad\leq C LT^{1/p'}
\|\pd_t(\bu_1-\bu_2)\|_{L_p((0, T), L_q(\dot\Omega))}
+\|\pd_t\CE_2[H_{h_2}]\|_{L_p(\BR, W^{2-1/q}_q(\Gamma))}
\|\bu_1-\bu_2\|_{L_\infty((0, T), L_q(\dot\Omega))}\\
&\quad  \leq  C\{LT^{1/p'}\|\pd_t(\bu_1-\bu_2)\|_{L_p((0, T), L_q(\dot\Omega))}
+T^{\frac{s}{p'(1+s)}}LE^1_T(\bu_1-\bu_2)\}, 
\end{align*}
which, combined with \eqref{eq:5.21}, yields that
\begin{equation}\label{eq:5.22}
\|e^{-\gamma t}\pd_t\tilde \CG\|_{L_p(\BR, L_q(\dot\Omega))}
\leq C(L+B)T^{\frac{s}{p'(1+s)}}
E_T(\bu_1-\bu_2, \bG_1-\bG_2, h_1-h_2).
\end{equation}
Applying Theorem \ref{thm:4.2.1} to equations \eqref{eq:5.1}
and using \eqref{eq:5.3}, \eqref{eq:5.6}, \eqref{eq:5.17},
\eqref{eq:5.20}, and \eqref{eq:5.22}, we have
\begin{equation}\label{eq:5.23}
E_T(\bv_1-\bv_2) + E^2_T(\rho_1-\rho_2)
\leq C(1+\gamma_1^{1/2})e^{\gamma_1 }L^3T^{\frac{s}{p'(1+s)}}E_T(\bu_1-\bu_2, \bG_1-
\bG_2, h_1-h_2),
\end{equation}
provided that $LT^{1/p'} \leq 1$, $0 < T = \kappa=\epsilon < 1$, and 
$L > B \geq 1$.

Moreover, by the third equation of \eqref{eq:5.1}, 
\eqref{eq:5.5}, and \eqref{4.4.3}, we have
\begin{align*}
\|\pd_t(\rho_1-\rho_2)\|_{L_\infty((0, T), W^{1-1/q}_q(\Gamma))}
&\leq C(B\|\rho_1-\rho_2\|_{L_\infty((0, T), W^{2-1/q}_q(\Gamma))}
+ \|\bv_1-\bv_2\|_{L_\infty((0, T), H^1_q(\dot\Omega))} \\
&\quad +T^{1/p'}(L+B)E_T(\bu_1-\bu_2, \bG_1-\bG_2, h_1-h_2))\\
&\leq C(BT^{1/p'}\|\pd_t(\rho_1-\rho_2)\|_{L_p((0, T), W^{2-1/q}_q(\Gamma))}
+ \|\bv_1-\bv_2\|_{L_\infty((0, T), H^1_q(\dot\Omega))} \\
&\quad +T^{1/p'}(L+B)E_T(\bu_1-\bu_2, \bG_1-\bG_2, h_1-h_2))
\end{align*}
which, combined with \eqref{eq:5.23} and $BT^{1/p'} \leq 1$, yields that
\begin{equation}\label{eq:5.24}\begin{aligned}
&E^1_T(\bv_1-\bv_2) + E^2_T(\rho_1-\rho_2) 
+ \|\pd_t(\rho_1-\rho_2)\|_{L_\infty((0, T), W^{1-1/q}_q(\Gamma))}
\\
&\quad
\leq C((1+\gamma_1^{1/2})e^{\gamma_1}L^3T^{\frac{s}{p'(1+s)}}
+ (L+B)T^{1/p'})E_T(\bu_1-\bu_2, \bG_1-\bG_2,
h_1-h_2)\\
&\quad \leq M_2L^3T^{\frac{s}{p'(1+s)}}E_T(\bu_1-\bu_2, \bG_1-\bG_2,
h_1-h_2)
\end{aligned}\end{equation}
with some constant $M_2$ depending on $s \in (0, 1-2/p)$ and $\gamma_1 > 0$ 
provided that $LT^{1/p'} \leq 1$, $1 \leq B \leq L$, and  $0 < T=\kappa=\epsilon < 1$. 

We now consider  $\tilde \bH = \bH_1- \bH_2$. . We first consider $\CF_2$.  
In view of  \eqref{maineq:2*},  we may write 
$$\bff_2(\bu, \bG, H_\rho) = \bV^2_\bh(\bar\nabla H_\rho)\bff_4(\bu, \bG, H_\rho)$$
with
$$\bff_4(\bu, \bG, H_\rho) = (\nabla\bG\otimes\pd_t H_\rho + 
\bar\nabla H_\rho\otimes\nabla^2\bG + \bar\nabla^2 H_\rho\otimes \nabla\bG
+ \nabla\bu\otimes\bG + \bu\otimes\nabla\bG,
$$
where $\bV^2_\bff(\bK)$ is some matrix of smooth funtions of $\bK$ for 
$|\bK| < \delta$. And then, employing the same argument as in proving \eqref{eq:5.3}, 
we have
\begin{equation}\label{eq:5.25}
\|\CF_2\|_{L_p((0, T), L_q(\dot\Omega))}
\leq CT^{1/p}(L+B)E_T(\bu_1-\bu_2, \bG_1-\bG_2,
h_1-h_2)
\end{equation}
provided that $T^{1/p'}L \leq 1$, $1 < B \leq L$, and $0 < T=\epsilon=\kappa < 1$.

Concerning $\CH_2$ and  $\CH_3$, in view of \eqref{eq:4.5.8}, 
we define $\tilde \CH_2$ and $\tilde \CH_3$ by setting 
$\tilde \CH_2 = \CB_1 + b(y)\CB_2 + \CB_3$ with 
\begin{align*}
\CB_1 & = (\tilde \bV^3_\bh(\cdot, \bar\nabla\CE_2[H_{\rho_1}]) - 
 \tilde \bV^3_\bh(\cdot, \bar\nabla\CE_2[H_{\rho_2}]))\otimes\nabla \CE_1[tr[\bu_1]]\\
&+ \bV^3_\bh(\cdot, \bar\nabla\CE_2[H_{\rho_2}])\bar\nabla\CE_{\rho_2}[H_{\rho_2}]
\otimes \nabla(\CE_1[tr[\bu_1]]-\CE_1[tr[\bu_2]]); \\
\CB_2 & = (\CE_1[tr[\bu_1]]-\CE_1[tr[\bu_2]])\otimes\CE_1[tr[\bG_1]]
+ \CE_1[tr[\bu_2]]\otimes(\CE_1[tr[\bG_1]]-\CE_1[tr[\bG_2]]); \\
\CB_3 & = (\tilde\bV^4_\bh(\cdot, \bar\nabla\CE_2[H_{\rho_1}]) - 
\tilde\bV^4_\bh(\cdot, \bar\nabla\CE_2[H_{\rho_2}]))\CE_1[tr[\bu_1]]\otimes
\CE_1[tr[\bG_1]] \\
& + \bV^4_\bh(\cdot, \bar\nabla\CE_2[H_{\rho_2}])
\bar\nabla\CE_2[H_{\rho_2}]\otimes \CB_2,
\end{align*}
where we have set 
$$\tilde\bV^3_\bh(\cdot, \bK) = \bV^3_\bh(\cdot, \bK)\bK, \quad
\tilde\bV^4_\bh(\cdot, \bK) = \bV^4_\bh(\cdot, \bK)\bK;
$$
and 
\begin{align*}\tilde \CH_3 & =-\mu\sum_{j,k=1}^N(\tilde V_{0jk}(\nabla \CE_2[H_{\rho_1}]) - 
\tilde V_{0jk}(\nabla\CE_2[h_{\rho_2}]))
\frac{\pd}{\pd y_k}\CE_1[tr[\bu_1]]_j \\
& - \sum_{j,k=1}^N V_{0jk}(\nabla\CE_2[H_{\rho_2}])\nabla\CE_2[H_{\rho_2}]
\frac{\pd}{\pd y_k}(\CE_1[tr[\bu_1]]_j - 
\CE_1[tr[\bu_2]]_j).
\end{align*}
Obviously, $\tilde\CH_i$  are defined for $t \in \BR$, 
and  $\tilde \CH_i = \CH_i$ for $t \in (0, T)$ for $i=3,4$.  Employing the same 
argument as in proving \eqref{eq:5.8} and \eqref{eq:5.17}, we have
\begin{equation}\label{eq:5.26}\begin{aligned}
&\|e^{-\gamma t}\tilde\CH_3\|_{H^{1/2}_p(\BR, L_q(\dot\Omega))} + 
\|e^{-\gamma t}\tilde \CH_3\|_{L_p(\BR, H^1_q(\dot\Omega))}\\
&\quad 
\leq CL(B+L)T^{1/(2p')}E_T(\bu_1-\bu_2, \bG_1-\bG_2, h_1-h_2); \\
&\|e^{-\gamma t}\tilde\CH_2\|_{H^{1/2}_p(\BR, L_q(\dot\Omega))}
+ \|e^{-\gamma t}\tilde\CH_2\|_{L_p(\BR, H^1_q(\dot\Omega))}\\
&\quad 
\leq CL(B+L)^2T^{1/(2p)}
E_T(\bu_1-\bu_2, \bG_1-\bG_2, h_1-h_2)
\end{aligned}\end{equation}
provided that $T^{1/p'}L \leq 1$ and $0 < \epsilon = T < 1$, $1 \leq L$, and $1 \leq B$. . 

We finally consider $\CK_1$ and $\CK_2$.  As was mentioned in \eqref{4.5.34}, 
we may assume that 
$$\sup_{t\in\BR}\|\CE_2[H_{\rho_i}]\|_{H^1_\infty(\Omega)} \leq \delta
\quad(i=1,2).
$$
In view of \eqref{eq:4.5.9}, we set
$\tilde\CK = \tilde \CK_1 + \tilde \CK_2$ with
\begin{align*}
\tilde \CK_1 & = (\tilde\bV^5_\bk(\cdot\bar\nabla \CE_2[H_{\rho_1}])- 
\tilde\bV^5_\bk(\cdot\bar\nabla \CE_2[H_{\rho_2}]))\otimes
\CE_1[tr[\bG_1]] \\
\tilde \CK_2 & = \bV^5_\bk(\cdot, \bar\nabla\CE_2[H_{\rho_2}])\bar\nabla\CE_2[H_{\rho_2}]
\otimes(\CE_1[tr[\bG_1]]-\CE_2[tr[\bG_2]]),
\end{align*}
where we have set $\tilde\bV^5_\bk(\cdot, \bK) = \bV^5_\bk(\cdot, \bK)\bK$.  
Obviously, $\tilde \CK$ is defined for $t \in \BR$ and 
$\tilde \CK = (\CK_1, \CK_2)$ for $t \in (0, T)$.  To estimate $\tilde \CK_1$, 
we write 
\begin{align*}
&\tilde\bV^5_\bk(\cdot, \bar\nabla \CE_2[H_{\rho_1}]) - 
\tilde\bV^5_\bk(\cdot, \bar\nabla \CE_2[H_{\rho_2}]) \\
&\quad = \int^1_0(d_\bK \tilde\bV^5_\bk)(\cdot, \bar\nabla\CE_2[H_{\rho_2}]
+ \theta \bar\nabla(\CE_2[H_{\rho_1}]- \CE_2[H_{\rho_2}]))\,d\theta
\bar\nabla e_T[H_{\rho_1}- H_{\rho_2}], 
\end{align*}
and then by \eqref{eq:4.5.1} we have 
\begin{align*}
&\|\tilde\CK_1\|_{H^2_q(\dot\Omega)} \\ 
&\quad\leq  C\{\|\CE_1[tr[\bG_1]]\|_{H^2_q(\dot\Omega)}
\|\bar\nabla e_T[H_{\rho_1}-H_{\rho_2}]\|_{H^1_q(\dot\Omega)}
+ \|\CE_1[tr[\bG_1]]\|_{H^1_q(\dot\Omega)}\|\bar\nabla
e_T[H_{\rho_1}-H_{\rho_2}]\|_{H^2_q(\Omega)} \\
&\quad +(\|\bar\nabla\CE_2[H_{\rho_1}]\|_{H^2_q(\Omega)} 
+ \|\bar\nabla\CE_2[H_{\rho_2}]\|_{H^2_q(\Omega)})
(1 + \|\bar\nabla\CE_2[H_{\rho_1}]\|_{H^1_q(\Omega)} 
+ \|\bar\nabla\CE_2[H_{\rho_2}]\|_{H^1_q(\Omega)})\\
&\hskip7cm\times\|\bar\nabla e_T[H_{\rho_1}-H_{\rho_2}]\|_{|H^1_q(\Omega)}
\|\CE_1[tr[\bG_1]]\|_{H^1_q(\dot\Omega)}\}.
\end{align*}
Noting that $e_T[H_{\rho_1}-H_{\rho_2})$ vanishes for $t \not\in (0, 2T)$,
we have
$$\|\bar\nabla(H_{\rho_1}-H_{\rho_2})\|_{L_\infty(\BR, H^1_q(\Omega)}
\leq CT^{1/p'}\|\pd_t(\rho_1-\rho_2)\|_{L_p((0, T), W^{2-1/q}_q(\Gamma))}.
$$
Thus, by \eqref{est:1} and \eqref{4.10***}, 
\begin{align*}
\|e^{-\gamma t}\tilde\CK_1\|_{L_p(\BR, H^2_q(\dot\Omega))} 
&\leq  C\{T^{1/p'}(L+e^{2(\gamma-\gamma_1)}B)\|\pd_t(\rho_1-\rho_2)\|_{L_p((0, T), W^{2-1/q}_q(\Gamma))}\\
&\quad + (Be^{2(\gamma-\gamma_1)} + (L+B)T^{\frac{s}{p'(1+s)}})\|\rho_1-\rho_2\|_{L_p((0, T), W^{3-1/q}_q(\Gamma))}
\\
&\quad + L(L+B)T^{1/p'}\|\pd_t(\rho_1-\rho_2)\|_{L_p((0, T), W^{2-1/q}_q(\Gamma))}\}\\
&\leq C(e^{2(\gamma-\gamma_1)}B + L(L+B)T^{\frac{s}{p'(1+s)}})E^2_T(\rho_1-\rho_2).
\end{align*}
Using \eqref{eq:4.5.1}, we have
\begin{align*}
&\|\tilde \CK_2\|_{H^2_q(\dot\Omega)} \\
&\quad \leq C\{\|\bar\nabla\CE_2[H_{\rho_2}]\|_{H^2_q(\Omega)}
\|e_T[tr[\bG_1]-tr[\bG_2]]\|_{H^1_q(\dot\Omega)}
+ \|\bar\nabla\CE_2[H_{\rho_2}]\|_{H^1_q(\Omega)}
\|e_T[tr[\bG_1]-tr[\bG_2]]\|_{H^2_q(\dot\Omega)} \\
&\quad +\|\bar\nabla\CE_2[H_{\rho_2}]\|_{H^2_q(\Omega)}
(1 + \|\bar\nabla\CE_2[H_{\rho_2}]\|_{H^1_q(\Omega)})
\|\bar\nabla\CE_2[H_{\rho_2}]\|_{H^1_q(\Omega)}
\|e_T[tr[\bG_1]-tr[\bG_2]]\|_{H^1_q(\dot\Omega)}. 
\end{align*}
Employing the same argument as in  \eqref{eq:5.8*}, we have
$$\|tr[\bG_1]-tr[\bG_2]\|_{L_\infty((0, T), H^1_q(\dot\Omega))}
\leq CT^{\frac{s}{p'(1+s)}}E^1_T(\bG_1-\bG_2)
$$
for some $s \in (0, 1-2/p)$, and so we have
\begin{align*}
\|e^{-\gamma t}\tilde\CK_2\|_{L_p(\BR, H^2_q(\dot\Omega))}
& \leq C\{LT^{\frac{s}{p'(1+s)}}E^1_T(\bG_1-\bG_2) 
+ LT^{1/p'}\|\bG_1-\bG_2\|_{L_p((0, T), H^2_q(\dot\Omega))}\}\\
&\leq CLT^{\frac{s}{p'(1+s)}}E^1_T(\bG_1-\bG_2).
\end{align*}

By \eqref{4.5.1} and \eqref{eq:4.5.1}, we have
\begin{align*}
&\|\pd_t\tilde \CK_1\|_{L_q(\dot\Omega)} \\
& \leq C\{(\|\pd_t\bar\nabla\CE_2[H_{\rho_1}]\|_{L_q(\dot\Omega)}
+ \|\pd_t\bar\nabla\CE_2[H_{\rho_2}]\|_{L_q(\dot\Omega)})
\|\bar\nabla e_T[H_{\rho_1}-H_{\rho_2}]\|_{H^1_q(\dot\Omega)}
\|\CE_1[tr[\bG_1]]\|_{H^1_q(\dot\Omega)}\\
&+\|\pd_t\bar\nabla e_T[H_{\rho_1}-H_{\rho_2}]\|_{L_q(\dot\Omega)}
\|\CE_1[tr[\bG_1]]\|_{H^1_q(\dot\Omega)}
+ \|\bar\nabla e_T[H_{\rho_1}-H_{\rho_2}]\|_{H^1_q(\dot\Omega)}
\|\pd_t\CE_1[tr[\bG_1]]\|_{L_q(\dot\Omega)};
\\
&\|\pd_t\tilde \CK_2\|_{L_q(\dot\Omega)} 
\leq C\{\|\pd_t\bar\nabla \CE_2[H_{\rho_2}]\|_{L_q(\dot\Omega)}
\|\bar\nabla \CE_2[H_{\rho_2}]\|_{H^1_q(\dot\Omega)}
\|e_T[tr[\bG_1]-tr[\bG_2]]\|_{H^1_q(\dot\Omega)}\\
&+\|\pd_t\bar\nabla\CE_2[H_{\rho_2}]\|_{L_q(\dot\Omega)}
\|e_T[tr[\bG_1]-tr[\bG_2]]\|_{H^1_q(\dot\Omega)}
+ \|\bar\nabla\CE_2[H_{\rho_2}]\|_{H^1_q(\dot\Omega)}
\|\pd_t e_T[tr[\bG_1]-tr[\bG_2]]\|_{L_q(\dot\Omega)}
\end{align*}
Thus, we have
\begin{align*}
&\|e^{-\gamma t}\pd_t\tilde \CK_1\|_{L_p(\BR, L_q(\dot\Omega))}
+ \|e^{-\gamma t}\pd_t\tilde \CK_2\|_{L_p(\BR, L_q(\dot\Omega))}\\
&\leq C\{TL\|\pd_t(\rho_1-\rho_2)\|_{L_p((0, T), W^{2-1/q}_q(\Gamma))}(B+L)
+ T^{1/p}\|\pd_t(\rho_1-\rho_2)\|_{L_\infty((0, T), W^{1-1/q}_q(\Gamma))}(B+L)\\
&+T^{1/p'}\|\pd_t(\rho_1-\rho_2)\|_{L_p((0, T), W^{2-1/q}_q(\Gamma))}(B+L)
+ T^{1/p}L
T^{\frac{s}{p'(1+s)}}E^1_T(\bG_1-\bG_2)\\
&+ T^{1/p}LT^{\frac{s}{p'(1+s)}}E^1_T(\bG_1-\bG_2)
+  LT^{1/p'}\|\pd_t(\bG_1-\bG_2)\|_{L_p((0, T), L_q(\dot\Omega))}\}\\
& \leq CT^{1/p}(L+B)(\tilde E^2_T(\rho_1-\rho_2) + E^2_T(\bG_1-\bG_2)),
\end{align*}
where we have set $\tilde E^2_T(\rho_1-\rho_2) = E^2_T(\rho_1-\rho_2) 
+ \|\pd_t(\rho_1-\rho_2)\|_{L_\infty((0, T), W^{1-1/q}_q(\Gamma))}$.
Putting these inequalities together gives that
\begin{equation}\label{eq:5.27}\begin{aligned}
\|e^{-\gamma t}\tilde \CK\|_{L_p(\BR, H^2_q(\dot\Omega))}
+ \|e^{-\gamma t}\pd_t \tilde \CK\|_{L_p(\BR, L_q(\dot\Omega))} 
&\leq C(e^{2(\gamma-\gamma_1)}B + L(L+B)T^{\frac{s}{p'(1+s)}})\tilde E^2_T(\rho_1-\rho_2) \\
&\quad 
+ C(L+B)T^{T^{\min(\frac{1}{p'}, \frac1p+\frac{s}{p'(1+s)})}}E^1_T(\bG_1-\bG_2).
\end{aligned}\end{equation}
Applying Theorem \ref{thm:4.2.1} to equations \eqref{eq:5.2} and using
\eqref{eq:5.25}, \eqref{eq:5.26}, and \eqref{eq:5.27}, 
give that
\begin{equation}\label{eq:5.28}\begin{aligned}
&\|e^{-\gamma t}(\bH_1- \bH_2)\|_{L_p((0, T), H^2_q(\dot\Omega))}
+ \|e^{-\gamma t}\pd_t(\bH_1-\bH_2)\|_{L_p((0, T), L_q(\dot\Omega))} \\
&\quad\leq Ce^{\gamma_1}\{(B+L(L+B)T^{\frac{s}{p'(1+s)}})\tilde E^2_T(\rho_1-\rho_2)
+ L(B+L)^2T^{1/(2p)}E_T(\bu_1-\bu_2, \bG_1-\bG_2, h_1-h_2)\}.
\end{aligned}\end{equation}
Combining \eqref{eq:5.24} and \eqref{eq:5.28} yields that
$$E_T(\bv_1-\bv_2, \bH_1-\bH_2, \rho_1-\rho_2) \leq \CN_T(L,B)
E_T(\bu_1-\bu_2, \bG_1-\bG_2, h_1-h_2)
$$
with
$$\CN_T(L,B)=(Ce^{\gamma_1}(B+L(L+B)T^{\frac{s}{p'(1+s)}})+1)M_2L^3T^{\frac{s}{p'(1+s)}}
+ Ce^{\gamma_1}L(B+L)^2T^{1/(2p)}).$$  
Thus, choosing $T$ so small that 
$\CN_T(L,B) \leq 1/2$, 
we see that the $\Phi$ is a contraction map
from $U_T$ into itself, and so there is a unique fixed point
$(\bu, \bG, h) \in U_T$ of the map $\Phi$.  This $(\bu, \bG, h)$
solves equations \eqref{mhd.3} uniquely and possessing the 
properties mentioned in Theorem \ref{thm:main}.  This 
completes the proof of Theorem \ref{thm:main}. 

\end{document}